\documentclass[11pt,reqno]{amsart}

\usepackage{amssymb,amsmath,amsthm,amsfonts}
\usepackage{hyperref, multirow, bm, color, marginnote, graphicx, wrapfig, subcaption, esint, stmaryrd}
\usepackage{diagbox}

\makeatletter
\renewcommand{\tocsection}[3]{%
  \indentlabel{\@ifnotempty{#2}{\bfseries\ignorespaces#1 #2\quad}}\bfseries#3}
\renewcommand{\tocsubsection}[3]{%
  \indentlabel{\@ifnotempty{#2}{\ignorespaces#1 #2\quad}}#3}

\newcommand\@dotsep{4.5}
\def\@tocline#1#2#3#4#5#6#7{\relax
  \ifnum #1>\c@tocdepth 
  \else
    \par \addpenalty\@secpenalty\addvspace{#2}%
    \begingroup \hyphenpenalty\@M
    \@ifempty{#4}{%
      \@tempdima\csname r@tocindent\number#1\endcsname\relax
    }{%
      \@tempdima#4\relax
    }%
    \parindent\z@ \leftskip#3\relax \advance\leftskip\@tempdima\relax
    \rightskip\@pnumwidth plus1em \parfillskip-\@pnumwidth
    #5\leavevmode\hskip-\@tempdima{#6}\nobreak
    \leaders\hbox{$\m@th\mkern \@dotsep mu\hbox{.}\mkern \@dotsep mu$}\hfill
    \nobreak
    \hbox to\@pnumwidth{\@tocpagenum{\ifnum#1=1\bfseries\fi#7}}\par
    \nobreak
    \endgroup
  \fi}
\AtBeginDocument{%
\expandafter\renewcommand\csname r@tocindent0\endcsname{0pt}
}
\def\l@subsection{\@tocline{2}{0pt}{2.5pc}{5pc}{}}
\makeatother

\usepackage[margin=1in]{geometry}

\newcommand{\R}{\mathbb{R}}
\newcommand{\Z}{\mathbb{Z}}

\newcommand{\T}{\mathbb{T}}
\renewcommand{\P}{\mathbb{P}}

\newcommand{\bu}{\bm{u}}

\newcommand{\bx}{\bm{x}}

\newcommand{\X}{\bm{X}}
\newcommand{\Y}{\bm{Y}}
\newcommand{\be}{\bm{e}}

\newcommand{\bv}{\bm{v}}
\newcommand{\p}{\partial}
\renewcommand{\div}{{\rm{div}\,}}

\newcommand{\abs}[1]{\left\lvert #1 \right\rvert}
\newcommand{\norm}[1]{\left\lVert #1 \right\rVert}
\newcommand{\floor}[1]{\left\lfloor #1 \right\rfloor}

\newcommand{\wh}[1]{\widehat{#1}}

\newcommand{\mc}[1]{\mathcal{#1}}

\numberwithin{equation}{section}
\newtheorem{theorem}{Theorem}[section]
\newtheorem{lemma}[theorem]{Lemma}
\newtheorem{proposition}[theorem]{Proposition}
\newtheorem{corollary}[theorem]{Corollary}
\newtheorem{definition}[theorem]{Definition}
\theoremstyle{definition}

\allowdisplaybreaks

\begin{document}
\title[A nonlocal curve evolution for an immersed elastic filament]{A nonlocal curve evolution for an immersed elastic filament: global existence and convergence to resistive force theory}

\author{Laurel Ohm}
\address{Department of Mathematics, University of Wisconsin - Madison, Madison, WI 53706}
\email{lohm2@wisc.edu}

\begin{abstract} 
We consider a nonlocal curve evolution belonging to a hierarchy of models for the dynamics of an inextensible elastic filament in a 3D Stokes fluid. This model captures the principal part of a full free boundary problem for an elastic filament in Stokes flow. The fluid effects on the filament evolution are encoded in a pseudodifferential force-to-velocity operator which may be regarded as an interpolation between resistive force theory at low wavenumbers and a Stokes boundary value problem at high wavenumbers. Here the curve is considered to be the centerline of a 3D filament with constant cross sectional radius $\epsilon>0$. We show global well-posedness for the curve evolution in the natural energy space. This loosely suggests that the full evolution may be globally well-posed if the large-scale geometry is controlled. Furthermore, we prove convergence to resistive force theory dynamics as $\epsilon\to 0$, which illustrates how resistive force theory emerges from more detailed models.
\end{abstract}

\maketitle

\tableofcontents

\setlength{\parskip}{4pt}
\section{Introduction}
We consider a nonlocal curve evolution approximating the motion of an inextensible elastic filament immersed in a Stokes fluid in $\R^3$. 
Our model belongs to a family of curve evolution equations arising in the following way. The thin filament is treated as a rod with rigid cross sections of constant radius $0<\epsilon\ll1$ whose centerline $\X(s,t):\T\times[0,T]\to \R^3$ for $\T:=\R/\Z$ deforms subject to a 1D elasticity law (Figure \ref{fig:filament}). Here the elastic force density
\begin{equation}
  \bm{f}(s) = (\X_{sss}-\tau\X_s)_s\,, \qquad (\cdot)_s:= \frac{\p\cdot}{\p s}\,,
\end{equation}
along the filament centerline comes from Euler-Bernoulli beam theory \cite{camalet2000generic, camalet1999self, hines1978bend, tornberg2004simulating, wiggins1998flexive, wiggins1998trapping}, with the tension $\tau(s,t)$ serving as a Lagrange multiplier to enforce the local inextensibility constraint $\abs{\X_s}^2=1$. The elastic forcing is coupled to the filament motion via a force-to-velocity map 
\begin{equation}
  \mc{L}_\epsilon(\X) : \, \bm{f}(s) \mapsto \bv(s)
\end{equation}
which incorporates the effects of the surrounding Stokes fluid. Models in this family are essentially distinguished by the level of detail with which the fluid effects are included in the map $\mc{L}_\epsilon(\X)$. The position of the curve then evolves according to an equation of the form
\begin{equation}\label{eq:evolution0}
\frac{\p\X}{\p t} = -\mc{L}_\epsilon(\X)\big[(\X_{sss}-\tau\X_s)_s\big] \,, \quad \abs{\X_s}^2=1\,.
\end{equation}

\begin{figure}[!ht]
\centering
\includegraphics[scale=0.3]{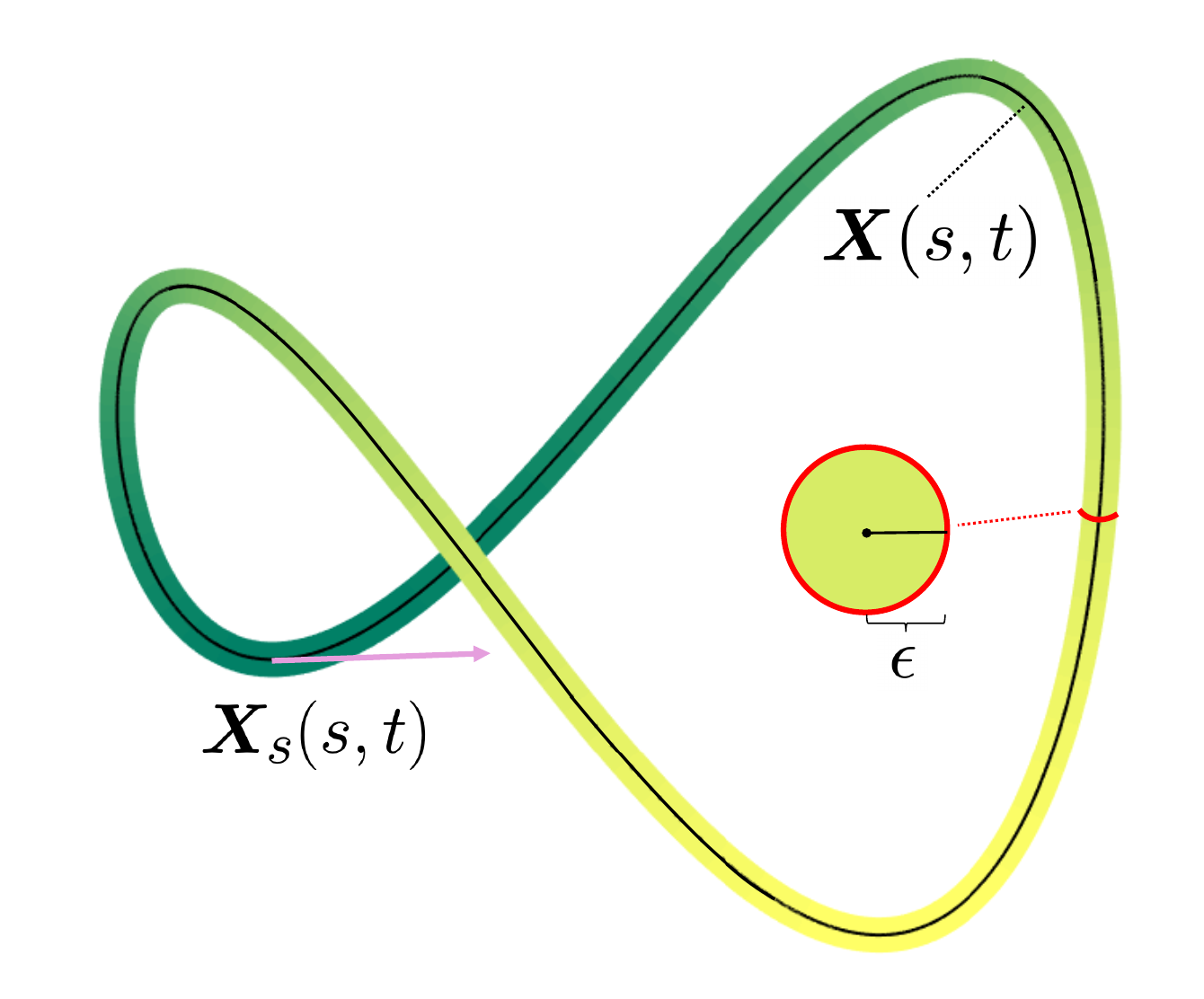}
\caption{We consider $\X(s,t)$ as the centerline of an immersed 3D filament with constant cross sectional radius $0<\epsilon\ll1$. }
\label{fig:filament}
\end{figure}

The simplest choice of force-to-velocity map which still incorporates meaningful hydrodynamic effects is the \emph{resistive force theory} (local slender body theory) \cite{gray1955propulsion, johnson1979flagellar, keller1976swimming, pironneau1974optimal} approximation
\begin{equation}\label{eq:LRFT}
  \mc{L}_{\epsilon,{\rm RFT}}(\X) = \frac{\abs{\log\epsilon}}{4\pi}({\bf I}+\X_s\otimes\X_s)\,.
\end{equation}
The map $\mc{L}_{\epsilon,{\rm RFT}}$ includes only the leading order $O(\abs{\log\epsilon})$ effect of the surrounding viscous fluid on the filament, resulting in a local force-to-velocity operator. The main feature of $\mc{L}_{\epsilon,{\rm RFT}}$ is a drag anisotropy weighting the viscous drag in the tangential ($\X_s$) direction along the filament twice as much as in the normal directions. This drag anisotropy is a simple but important nonlinear effect that, among other consequences, allows for net displacement (swimming) of the filament. Resistive force theory models of elastohydrodynamics are therefore useful for understanding undulatory swimming at low Reynolds number \cite{friedrich2010high, el2020optimal, gadelha2010nonlinear, gadelha2019flagellar, hu2022enhanced, montenegro2015spermatozoa, lauga2013shape, lauga2007floppy,  spagnolie2010optimal, lauga2009hydrodynamics, mori2023well,ohm2024well}. These local dynamics also serve as a useful testing ground for efficient numerical implementations of inextensibility \cite{moreau2018asymptotic,hall2019efficient,walker2020efficient}.

Resistive force theory is an important member of the immersed filament modeling hierarchy, but for certain applications, its treatment of hydrodynamic effects is overly simplistic. A natural, more detailed candidate for a force-to-velocity map $\mc{L}_\epsilon$ is nonlocal slender body theory\footnote{In nonlocal slender body theory, an approximation of the fluid velocity away from the filament centerline is obtained by distributing Stokeslets--the free-space Green's function for the Stokes equations in $\R^3$--and higher order Stokes doublet corrections along the filament centerline with density $\bm{f}(s)$. The expression for $\mc{L}_{\epsilon,\rm nloc}$ comes from evaluating this expression on the surface of the filament (${\rm dist}(\bx,\X)=\epsilon$) and expanding about $\epsilon=0$. } \cite{keller1976slender,johnson1980improved,tornberg2004simulating}:
\begin{equation}
\begin{aligned}
  \mc{L}_{\epsilon,\rm nloc}[\bm{f}](s) &= \frac{1}{8\pi}\big[({\bf I}-3\X_s\otimes\X_s)-2({\bf I}+\X_s\otimes\X_s)\log(\pi\epsilon/4) \big]\bm{f}(s)\\
  &\quad +\frac{1}{8\pi}\int_\T\bigg[\bigg(\frac{{\bf I}}{|\wh{\X}|}+ \frac{\wh{\X}\otimes\wh{\X}}{|\wh{\X}|^3}\bigg)\bm{f}(s')- \frac{{\bf I}+\X_s\otimes\X_s}{\abs{\sin(\pi(s-s'))/\pi}}\bm{f}(s)\bigg]\,ds'
\end{aligned}
\end{equation}
where $\wh{\X} = \X(s)-\X(s')$. We mention this operator due to its numerical utility; however, due to well-known issues \cite{gotz2000interactions,tornberg2004simulating,shelley2000stokesian,inverse} at high wavenumbers $\sim \frac{1}{\epsilon}$, an evolution of the form \eqref{eq:evolution0} using $\mc{L}_{\epsilon,\rm nloc}$ is ill-posed, even at analytic regularity.

We thus turn to arguably the most detailed choice of force-to-velocity map suitable for \eqref{eq:evolution0}: the \emph{slender body Neumann-to-Dirichlet (NtD) map}, introduced by the author together with Mori and Spirn in \cite{closed_loop,free_ends}. The slender body NtD map comes from solving the \emph{slender body boundary value problem} for 3D Stokes flow about a filament of radius $\epsilon>0$ using force data defined on a curve. The full definition of the slender body NtD map, which we will denote by $\mc{L}_{\epsilon,\rm NtD}$, appears in section \ref{subsec:SB_NtD}. The evolution \eqref{eq:evolution0} using $\mc{L}_{\epsilon,\rm NtD}$ gives rise to the slender body free boundary problem, a curve evolution in which the fluid surrounding the 3D filament exactly satisfies a Stokes boundary value problem at each moment in time. In \cite{ohm2025angle,ohm2024free}, we showed that the principal behavior of $\mc{L}_{\epsilon,\rm NtD}$ about a curved filament is captured by the slender body NtD map about a straight cylinder, for which we derived an explicit symbol in \cite{inverse}. From this decomposition, in \cite{ohm2025slender}, we showed that the slender body free boundary problem is locally well-posed in $C^{4,\alpha}(\T)$. Questions of global well-posedness and long-time convergence to equilibrium remain completely open.

This brings us to the curve evolution studied in this paper. Here we consider a natural and novel intermediate model of the form \eqref{eq:evolution0} between the full slender body free boundary problem \eqref{eq:SB_FBP} and the classical resistive force theory model \eqref{eq:RFT_FBP}. Our choice of the force-to-velocity map, which we will denote by $\overline{\mc{L}}_\epsilon$, is a pseudodifferential operator whose symbol in the tangential and normal directions to the filament centerline is the explicit Fourier multiplier corresponding to the exact slender body NtD map about a straight cylinder. In particular, we simply apply the straight map to a curved filament (see Definition \ref{def:Lepsbar}). At low wavenumbers, the map behaves like multiplication by $\abs{\log\epsilon}$, with the tangential direction along the filament weighted twice as much as the normal directions. At high wavenumbers $|k|\gtrsim \frac{1}{\epsilon}$, the map behaves like multiplication by $(\epsilon\abs{k})^{-1}$ on the Fourier side, i.e., a (scaled) inverse derivative.
In a sense, the force-to-velocity map $\overline{{\mc L}}_\epsilon$ interpolates between resistive force theory at low wavenumbers and the full slender body NtD map at high wavenumbers.

Our aim is twofold: 
(I.) We seek to probe the global well-posedness question for the full slender body free boundary problem \eqref{eq:SB_FBP} by replacing $\mc{L}_{\epsilon,\rm NtD}$ with its principal part. Here we prove global well-posedness for \eqref{eq:evolution0} using the map $\overline{\mc{L}}_\epsilon$ in the natural energy space. One caveat is that we treat \eqref{eq:evolution0} with $\overline{\mc{L}}_\epsilon$ as a pure curve evolution and do not worry about self-intersection of the filament. While self-intersection would lead to breakdown of the full free boundary problem, we can continue to make sense of the model with $\overline{\mc{L}}_\epsilon$ even if the filament passes through itself. Self-intersection is inherently a low-wavenumber phenomenon, and at low wavenumbers, $\overline{\mc{L}}_\epsilon$ looks like resistive force theory rather than $\mc{L}_{\epsilon,\rm NtD}$. However, our global well-posedness result for \eqref{eq:evolution0} using $\overline{\mc{L}}_\epsilon$ loosely suggests that if the large-scale geometry of the full slender body free boundary evolution remains under control, then the full free boundary problem should be globally well-posed.

(II.) In addition, we seek to illustrate how resistive force theory dynamics using $\mc{L}_{\epsilon,{\rm RFT}}$ emerge from more detailed models in the singular limit $\epsilon\to 0^+$. We prove that the evolution \eqref{eq:evolution0} with $\overline{\mc{L}}_\epsilon$ converges to the resistive force theory evolution with $\mc{L}_{\epsilon,{\rm RFT}}$ as the radius parameter $\epsilon\to 0$, thereby establishing the first rigorous dynamical connection between different levels of the curve evolution hierarchy. The convergence is at an expected but very slow logarithmic-in-$\epsilon$ rate, since \eqref{eq:LRFT} only captures the very leading order local effects of the Stokes fluid on the filament. The slow convergence does make the proof delicate, as there is no room for waste.

In \cite{albritton2025rods}, we recognize \eqref{eq:evolution0} with $\mc{L}_{\epsilon,{\rm RFT}}$ as a gradient flow of the filament bending energy $\frac{1}{2}\int_\T\abs{\X_{ss}}^2\,ds$, which allows us to show long-time convergence to 3D Euler elasticae, the critical points of the bending energy. The evolution with $\overline{\mc{L}}_\epsilon$ is also a gradient flow of the bending energy with respect to the metric 
\begin{equation}
  \langle \bm{V},\bm{U}\rangle := \langle \overline{\mc{L}}_\epsilon^{-1}\bm{V},\bm{U}\rangle_{L^2(\T)}
\end{equation}
defined on perturbations $\bm{V},\bm{U}$ of an inextensible curve $\X$. We may thus frame our convergence result as that of two different gradient flows for the same energy whose metrics converge as $\epsilon \to 0$.

\subsection{The force-to-velocity map}
We now define the force-to-velocity map $\overline{\mc{L}}_\epsilon$ used in \eqref{eq:evolution0}. Given $\bm{h}(s):\T\to\R^3$ defined along a filament centerline $\X$, we denote the projection of $\bm{h}$ onto the tangential and normal directions about $\X$ as 
\begin{equation}\label{eq:projections}
  \P_{\X_s}\bm{h} = (\X_s\cdot\bm{h}) \X_s\,, \qquad
  \P_{\X_s}^\perp\bm{h} = \bm{h} - \P_{\X_s}\bm{h}\,.
\end{equation}
In addition, we denote the Fourier transform of $\bm{h}$ by
\begin{equation}
  \mc{F}[\bm{h}(s)](k) = \int_\T \bm{h}(s)e^{i2\pi ks}\,ds\,.
\end{equation}
Given a scalar-valued Fourier multiplier $m(k)$, we will use the notation $T_m$ to denote the operator 
\begin{equation}\label{eq:Tmh}
  T_m\bm{h} = \mc{F}^{-1}[m\,\mc{F}[\bm{h}]] \,.
\end{equation}

\begin{definition}[Force to velocity map]\label{def:Lepsbar}
Given $\X\in H^2(\T)$ and a line force density $\bm{f}\in L^2(\T)$, the force-to-velocity map $\overline{\mc{L}}_\epsilon$ is given by 
\begin{equation}
  \overline{\mc{L}}_\epsilon[\bm{f}]= \P_{\X_s}\,T_{m_\epsilon^{\rm t}}(\P_{\X_s}\bm{f}) + \P_{\X_s}^\perp\,T_{m_\epsilon^{\rm n}}(\P_{\X_s}^\perp\bm{f})\,,
\end{equation}
where, for $k\neq 0$, the multipliers $m_\epsilon^{\rm t}(k)$, $m_\epsilon^{\rm n}(k)$ are given by
\begin{align}
\label{eq:multT}
m_\epsilon^{\rm t}(k) &= \frac{2K_0K_1 + 2\pi\epsilon \abs{k} \big( K_0^2 - K_1^2 \big) }{ 8\pi^2\epsilon \abs{k} K_1^2} \\
\label{eq:multN}
m_\epsilon^{\rm n}(k) &= 
\frac{2K_0K_1K_2 + 2\pi\epsilon \abs{k} \big(K_1^2(K_0+K_2)-2K_0^2K_2 \big)}{4\pi^2\epsilon \abs{k}\big(4K_1^2K_2+2\pi\epsilon \abs{k} K_1(K_1^2-K_0K_2)\big)}\,,
\end{align}
where $K_j=K_j(2\pi\epsilon \abs{k})$, $j=0,1,2$, are $j^{\text th}$ order modified Bessel functions of the second kind. 

For $k=0$, we define 
\begin{equation}\label{eq:multN0}
m_\epsilon^{\rm t}(0) = \frac{\abs{\log\epsilon}}{2\pi} \,,
\qquad
m_\epsilon^{\rm n}(0) = \frac{\abs{\log\epsilon}}{4\pi}\,.
\end{equation}
\end{definition}

We provide plots of the \emph{inverse} multipliers $m_\epsilon^{\rm t}(k)^{-1}$ and $m_\epsilon^{\rm n}(k)^{-1}$ in Figure \ref{fig:multipliers} to more clearly display the transition from logarithmic behavior ($m_\epsilon^{\rm t}(k)^{-1},m_\epsilon^{\rm n}(k)^{-1}\sim \abs{\log(\epsilon |k|)}^{-1}$) at low wavenumbers to linear growth $(m_\epsilon^{\rm t}(k)^{-1},m_\epsilon^{\rm n}(k)^{-1}\sim \epsilon |k|)$ at high wavenumbers.

\begin{figure}[!ht]
\centering
\includegraphics[scale=0.4]{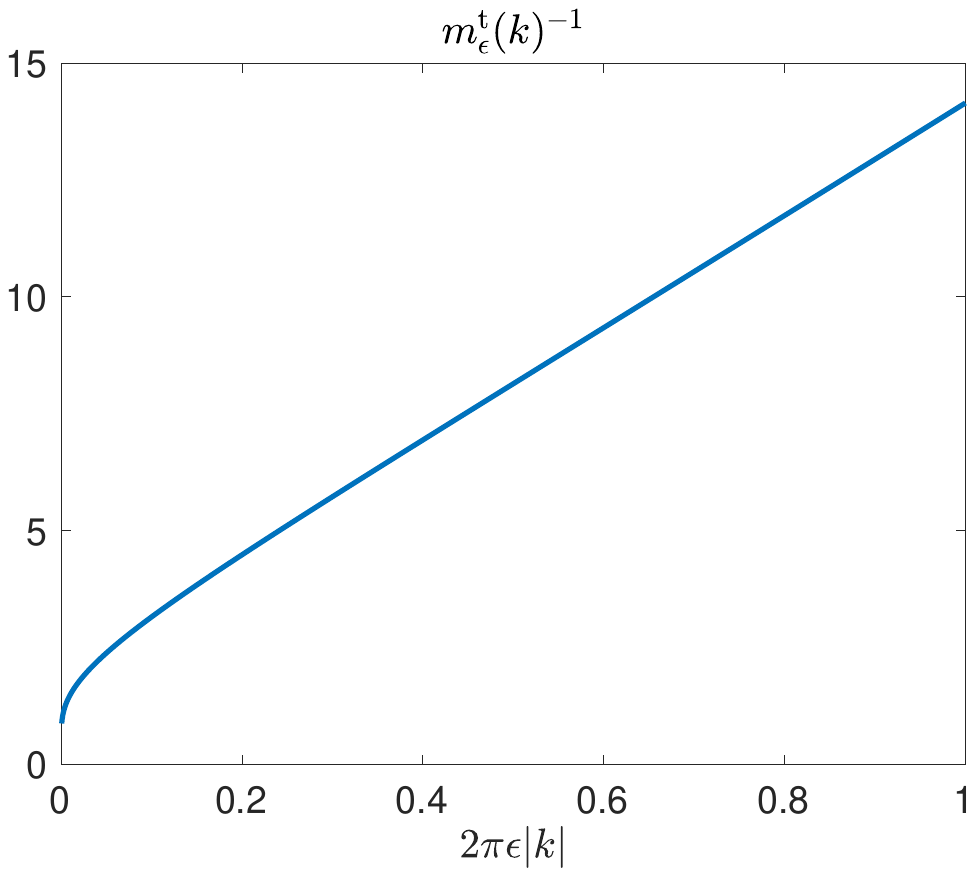}
\includegraphics[scale=0.4]{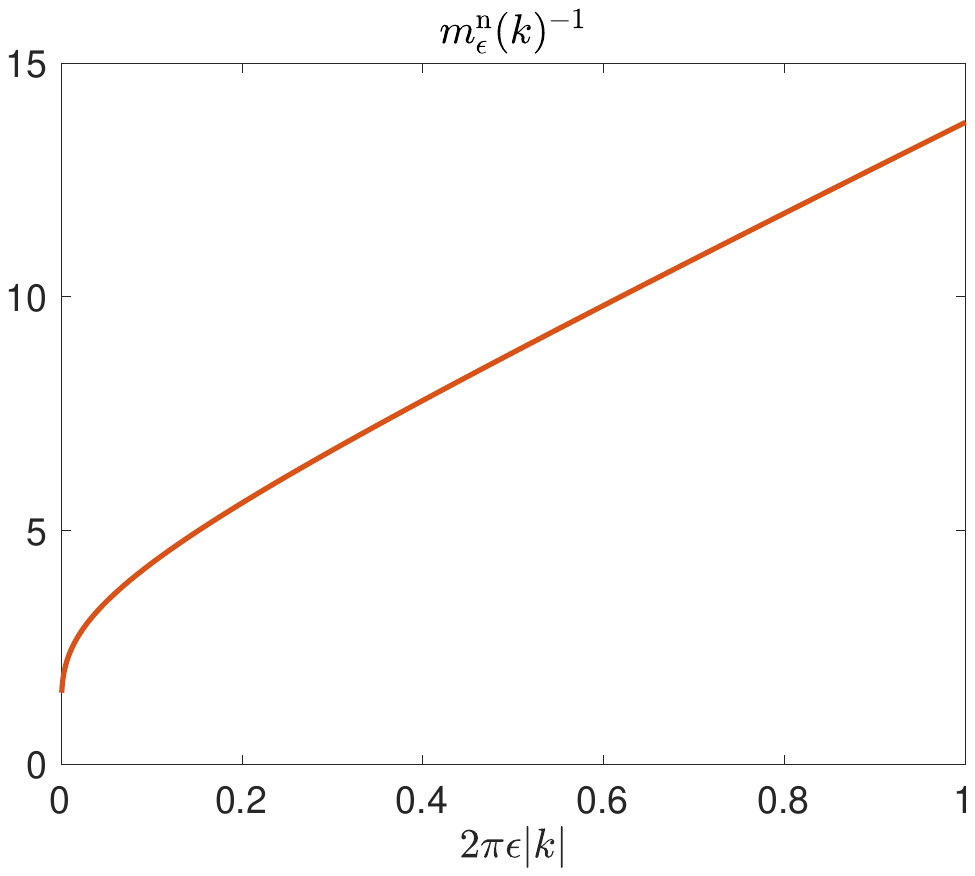}
\caption{Plots of the inverse tangential (left) and normal (right) direction multipliers $m_\epsilon^{\rm t}(k)^{-1}$ and $m_\epsilon^{\rm n}(k)^{-1}$ versus $2\pi\epsilon |k|$. Here the inverses are used to more clearly display the linear ($\sim \epsilon |k|$) behavior at high wavenumbers $|k|\gtrsim \frac{1}{\epsilon}$.}
\label{fig:multipliers}
\end{figure}

For $k\neq 0$, the multipliers $m_\epsilon^{\rm t}(k)$ and $m_\epsilon^{\rm n}(k)$ are precisely the tangential and normal eigenvalues, respectively, of the slender body Neumann-to-Dirichlet map in the special filament geometry of a straight cylinder. 

The regularization \eqref{eq:multN0} for the $k=0$ mode is used since the expressions \eqref{eq:multT}, \eqref{eq:multN} blow up like $\abs{\log(\epsilon |k|)}$ as $k\to 0$. This stems from Stokes' paradox for the straight (periodic/infinite) cylinder, for which these multipliers are calculated, and is not expected to be a physically realistic or meaningful approximation of the zero-mode behavior for a curved, closed filament. In particular, the sense in which $\overline{\mc{L}}_\epsilon$ serves as an approximation for the `full' slender body NtD map (see section \ref{subsec:SB_NtD}) is primarily through high wavenumber behavior. 
Here the choice of regularization for the $k=0$ mode is chosen to match with resistive force theory \eqref{eq:LRFT} and thus facilitate the convergence result of Theorem \ref{thm:RFTconv}. We emphasize that $\overline{\mc{L}}_\epsilon$ in a sense interpolates between resistive force theory at low wavenumbers and the full slender body NtD map at high wavenumbers.

\subsection{Main theorem statements}
Through multiplier and tension bounds outlined in the following sections, we show that the evolution 
\begin{equation}\label{eq:evolution}
\frac{\p\X}{\p t} = -\overline{\mc{L}}_\epsilon(\X)\big[(\X_{sss}-\tau\X_s)_s\big] \,, \quad \abs{\X_s}^2=1\,.
\end{equation}
is a third-order, semilinear, parabolic equation for $\X$, coupled with an elliptic equation for the filament tension $\tau$. It therefore shares features of both resistive force theory and the full slender body free boundary problem. 

In section \ref{sec:GWP}, we show:
\begin{theorem}[Global well-posedness]\label{thm:GWP}
  Given $\epsilon>0$ and an initial curve $\X^{\rm in}\in H^2(\T)$, the evolution \eqref{eq:evolution} with $\overline{\mc{L}}_\epsilon$ as in Definition \ref{def:Lepsbar} admits a unique global solution 
\begin{equation}
  \X\in C([0,+\infty);H^2_s)\cap L^2_{t,{\rm loc}}H^{7/2}_s([0,+\infty) \times \T)
\end{equation}
with initial data $\X^{\rm in}$.
\end{theorem}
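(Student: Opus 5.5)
The proof will combine a local existence theory in $H^2$, built on the parabolic smoothing of the map $\overline{\mc{L}}_\epsilon$, with the energy identity that comes from the gradient-flow structure noted above. The global bound then follows from the conservation of $\abs{\X_s}=1$ together with dissipation of the bending energy $E(\X)=\frac12\int_\T\abs{\X_{ss}}^2\,ds$.

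\textbf{Multiplier bounds and positivity.} From the Bessel asymptotics, $m_\epsilon^{\rm t}(k)$ and $m_\epsilon^{\rm n}(k)$ are positive. They satisfy
\[
c_\epsilon(1+\abs{k})^{-1}\le m_\epsilon^{\rm t,n}(k)\le C_\epsilon(1+\abs{k})^{-1},
\]
with constants depending on $\epsilon$ but not on $k$. Moreover, $(1+\abs{k})m_\epsilon^{\rm t,n}(k)$ is a smooth symbol of order zero. Hence $\overline{\mc{L}}_\epsilon$ is smoothing of order one and coercive:
\[
\langle \overline{\mc{L}}_\epsilon\bm f,\bm f\rangle\ge c_\epsilon\norm{\bm f}_{H^{-1/2}}^2,
\]
since each projection is applied symmetrically. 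Consequently the leading part $-\overline{\mc{L}}_\epsilon[\X_{ssss}]$ acts like $-\abs{\p_s}^3$, which gives a third-order parabolic equation. The commutators $[\P_{\X_s},T_m]$ are bounded by the regularity of $\X_s$ and are of lower order.

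\textbf{Tension equation and local well-posedness.} Differentiating $\abs{\X_s}^2=1$ in time gives $\X_s\cdot\p_s\p_t\X=0$, which produces an elliptic equation for $\tau$. Its principal part is $\X_s\cdot\p_s\P_{\X_s}T_{m^{\rm t}_\epsilon}\P_{\X_s}(\tau_s\X_s)$, roughly $\p_s T_{m^{\rm t}}\p_s\tau$, a first-order elliptic operator in $\tau$. The forcing involves $\X_{ssss}$ and curvature terms. I would invert this operator modulo constants, using coercivity of $m^{\rm t}_\epsilon$ and a Fredholm argument to handle the lower-order curvature coupling, and obtain $\norm{\tau_s}_{H^{s-1}}\lesssim$ (polynomial in $\norm{\X}_{H^2}$)$\cdot\norm{\X}_{H^{s+2}}$. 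The zero mode of $\tau$ needs separate treatment, since $\tau$ is determined only up to the kernel. Substituting $\tau$ into \eqref{eq:evolution} yields a semilinear parabolic system. A contraction mapping in
\[
C([0,T];H^2)\cap L^2(0,T;H^{7/2})
\]
then gives local existence and uniqueness, with existence time $T$ depending only on $\norm{\X^{\rm in}}_{H^2}$. This $H^{7/2}$ space is the maximal-regularity space for an order-3 operator with $H^2$ data.

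\textbf{Global continuation.} Pairing \eqref{eq:evolution} with $\bm f=(\X_{sss}-\tau\X_s)_s$ gives
\[
\frac{d}{dt}E=\langle \X_t,\bm f\rangle=-\langle\overline{\mc{L}}_\epsilon\bm f,\bm f\rangle\le -c_\epsilon\norm{\bm f}_{H^{-1/2}}^2 .
\]
Here the tension term drops out because $\X_s\cdot\X_{st}=0$. So $\norm{\X_{ss}}_{L^2}$ is nonincreasing. Together with $\abs{\X_s}=1$ and control of the mean of $\X$ (which moves at a bounded rate), this gives a uniform $H^2$ bound. Since the local existence time depends only on the $H^2$ norm, iterating yields the global solution.

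\textbf{Main obstacle.} I expect the hardest step to be the tension estimate at the $H^2$ level of regularity. The coefficients $\X_s$ are only in $H^1$, so the commutators and the ellipticity of the tension operator must be handled in borderline spaces. I would first try to exploit the time integrability $\X\in L^2H^{7/2}$ and interpolation, and to estimate $\tau$ in $L^2_tH^{1/2}$ or a comparable space.
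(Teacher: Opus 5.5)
Your global step is sound in principle, but it depends entirely on the problem being semilinear and subcritical in $H^2$. The mechanism you give for that does not work, and this is a genuine gap.

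\textbf{The principal part is not semilinear without the constraint.} Commuting $\P_{\X_s}$ through $T_m$ is not enough, because the two multipliers differ at top order. One checks from the Bessel asymptotics that $m_\epsilon^{\rm t}(k)\approx(8\pi^2\epsilon\abs{k})^{-1}$ while $m_\epsilon^{\rm n}(k)\approx(6\pi^2\epsilon\abs{k})^{-1}$ as $\abs{k}\to\infty$. So after commuting you are still left with $(T_{m_\epsilon^{\rm t}}-T_{m_\epsilon^{\rm n}})(\P_{\X_s}\p_s^4\X)$. This is a third-order term whose coefficients depend on $\X_s$, which is only uniformly in $H^1$. That structure is quasilinear, and a contraction in $C_tH^2_s\cap L^2_tH^{7/2}_s$ with a step depending only on $\norm{\X^{\rm in}}_{H^2}$ does not follow from it.

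The missing idea is the constraint itself: $\P_{\X_s}\X_{ssss}=-3(\X_{ss}\cdot\X_{sss})\X_s$ is quadratic and of lower order. This removes the anisotropic piece. What remains is $T_{m_\epsilon^{\rm n}}\p_s^4\X$ plus a remainder that is sublinear in the top norm (Lemma~\ref{lem:op_decomp}). Your commutator estimates still have a role, e.g.\ for $\P_{\X_s}T_{m_\epsilon^{\rm n}}\p_s^4\X$, but only after the constraint has been used.

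\textbf{The tension bound needs the same identity.} Without it, the tangential forcing in the tension equation contains a full fourth derivative. Counting orders then gives $\tau\sim\p_s^3\X$, a whole copy of the top norm, which is critical. With the identity, that forcing becomes $\p_s\abs{\X_{ss}}^2$. Pairing the normal forcing against $\tau\X_{ss}$ and splitting $\abs{k}^{-1}$ unequally then yields the sublinear power $3/4$ of Lemma~\ref{lem:TDP}.

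\textbf{The tension problem has no kernel.} Its mean needs no separate treatment. The form $\mc{B}_\epsilon(\tau,\tau)=\norm{T_{m_\epsilon^{\rm t}}^{1/2}(\tau_s\X_s)}_{L^2}^2+\norm{T_{m_\epsilon^{\rm n}}^{1/2}(\tau\X_{ss})}_{L^2}^2$ controls constants through the curvature term, because a closed curve cannot be straight. This gives coercivity with constants depending only on $\epsilon$ and $\norm{\X_{ss}}_{L^2}$, which is exactly the uniformity your iteration needs. A Fredholm argument would give invertibility only curve by curve.

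\textbf{Comparison of the global step.} With these repairs, your global argument is a genuine simplification of the paper's. Lemma~\ref{lem:LWP} chooses $T$ via the profile-dependent quantity $\norm{e^{-tT_{m_\epsilon^{\rm n}}\p_s^4}\X^{\rm in}}_{L^2_t\dot H^{7/2}_s}$. The paper therefore continues with a criterion in $L^\infty_tH^2_s\cap L^2_t\dot H^{7/2}_s$ and Lemma~\ref{lem:Xsss_control}. That lemma recovers $\norm{\X_{sss}}_{H^{1/2}}$ from the dissipation $\norm{\bm{Z}}_{\dot H^{1/2}}^2$, $\bm{Z}=\X_{sss}-\tau\X_s$, by writing $\P^\perp_{\X_s}\X_{sss}=\P^\perp_{\X_s}\bm{Z}$ and $\P_{\X_s}\X_{sss}=-\abs{\X_{ss}}^2\X_s$.

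Your route avoids that lemma. Every nonlinear bound carries a positive power of $T$ and a sublinear power of the $L^2_tH^{7/2}_s$ norm, and the free evolution is bounded in $L^2_t\dot H^{7/2}_s$ uniformly in $T$. So you may take $M_1\sim\norm{\X^{\rm in}}_{H^2}$ and get a step depending only on $\norm{\X^{\rm in}_{ss}}_{L^2}$, by translation invariance. The monotone bending energy then finishes the argument.

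The mean of $\X$ is not obviously moving at a bounded rate. Its velocity is controlled only in $L^2_t$ by the dissipation, but that suffices on finite intervals. In exchange for the extra lemma, the paper's route gives an explicit a priori bound on $\int_0^T\norm{\X}_{\dot H^{7/2}}^2\,dt$ from the energy alone.
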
 

The starting point for Theorem \ref{thm:GWP} is the energy identity 
\begin{equation}\label{eq:energy0}
\frac{1}{2}\p_t\int_\T \abs{\X_{ss}}^2\,ds = -\int_{\T}(\X_{sss}-\tau\X_s)_s\cdot\overline{\mc{L}}_\epsilon[(\X_{sss}-\tau\X_s)_s]\,ds\,,
\end{equation}
which is explored in greater detail in sections \ref{subsec:energetics} and \ref{subsec:GWP}. The main difficulty is that it is not immediately clear that the right hand side dissipation controls the curve $\X_{ssss}$ alone, due to the presence of the tension. In addition, necessary ingredients in the proof of local well-posedness in the energy space are \emph{a priori} estimates for the tension, which in section \ref{subsec:tension} is shown to satisfy an elliptic equation at each time. Interestingly, and somewhat surprisingly, the tension is better behaved than its counterpart in resistive force theory, which is critical in a certain sense.\footnote{In particular, the tension $\tau_Y$ enforcing the inextensibility constraint in \eqref{eq:RFT_FBP} may be estimated (see \cite[Lemma 3.3]{albritton2025rods}) as
\begin{equation}\label{eq:RFTtension1}
  \norm{\tau_Y}_{H^1(\T)} \le c\norm{\Y_{ss}}_{H^1(\T)}^2\le c\norm{\Y_{ss}}_{L^2(\T)}\norm{\Y_{ss}}_{H^2(\T)}\,,
\end{equation}
where we emphasize that the right hand side involves a full copy of $\Y\in H^4_s$ and thus requires the full smoothing power of the semigroup $e^{-t\p_s^4}$ to estimate in the energy space. See \cite[Remark 2.4]{mori2023well} for additional discussion.} This may be due to the more physically realistic behavior of $\overline{\mc{L}}_\epsilon$ at high wavenumbers for capturing fluid effects on immersed filament dynamics. A final key element is our decomposition of the main operator $\overline{\mc{L}}_\epsilon\p_s^4$ in section \ref{subsec:mainOpDecomp}, which exploits the inextensibility in a crucial way.

Our next contribution is to compare the dynamics of the curve evolution \eqref{eq:evolution} directly with the resistive force theory evolution 
\begin{equation}\label{eq:RFT_FBP}
  \frac{\p\Y}{\p t} = -\mc{L}_{\epsilon,{\rm RFT}}(\Y)\big[(\Y_{sss}-\tau_Y\Y_s)_s\big] \,, \quad \abs{\Y_s}^2=1\,,
\end{equation}
where $\mc{L}_{\epsilon,{\rm RFT}}$ is given by the local operator \eqref{eq:LRFT}. Here we use $\Y$ to distinguish from the evolution \eqref{eq:evolution}. Since $\mc{L}_{\epsilon,{\rm RFT}}$ is local, \eqref{eq:RFT_FBP} is a fourth-order semilinear parabolic equation for $\Y(s,t)$, so the natural energy space $L^\infty_tH^2_s\cap L^2_tH^4_s$ is more regular than that of the evolution \eqref{eq:evolution}. The PDE theory of \eqref{eq:RFT_FBP} has been developed by the author and others in \cite{albritton2025rods,mori2023well,moreau2025n}. By \cite[Theorem 1.1]{albritton2025rods}, starting from initial data $\X^{\rm in}\in H^2(\T)$, equation \eqref{eq:RFT_FBP} admits a unique global solution $\Y\in C_tH^2_s\cap L^2_{t,{\rm loc}}H^4_s$, and starting from $\X^{\rm in}\in H^4(\T)$, the unique global solution may be upgraded to $\Y\in C_tH^4_s\cap L^2_{t,{\rm loc}}H^6_s$.

Noting that the behavior of both $\overline{\mc{L}}_\epsilon$ (see Lemma \ref{lem:multiplierbds}) and $\mc{L}_{\epsilon,\rm RFT}$ blows up logarithmically in $\epsilon$ as $\epsilon\to 0$, we consider convergence under the time rescaling 
\begin{equation}\label{eq:tunder1}
  \underline{t} = \abs{\log\epsilon}t\,, \qquad \abs{\log\epsilon}\frac{\p}{\p\underline{t}} = \frac{\p}{\p t}\,.
\end{equation}
In section \ref{sec:RFTconverge}, we then show the following convergence result:
\begin{theorem}[Convergence to RFT dynamics]\label{thm:RFTconv}
  Given any $0<T<\infty$ and an initial curve $\X^{\rm in}\in H^4(\T)$, let $\X\in C_tH^2_s\cap L^2_tH^{7/2}_s([0,T]\times \T)$ and $\Y\in C_tH^4_s\cap L^2_tH^6_s([0,T]\times \T)$ be the unique solutions to \eqref{eq:evolution} and the resistive force theory evolution \eqref{eq:RFT_FBP}, respectively, under the time rescaling \eqref{eq:tunder1} and with initial data $\X^{\rm in}$. For $\epsilon>0$ sufficiently small, there exists an $\epsilon$-independent constant $C_{\rm RFT}=C_{\rm RFT}(T,\norm{\X}_{L^\infty_tH^2_s},\norm{\Y}_{L^\infty_tH^4_s\cap L^2_tH^6_s})$ such that
\begin{equation}
  \norm{\X-\Y}_{L^\infty_t H^2_s} + \abs{\log\epsilon}^{1/2}\norm{\X-\Y}_{L^2_t\dot H^{7/2}_s} 
  \le \abs{\log\epsilon}^{-1/2}C_{\rm RFT}\,.
\end{equation}
In particular, on any fixed time interval, the curve $\X(s,t)$ converges to $\Y(s,t)$ as $\epsilon\to 0$.
\end{theorem}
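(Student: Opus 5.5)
Our approach is an energy estimate for the difference $\bm{Z}:=\X-\Y$ in the rescaled time $\underline{t}$. We write both equations as $\p_{\underline t}\X = -\abs{\log\epsilon}^{-1}\overline{\mc{L}}_\epsilon[(\X_{sss}-\tau\X_s)_s]$ and $\p_{\underline t}\Y = -\abs{\log\epsilon}^{-1}\mc{L}_{\epsilon,\rm RFT}(\Y)[(\Y_{sss}-\tau_Y\Y_s)_s]$. The difference then splits into three pieces:
\begin{align*}
\p_{\underline t}\bm{Z} &= -\abs{\log\epsilon}^{-1}\overline{\mc{L}}_\epsilon(\X)\big[(\bm{Z}_{sss}-(\tau-\tau_Y)\X_s-\tau_Y\bm{Z}_s)_s\big]\\
&\quad -\abs{\log\epsilon}^{-1}\big(\overline{\mc{L}}_\epsilon(\X)-\overline{\mc{L}}_\epsilon(\Y)\big)[\bm{f}_Y] -\abs{\log\epsilon}^{-1}\big(\overline{\mc{L}}_\epsilon(\Y)-\mc{L}_{\epsilon,\rm RFT}(\Y)\big)[\bm{f}_Y]\,,
\end{align*}
where $\bm{f}_Y=(\Y_{sss}-\tau_Y\Y_s)_s$. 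The first piece is the dissipative (linearized) part. The second is a commutator-type error in the geometry, which is Lipschitz in $\bm{Z}$. The third is the consistency error, which is the only genuinely $\epsilon$-small source. We test against $\bm{Z}_{ssss}$ (equivalently, take $\frac12\p_{\underline t}\norm{\bm{Z}_{ss}}_{L^2}^2$). Using the main operator decomposition of section \ref{subsec:mainOpDecomp} and the multiplier lower bounds of Lemma \ref{lem:multiplierbds}, the first piece should produce coercive dissipation of size $\abs{\log\epsilon}^{-1}\langle \bm{Z}_{ssss}, \overline{\mc{L}}_\epsilon \bm{Z}_{ssss}\rangle \gtrsim \norm{\bm{Z}}_{\dot H^{7/2}}^2$ on modes $\abs{k}\gtrsim\epsilon^{-1}$ and $\gtrsim \norm{\P_{\le 1/\epsilon} \bm{Z}}_{\dot H^4}^2$ at low modes. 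Combined with weights $\abs{\log\epsilon}^{-1}m_\epsilon$, this dominates $\abs{\log\epsilon}\,\norm{\bm{Z}}^2_{\dot H^{7/2}}$ after the time rescaling. This is where the factor $\abs{\log\epsilon}^{1/2}$ on the $L^2_t\dot H^{7/2}$ norm in the statement comes from.

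The tension difference $\tau-\tau_Y$ is handled through the elliptic tension equations of section \ref{subsec:tension}. We subtract the equation for $\tau$ from that of $\tau_Y$, and bound $\tau-\tau_Y$ in terms of $\bm{Z}$ (in $H^2$ and a fractional amount of the dissipation norm) plus a consistency error. The latter comes from the fact that the two tension equations use different operators. The inextensibility of both curves, $\bm{Z}_s\cdot(\X_s+\Y_s)=0$, is used to kill the worst tangential contributions, just as in the decomposition of $\overline{\mc{L}}_\epsilon\p_s^4$. The geometric error term (the second piece) involves $\P_{\X_s}-\P_{\Y_s}$, which is bounded by $\abs{\bm{Z}_s}$, acting on the smooth $\bm{f}_Y\in L^2_tH^2_s$. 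It is absorbed by Gronwall using $\norm{\Y}_{L^2_tH^6_s}$, giving a constant depending only on the stated norms.

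The main obstacle is the consistency term $(\overline{\mc{L}}_\epsilon-\mc{L}_{\epsilon,\rm RFT})[\bm{f}_Y]$, since there is no room for waste in the logarithmic rate. The first idea is to use the explicit Bessel asymptotics: for $1\le\abs{k}\ll\epsilon^{-1}$ we have $m^{\rm t}_\epsilon(k)-\frac{\abs{\log\epsilon}}{2\pi} = O(\log(1+\abs{k}))$, and similarly for $m^{\rm n}_\epsilon$. Since the zero mode matches exactly by \eqref{eq:multN0}, the difference multiplier is bounded by $C\log(2+\abs{k})$ at all $k$, uniformly in $\epsilon$ (for large $k$ both operators behave no worse than $\abs{\log\epsilon}$ anyway). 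Hence after dividing by $\abs{\log\epsilon}$, the error is $\lesssim\abs{\log\epsilon}^{-1}\norm{\bm{f}_Y}_{H^{\delta}}$, which is bounded in $L^2_t$ by $\norm{\Y}_{L^2_tH^6_s}$ (this requires the $H^4$ data and upgraded regularity of $\Y$). We pair this with $\bm{Z}_{ssss}$. Here one must place the half derivative correctly: write the pairing as $\langle \abs{\p_s}^{1/2}\text{err},\abs{\p_s}^{-1/2}\p_s^4\bm{Z}\rangle$, so that it is bounded by $\abs{\log\epsilon}^{-1}\norm{\Y}_{H^{6}}\norm{\bm{Z}}_{\dot H^{7/2}}$. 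Young's inequality against $\abs{\log\epsilon}\norm{\bm{Z}}^2_{\dot H^{7/2}}$ then yields a forcing of size $\abs{\log\epsilon}^{-3}$... In practice I expect only $\abs{\log\epsilon}^{-1}$ after accounting for the tension consistency error and the precise low-wavenumber dissipation, which still gives the claimed $\norm{\bm{Z}}_{L^\infty_tH^2}\lesssim\abs{\log\epsilon}^{-1/2}$ by Gronwall with $\bm{Z}(0)=0$. Checking that the tension consistency error is no worse than this, using the improved tension bounds for $\overline{\mc{L}}_\epsilon$ versus the critical RFT bound \eqref{eq:RFTtension1}, is the step I would verify most carefully.
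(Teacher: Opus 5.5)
Your architecture is essentially the paper's: you test the equation for $\bm W=\X-\Y$ (your $\bm Z$) against $\bm W_{ssss}$, separate a geometric Lipschitz term from the consistency term, and handle $\tau_W=\tau_X-\tau_Y$ through the weak tension equations. Two steps, however, fail as stated.

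\textbf{Coercivity and the logarithmic weight.} Your coercivity claim is false. After rescaling, the dissipation is $\abs{\log\epsilon}^{-1}\int_\T\bm W_{ssss}\cdot\overline{\mc L}_\epsilon[\bm W_{ssss}]\,ds$, and Proposition \ref{prop:positivity} bounds it below only by $c\norm{\bm W}_{\dot H^{7/2}}^2$. It cannot dominate $\abs{\log\epsilon}\norm{\bm W}_{\dot H^{7/2}}^2$: at $\abs{k}=1$ the Fourier weight $\abs{\log\epsilon}^{-1}m_\epsilon(1)\abs{k}^8$ is $O(1)$, whereas you would need it to be of size $\abs{\log\epsilon}$. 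So the factor $\abs{\log\epsilon}^{1/2}$ on $L^2_t\dot H^{7/2}_s$ can only come from every source term being $O(\abs{\log\epsilon}^{-2})$. With the $\abs{\log\epsilon}^{-1}$ you end up expecting, you would get only $\norm{\bm W}_{L^2_t\dot H^{7/2}_s}\lesssim\abs{\log\epsilon}^{-1/2}$.

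Your direct consistency estimate is actually good enough for the curve term. The multiplier differences are $O(\log(2+\abs{k}))$ uniformly in $\epsilon$, so pairing with the correct dissipation gives a source $C\abs{\log\epsilon}^{-2}\norm{\bm f_Y}_{H^{1/2+\delta}}^2$. This is sharper than the route through Lemma \ref{lem:mult_RFT_diff}, where $T_{m_\epsilon^{\rm t}}^{1/2}$ falls on $\bm W_{ssss}$.

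Two smaller points. Do not invoke Lemma \ref{lem:op_decomp}: its remainder is $O(\epsilon^{-3/2})$, and coercivity comes directly from Proposition \ref{prop:positivity}. Also, the mean of $\bm W$ is invisible when testing with $\bm W_{ssss}$ and needs its own estimate.

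\textbf{The tension step.} This is the real gap. You assert that $\tau_W$ is controlled by $\bm W$ in $H^2$ plus a fractional power of the dissipation, but the weak tension-difference equation contains the top-order source $\int_\T\varphi\,\X_{ss}\cdot T_{m_\epsilon^{\rm n}}\bm W_{ssss}\,ds$ (inside $R^\perp$ in the proof of Lemma \ref{lem:tensionXY_diff}). With $\varphi=\tau_W$, Cauchy--Schwarz bounds it exactly at the level $\norm{T_{m_\epsilon^{\rm n}}^{1/2}(\tau_W\X_{ss})}_{L^2}\norm{T_{m_\epsilon^{\rm n}}^{1/2}\bm W_{ssss}}_{L^2}$. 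The only subcritical splitting available, the one in Lemma \ref{lem:TDP}, costs $\epsilon^{-1}$. The $\epsilon$-uniform Lemma \ref{lem:tensionXY_diff} contains a full copy of $\norm{T_{m_\epsilon^{\rm t}}^{1/2}\bm W_{ssss}}_{L^2}$ with coefficient $1+\norm{\X_s}_{H^1}+\norm{\Y_s}_{H^3}^2$. Inserted into the $\bm W_{ssss}$-tested identity, the cross term $\int_\T\P_{\X_s}^\perp\bm W_{ssss}\cdot T_{m_\epsilon^{\rm n}}(\tau_W\X_{ss})\,ds$ is then of the same order as the dissipation with a non-small constant, and cannot be absorbed.

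You cannot lean on the paper here either. Its absorption in \eqref{eq:diff_ineq} uses $D_W$ with coefficient one on the left of \eqref{eq:Wenergy3}. Dividing \eqref{eq:Wenergy2} by $\abs{\log\epsilon}$ actually leaves $\abs{\log\epsilon}^{-1}D_W$ there, the same order as the term $\abs{\log\epsilon}^{-1}C_{XY2}D_W$ being absorbed. This slip is also where the paper's extra $\abs{\log\epsilon}^{1/2}$ on $\dot H^{7/2}$ comes from, mirroring your coercivity claim.

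A natural way around this, which I have only sketched and not fully verified, is to test against $\bm G=(\bm W_{sss}-\tau_W\X_s-\tau_Y\bm W_s)_s$ instead of $\bm W_{ssss}$. The tension then sits inside the positive form $\int_\T\bm G\cdot\overline{\mc L}_\epsilon(\X)[\bm G]\,ds$, as in \eqref{eq:energyID}. The extra time-derivative terms are lower order because $\p_t\bm W_s\cdot\X_s=-\p_t\Y_s\cdot\bm W_s$. You can then recover $\bm W_{sss}$ and $\tau_W$ from $\bm G$ as in Lemma \ref{lem:Xsss_control}, using $\X_s\cdot\bm W_{sss}=-\bm W_{ss}\cdot(\X_{ss}+\Y_{ss})-\bm W_s\cdot\Y_{sss}$. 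As written, though, the decisive estimate of your proposal is missing.
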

Note that the very slow convergence rate is expected, and, for $L^2_t\dot H^{7/2}_s$, is likely optimal.

\subsection{Connection with the slender body Neumann-to-Dirichlet map}\label{subsec:SB_NtD}
As mentioned, arguably the most detailed method for incorporating 3D fluid effects into the force-to-velocity map $\mc{L}_\epsilon$ is by solving the following \emph{slender body boundary value problem}, introduced by the author together with Mori and Spirn in \cite{closed_loop,free_ends}. Given $0<\epsilon\ll1$ sufficiently small, we may define a `fattened' version of the curve $\X$ as 
\begin{equation}\label{eq:SigmaEps}
  \Sigma_\epsilon(t) = \big\{ \bx\in \X_s(s,t)^\perp\,:\, {\rm dist}(\bx,\X(s,t))<\epsilon\,, \, s\in \T \big\}\,.
\end{equation}
The 3D body $\Sigma_\epsilon$ may be considered as a regularization of the filament centerline which allows us to seek $(\bu,p): \R^3\backslash \overline{\Sigma_\epsilon} \to \R^3\times\R$ satisfying the boundary value problem
\begin{equation}\label{eq:SB_PDE}
\begin{aligned}
-\Delta \bu +\nabla p &= 0\,, \quad \div\bu=0 \qquad \text{in }\R^3\backslash \overline{\Sigma_\epsilon} \\
\int_0^{2\pi}(\bm{\sigma}[\bu]\bm{n}) \, \mc{J}_\epsilon(s,\theta)\,d\theta &= \bm{f}(s) \qquad\qquad\qquad\; \text{on }\p\Sigma_\epsilon \\
\bu\big|_{\p\Sigma_\epsilon} &= \bv(s)\,, \qquad\qquad\quad\;\;\, \text{unknown but independent of }\theta\,,   
\end{aligned}
\end{equation}
and $\abs{\bu}\to 0$ as $\abs{\bx}\to\infty$. In this boundary value problem, we make sense of the force data $\bm{f}(s)$ defined along a 1D curve as the total surface stress $\bm{\sigma}[\bu]\bm{n}= (\nabla\bu+\nabla\bu^{\rm T}-p{\bf I})\bm{n}$ per cross section of the 3D body $\Sigma_\epsilon$, weighted by the body's surface area through a Jacobian factor $\mc{J}_\epsilon$. Since this is only a partial, `angle-averaged' Neumann boundary condition, it must be supplemented with an additional partial boundary condition in the form of a geometric constraint on the Dirichlet boundary value of $\bu$ known as a fiber integrity condition. In particular, $\bu\big|_{\p\Sigma_\epsilon}$ is unknown but constrained to be a function of arclength $s$ only.

The \emph{slender body Neumann-to-Dirichlet (NtD) map} $\mc{L}_{\epsilon,\rm NtD}$ is then defined as the map
\begin{equation}\label{eq:SB_NtD}
\mc{L}_{\epsilon,\rm NtD}(\X) : \bm{f}(s) \mapsto \bm{v}(s)
\end{equation}
which comes from solving the (Neumann) boundary value problem \eqref{eq:SB_PDE} for $\bu$ and evaluating the Dirichlet boundary value on $\p\Sigma_\epsilon$. The solution to this boundary value problem depends on both the filament radius $\epsilon$ and the centerline geometry $\X(s)$ in a complicated way. To isolate these complexities, we can consider the slender body NtD map in the simplest possible geometry, which here is the straight, constant-radius filament with periodic boundary conditions at the ends. 

 In particular, let $\mc{C}_\epsilon$ denote the body \eqref{eq:SigmaEps} with straight centerline $\X(s)=s\be_z$, $s\in \T$. The behavior of the slender body NtD map \eqref{eq:SB_NtD} about $\mc{C}_\epsilon$ conveniently diagonalizes, with differing behavior in directions tangent and normal to the filament centerline. In particular, for $k\neq 0$, it may be shown (see \cite{inverse}) that
 \begin{equation}\label{eq:eval_prob}
 \begin{aligned}
\mc{L}_{\epsilon,\rm NtD}(s\be_z)[e^{2\pi iks}\be_z] &= m_{\epsilon}^{\rm t}(k)e^{2\pi iks}\be_z\,, \\
\mc{L}_{\epsilon,\rm NtD}(s\be_z)[e^{2\pi iks}\be_j] &= m_{\epsilon}^{\rm n}(k)e^{2\pi iks}\be_j\,, \quad j=x,y \,,
\end{aligned}
\end{equation}
where the eigenvalues $m_{\epsilon}^{\rm t}(k)$ and $m_{\epsilon}^{\rm n}(k)$ are as in Definition \ref{def:Lepsbar}. More specifically, for the straight filament, we have that the slender body NtD map $\mc{L}_{\epsilon,\rm NtD}(s\be_z)[\cdot]$ is exactly the operator $\overline{\mc{L}}_\epsilon(s\be_z)[\cdot]$ of Definition \ref{def:Lepsbar} (after removing the $k=0$ mode).

For a general curved filament, the map $\mc{L}_{\epsilon,\rm NtD}(\X)$ no longer admits an explicit Fourier multiplier representation. However, in \cite[Theorem 1.5 \& Corollary 1.6]{ohm2024free}, it is shown that the principal part of the general operator $\mc{L}_{\epsilon,\rm NtD}(\X)$ is given by the straight NtD map $\mc{L}_{\epsilon,\rm NtD}(s\be_z)$ in the sense that
\begin{equation}\label{eq:NtD_decomp}
  \mc{L}_{\epsilon,\rm NtD}(\X)[\p_s^4\X] = \mc{L}_{\epsilon,\rm NtD}(s\be_z)[\p_s^4\X] + \mc{R}_{\rm err}\,,
\end{equation}
where the remainder terms $\mc{R}_{\rm err}$ are lower order with respect to regularity or size in $\epsilon$. In \cite[Theorem 1.1]{ohm2025slender}, this decomposition is leveraged to show qualitative local well-posedness for the full \emph{slender body free boundary problem}
\begin{equation}\label{eq:SB_FBP}
  \frac{\p\X}{\p t} = -\mc{L}_{\epsilon,\rm NtD}(\X)\big[(\X_{sss}-\tau\X_s)_s\big] \,, \quad \abs{\X_s}^2=1\,,
\end{equation}
where $\mc{L}_{\epsilon,\rm NtD}(\X)$ is given by \eqref{eq:SB_NtD} for the instantaneous curve shape $\X$. More precisely, local well-posedness is shown in $C([0,T];h^{4,\alpha}(\T))$ for an initial curve belonging to the little H\"older space $h^{4,\alpha}(\T)$. Note that, by \eqref{eq:NtD_decomp}, the evolution \eqref{eq:SB_FBP} is a third order, quasilinear parabolic equation for $\X$. In particular, this regularity is shared by the evolution \eqref{eq:evolution} considered here.  

The setting of H\"older spaces is more convenient for the operator decomposition \eqref{eq:NtD_decomp}, but less convenient for obtaining insight into long-time dynamics. Here our choice of operator $\overline{\mc{L}}_\epsilon$ as in Definition \ref{def:Lepsbar} is inspired by the principal straight part of the slender body NtD map, adapted slightly for application directly to a curved filament. This allows us to develop the solution theory of the principal dynamics in a natural energy space while avoiding the additional difficulties associated with (a.) decomposing the full operator as in \eqref{eq:NtD_decomp} in lower regularity spaces, and (b.) accounting for filament self-intersection.

\section{Multiplier bounds and energetics}
Before proceeding to the proof of Theorem \ref{thm:GWP}, we make note of some useful bounds for the multipliers $m_\epsilon^{\rm t}$ and $m_\epsilon^{\rm n}$ comprising the operator $\overline{\mc{L}}_\epsilon$. These may be used to show that the energy identity satisfied by solutions to \eqref{eq:evolution} is in fact signed, which will be crucial for the global-in-time behavior of the filament.

\subsection{Multiplier bounds}
We begin with a collection of lemmas bounding the behavior of the multipliers $m_\epsilon^{\rm t}(k)$ and $m_\epsilon^{\rm n}(k)$ in both $k$ and $\epsilon$. 
First, from \cite[Lemmas 3.4 \& 3.5]{ohm2024free}, we note the following general bounds for the behavior of $m_\epsilon^{\rm t}$ and $m_\epsilon^{\rm n}$, which differs at high versus low wavenumbers. 
\begin{lemma}[Multiplier behavior \cite{ohm2024free}]\label{lem:multiplierbds}
Given $\epsilon>0$ and $m_\epsilon^{\rm t}(k)$ and $m_\epsilon^{\rm n}(k)$ as in \eqref{eq:multT}, \eqref{eq:multN}, there exist constants $c>0$ such that 
  \begin{equation}
  \begin{aligned}
    m_\epsilon^{\rm t}(k)\,,\;m_\epsilon^{\rm n}(k) &\le \begin{cases}
      c\abs{\log\epsilon} \,, & \abs{k}< \frac{1}{2\pi\epsilon}\\
      c\,\epsilon^{-1}\abs{k}^{-1} \,, & \abs{k}\ge \frac{1}{2\pi\epsilon}
    \end{cases} \,,\\
    m_\epsilon^{\rm t}(k)^{-1}\,,\; m_\epsilon^{\rm n}(k)^{-1} &\le \begin{cases}
      c\abs{\log\epsilon}^{-1} \,, & \abs{k}< \frac{1}{2\pi\epsilon}\\
      c\,\epsilon\abs{k} \,, & \abs{k}\ge \frac{1}{2\pi\epsilon}\,.
    \end{cases}
  \end{aligned}
  \end{equation}
\end{lemma}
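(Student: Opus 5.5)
The plan is to reduce everything to the single variable $z=2\pi\epsilon\abs{k}>0$. Both multipliers then have the form $\epsilon^{-1}\abs{k}^{-1}\,g(z)$, with $g$ an explicit rational expression in $K_0(z),K_1(z),K_2(z)$, namely
\begin{equation*}
g_{\rm t}(z) = \frac{2K_0K_1 + z(K_0^2-K_1^2)}{8\pi^2K_1^2}\,.
\end{equation*}
The normal multiplier $g_{\rm n}$ has the analogous form. The case $k=0$ is immediate from \eqref{eq:multN0}, so only $k\neq 0$ needs work. For each regime we need two things: asymptotics of $g$, and positivity of $g$ on the whole half-line, which is what gives the bounds on the inverse multipliers.

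For the high-wavenumber regime $z\ge 1$, we use the large-argument expansions $K_\nu(z)=\sqrt{\pi/(2z)}e^{-z}\big(1+\frac{4\nu^2-1}{8z}+O(z^{-2})\big)$. The common factor $\frac{\pi}{2z}e^{-2z}$ cancels in each ratio. We then check that the numerator and denominator combinations have leading terms of the same order and positive coefficients.

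For $g_{\rm t}$, note that $z(K_0^2-K_1^2)\approx -K_0K_1$ to leading order, so $g_{\rm t}(z)\to \frac{1}{8\pi^2}$. For $g_{\rm n}$, the combination $K_1^2-K_0K_2$ is $O(z^{-1})\times\frac{\pi}{2z}e^{-2z}$, and a similar bookkeeping shows that $g_{\rm n}$ tends to a positive constant. Together with continuity on $[1,\infty)$, this gives $c^{-1}\le g\le c$ there. That is exactly $m\sim(\epsilon\abs{k})^{-1}$, which yields both stated bounds for $\abs{k}\ge\frac{1}{2\pi\epsilon}$.

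For the low-wavenumber regime $\abs{k}<\frac{1}{2\pi\epsilon}$, i.e. $2\pi\epsilon\le z<1$ with $\abs{k}\ge1$, we use the small-argument behavior
\begin{equation*}
K_0(z)=-\log(z/2)-\gamma+O(z^2\abs{\log z}),\qquad K_1(z)=z^{-1}+O(z\abs{\log z}),\qquad K_2(z)=2z^{-2}-\tfrac12+O(z^2\abs{\log z}).
\end{equation*}
Substituting, $m^{\rm t}_\epsilon = \frac{1}{2\pi}\big(K_0(z)+O(1)\big)$ and $m^{\rm n}_\epsilon=\frac{1}{4\pi}\big(K_0(z)+O(1)\big)$, consistent with resistive force theory. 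Hence $m\approx\abs{\log z}+O(1)$ with $z\in[2\pi\epsilon,1)$.

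The upper bound $m\le c\abs{\log\epsilon}$ follows immediately. The lower bound $m\ge c^{-1}\abs{\log\epsilon}$ cannot hold uniformly on this range, since near $z\sim1$ we only have $m\sim 1$. I therefore expect the inverse bound there to be the main obstacle. Either the intended statement should be read as $m^{-1}\le c$ (hence $\le c\abs{\log(\epsilon\abs{k})}^{-1}$), or it is meant only for $\abs{k}$ small relative to $\epsilon^{-1}$. I would first prove the sharp two-sided statement
\begin{equation*}
c^{-1}(1+\abs{\log(\epsilon\abs{k})})\le m\le c(1+\abs{\log(\epsilon\abs{k})}),
\end{equation*}
and then deduce the stated form, checking the intended normalization against \cite[Lemmas 3.4 \& 3.5]{ohm2024free}.

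The remaining genuinely nontrivial step is strict positivity of $g_{\rm t},g_{\rm n}$ on the intermediate compact range, where the asymptotics do not apply. In particular, the denominator $4K_1^2K_2+zK_1(K_1^2-K_0K_2)$ must not vanish. For this I would use $K_1^2-K_0K_2<0$, which follows from the Turán-type inequality for $K_\nu$, together with $K_2=K_0+\frac{2}{z}K_1$. These reduce the sign question to monotonicity of the ratio $K_0/K_1$. I would then dominate the negative term using $K_0/K_1\ge z/(z+1/2+\ldots)$-type bounds. As a fallback, one can combine a rigorous numerical verification on a compact interval with the two asymptotic regimes.
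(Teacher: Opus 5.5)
Your objection is correct: the low-wavenumber inverse bound cannot be proved as written, and your direct proof establishes the right replacement by a route different from the paper's.

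\textbf{The counterexample.} Both multipliers depend on $(\epsilon,k)$ only through $z=2\pi\epsilon\abs{k}$; for instance $m_\epsilon^{\rm t}=\frac{2K_0K_1+z(K_0^2-K_1^2)}{4\pi z K_1^2}$. For the integers $k$ just below $\frac{1}{2\pi\epsilon}$ one has $z\uparrow 1$. There both $m_\epsilon^{\rm t}$ and $m_\epsilon^{\rm n}$ are $\approx 0.07$, independently of $\epsilon$. Hence $m_\epsilon(k)^{-1}\le c\abs{\log\epsilon}^{-1}$ fails on $\abs{k}<\frac{1}{2\pi\epsilon}$ as $\epsilon\to 0$.

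\textbf{Comparison with the paper.} The paper gives no argument of its own; it imports the lemma from \cite{ohm2024free}. Your route reduces everything to $z$, uses the Bessel asymptotics as $z\to0$ and $z\to\infty$, and proves positivity on compact sets. It yields the correct two-sided statement: $m\sim 1+\abs{\log(\epsilon\abs{k})}$ for $1\le\abs{k}<\frac{1}{2\pi\epsilon}$, and $m\sim(\epsilon\abs{k})^{-1}$ for $\abs{k}\ge\frac{1}{2\pi\epsilon}$.

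\textbf{Two small corrections.} First, ``$m^{-1}\le c$ (hence $\le c\abs{\log(\epsilon\abs{k})}^{-1}$)'' has the implication backwards. The sharp bound implies $m^{-1}\le c$, not conversely. Second, from the sharp bound you cannot ``deduce the stated form''. You can only deduce the weakened bound $m_\epsilon(k)\ge c\abs{\log\epsilon}(1+\abs{k})^{-1}$ on the low range. To see this, split at $\abs{k}=\epsilon^{-1/2}$: below it, $\log\frac{1}{\epsilon\abs{k}}\ge\frac12\abs{\log\epsilon}$; above it, $\abs{\log\epsilon}/\abs{k}\le\epsilon^{1/2}\abs{\log\epsilon}$ is bounded.

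That weaker bound is all that Proposition \ref{prop:positivity}, Lemma \ref{lem:linear} and Lemma \ref{lem:Tinverse} actually use. Your sharp version is also what justifies the comparability $m_\epsilon^{\rm n}\sim m_\epsilon^{\rm t}$ invoked in Section \ref{sec:RFTconverge}.

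\textbf{Positivity is easier than you expect.} No numerics and no Tur\'an inequality are needed. Substitute $K_2=K_0+\frac{2}{z}K_1$ and use $0<K_0<K_1$. The numerator of $m_\epsilon^{\rm n}$ becomes $\frac{4}{z}K_0K_1^2+2(K_1+zK_0)(K_1^2-K_0^2)$. Its denominator becomes $K_1\big(2K_0K_1+\frac{8}{z}K_1^2+z(K_1^2-K_0^2)\big)$. Both are manifestly positive.

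The only genuine sign question is the numerator $2K_0K_1-z(K_1^2-K_0^2)$ of $m_\epsilon^{\rm t}$. It is positive if and only if $h:=K_1/K_0<\frac{1+\sqrt{1+z^2}}{z}$. This follows from the Riccati equation $h'=h^2-\frac{h}{z}-1$. The positive root $h_*(z)=\frac{1+\sqrt{1+4z^2}}{2z}>1$ of the right-hand side is decreasing in $z$. So if $h\ge h_*$ at some $z_0$, then $h$ would increase for all $z>z_0$, contradicting $h\to1$ as $z\to\infty$. Hence $h<h_*<\frac{1+\sqrt{1+z^2}}{z}$ for all $z>0$. This is exactly the ``$K_0/K_1\ge z/(z+\tfrac12+\dots)$-type'' bound you had in mind.
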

This behavior is displayed in Figure \ref{fig:multipliers}.
In addition, we will require some refinements to Lemma \ref{lem:multiplierbds} at both high and low wavenumbers. For high wavenumbers, by \cite[Proposition 1.4]{inverse}, we may obtain a more precise linear growth rate for $(m_\epsilon^{\rm t})^{-1}$ and $(m_\epsilon^{\rm n})^{-1}$ as follows.  
\begin{lemma}[High wavenumber refinement \cite{inverse}]\label{lem:highk_refinement}
There exist constants $c_{\rm t}$, $c_{\rm n}>0$ such that for all $k>0$, the multipliers $m_\epsilon^{\rm t}$ and $m_\epsilon^{\rm n}$ in \eqref{eq:multT}, \eqref{eq:multN} satisfy 
   \begin{equation}
   \begin{aligned}
    c_{\rm t}^{-1}\epsilon\abs{k} &\le m_\epsilon^{\rm t}(k)^{-1} \le c_{\rm t}^{-1}(\epsilon\abs{k}+1)
    \qquad
    c_{\rm n}^{-1}\epsilon\abs{k} &\le m_\epsilon^{\rm n}(k)^{-1} \le c_{\rm n}^{-1}(\epsilon\abs{k}+1)\,.
    \end{aligned}
  \end{equation}
\end{lemma}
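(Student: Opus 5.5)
The plan is to reduce both bounds to elementary statements about a single function of one variable, and then handle small, intermediate and large arguments separately. Set $z=2\pi\epsilon\abs{k}>0$. Multiplying numerator and denominator of \eqref{eq:multT}, \eqref{eq:multN} by suitable powers of $\epsilon\abs{k}$ shows that the inverse multipliers depend on $\epsilon,k$ only through $z$:
\begin{equation*}
m_\epsilon^{\rm t}(k)^{-1}=F_{\rm t}(z):=\frac{4\pi z K_1^2}{2K_0K_1+z(K_0^2-K_1^2)},\qquad
m_\epsilon^{\rm n}(k)^{-1}=F_{\rm n}(z):=\frac{2\pi z\big(4K_1^2K_2+zK_1(K_1^2-K_0K_2)\big)}{2K_0K_1K_2+z\big(K_1^2(K_0+K_2)-2K_0^2K_2\big)},
\end{equation*}
with $K_j=K_j(z)$. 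Since $\epsilon\abs{k}=z/2\pi$, the lemma is equivalent to two-sided bounds $c^{-1}z\le F(z)\le c(z+1)$ for all $z>0$, with absolute constants. It therefore suffices to show that each $F$ is continuous and strictly positive on $(0,\infty)$, and to establish the claimed behavior as $z\to0^+$ and as $z\to\infty$; the bounds on any compact interval $[z_0,z_1]$ then follow by compactness.

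\emph{Small $z$.} I will use $K_0(z)=-\log(z/2)-\gamma+o(1)$, $K_1(z)=z^{-1}+O(z\abs{\log z})$ and $K_2(z)=2z^{-2}-\tfrac12+O(z^2\abs{\log z})$. For the tangential multiplier, the numerator of $F_{\rm t}$ is about $4\pi/z$ and the denominator about $(2\abs{\log z}-1+O(1))/z$. Hence $F_{\rm t}(z)\approx 4\pi/(2\abs{\log z})\to0$, which gives the upper bound $c(z+1)$. It also gives the lower bound $c^{-1}z$, because $z\abs{\log z}\to0$. For $F_{\rm n}$, the leading terms of the numerator and denominator are $\sim z^{-3}$ and $\sim z^{-3}\abs{\log z}$, so again $F_{\rm n}(z)\sim c/\abs{\log z}$. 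Some care with the leading-order cancellations in $K_1^2-K_0K_2$ and $K_1^2(K_0+K_2)-2K_0^2K_2$ will be needed; I will check the sign of the leading term directly.

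\emph{Large $z$.} I will use the Hankel expansion $K_j(z)=\sqrt{\pi/2z}\,e^{-z}\big(1+\tfrac{4j^2-1}{8z}+O(z^{-2})\big)$. Then $K_0^2-K_1^2=\tfrac{\pi}{2z}e^{-2z}\big(-z^{-1}+O(z^{-2})\big)$. The denominator of $F_{\rm t}$ becomes $\tfrac{\pi}{2z}e^{-2z}(1+O(z^{-1}))$, while the numerator is $4\pi z\cdot\tfrac{\pi}{2z}e^{-2z}(1+O(z^{-1}))$. Hence $F_{\rm t}(z)=4\pi z+O(1)$. The same bookkeeping applies to $F_{\rm n}$; there all terms carry the common factor $(\pi/2z)^{3/2}e^{-3z}$ and must be expanded to first order in $z^{-1}$. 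This should give $F_{\rm n}(z)=c z+O(1)$ with $c>0$. Combined with the previous paragraph and compactness, this yields the lemma.

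The main obstacle is positivity of the denominators for \emph{all} $z>0$, for instance $2K_0K_1+z(K_0^2-K_1^2)>0$, where the second term is negative. Without it, $F$ could blow up at intermediate $z$. I would first try a structural argument. For $k\neq0$, the multipliers are eigenvalues of the slender body NtD map about the straight cylinder, by \eqref{eq:eval_prob}. The Stokes energy identity $\int\bm f\cdot\mc L_{\epsilon,\rm NtD}[\bm f]=2\int\abs{\mathbb{E}(\bu)}^2>0$ for $\bm f\neq0$ makes these eigenvalues finite and strictly positive, so $F=m^{-1}$ is finite and positive. Moreover, $F$ cannot vanish at finite $z$, because the numerator $K_1^2>0$ (respectively $4K_1^2K_2+zK_1(K_1^2-K_0K_2)$, whose positivity I would verify via Turán-type inequalities for $K_\nu$) is nonzero. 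If a self-contained proof is preferred, the fallback is to use the integral representations $K_\nu(z)=\int_0^\infty e^{-z\cosh t}\cosh(\nu t)\,dt$, together with the known monotonicity of $z\mapsto zK_0/K_1$ and of $K_0/K_1$. From these I would prove $K_1^2-K_0^2<2K_0K_1/z$ directly, and analogously for the normal denominator.
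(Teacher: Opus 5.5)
\paragraph{The gap.} Your reduction in the first paragraph is where the argument breaks. The statement is \emph{not} equivalent to $c^{-1}z\le F(z)\le c(z+1)$ with an unspecified constant, because the same constant $c_{\rm t}^{-1}$ (resp.\ $c_{\rm n}^{-1}$) appears in both the lower and the upper bound.

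Letting $\epsilon\abs{k}\to\infty$ pins that constant to the exact asymptotic slope. With $z=2\pi\epsilon\abs{k}$, your expansion $F_{\rm t}(z)=4\pi z+O(1)$ forces $c_{\rm t}^{-1}=8\pi^2$. The analogous computation gives $F_{\rm n}(z)=3\pi z+O(1)$, which forces $c_{\rm n}^{-1}=6\pi^2$. So the lemma asserts $4\pi z\le F_{\rm t}(z)\le 4\pi z+8\pi^2$ and $3\pi z\le F_{\rm n}(z)\le 3\pi z+6\pi^2$ for all $z>0$. In words, each inverse symbol lies above its own asymptote for every $z$.

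This is what makes the lemma a refinement of Lemma \ref{lem:multiplierbds}, whose high-wavenumber part is essentially the weaker statement you set out to prove. It is also exactly what \eqref{eq:R1diff} uses. There one needs $0\le c_{\rm n}\epsilon^{-1}-\abs{k}m_\epsilon^{\rm n}(k)\le c_{\rm n}\epsilon^{-1}(\epsilon\abs{k}+1)^{-1}$, which is a restatement of the matched-constant bound. With mismatched constants, $T_{m_\epsilon^{\rm n}}\p_s^4-c\,\epsilon^{-1}\p_s^3$ remains third order, and Lemma \ref{lem:op_decomp} would not produce a lower-order remainder.

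\paragraph{Why your method cannot reach it.} On $[z_0,z_1]$, compactness yields \emph{some} constant, never the inequality $F_{\rm t}(z)\ge 4\pi z$ with the slope fixed in advance. What is needed is a genuine Bessel-function inequality valid for all $z>0$.
\begin{itemize}
\item Given positivity of the denominator, $F_{\rm t}(z)\ge 4\pi z$ is equivalent to $K_1^2-2K_0K_1+z(K_1^2-K_0^2)\ge 0$, i.e.\ $K_0/K_1\le(\sqrt{z^2+z+1}-1)/z$.
\item $F_{\rm n}(z)\ge 3\pi z$ is equivalent to an analogous cubic inequality in $K_0,K_1,K_2$.
\end{itemize}

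Even the large-$z$ end is more delicate than your sketch suggests. The $O(1)$ term in $F_{\rm t}(z)=4\pi z+O(1)$ actually vanishes: $F_{\rm t}(z)-4\pi z=\frac{3\pi}{2z}+O(z^{-2})$. Its sign therefore appears only after you carry the Hankel expansions of $K_0,K_1$ through their $z^{-3}$ coefficients. For the normal symbol, $F_{\rm n}(z)-3\pi z\to\frac{3\pi}{2}$, which already requires the second-order coefficients.

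On the intermediate range you need an actual argument, for example sharp bounds on the ratio $K_0/K_1$, Tur\'an-type inequalities, or rigorously verified numerics. Your discussion of positivity of the denominators is necessary, but it does not address this step.

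The upper bounds also carry a prescribed additive constant ($8\pi^2$ and $6\pi^2$). So a bare compactness bound on $F_{\rm t}(z)-4\pi z$ does not give them as stated, although any additive constant would suffice for the application.
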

For low wavenumbers, we note the following refined asymptotics, which will allow us to prove the convergence result of Theorem \ref{thm:RFTconv}. 
\begin{lemma}[Low wavenumber refinement]\label{lem:lowk_refine}
  For $0<\epsilon<1$ and for $0< \abs{k}<\frac{1}{2\pi\epsilon}$, we have the refined low wavenumber bounds
\begin{equation}\label{eq:lowbd}
  m_\epsilon^{\rm t}(k) - \frac{\abs{\log\epsilon}}{2\pi} \le c(1+\abs{\log \abs{k}})\,,
  \quad
  m_\epsilon^{\rm n}(k) - \frac{\abs{\log\epsilon}}{4\pi} \le c(1+\abs{\log \abs{k}})\,,
\end{equation}
along with $m_\epsilon^{\rm t}(0) - \frac{\abs{\log\epsilon}}{2\pi}=m_\epsilon^{\rm n}(0) - \frac{\abs{\log\epsilon}}{4\pi}=0$.
\end{lemma}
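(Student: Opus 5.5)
The plan is to expand the Bessel functions for small argument. Write $z=2\pi\epsilon\abs{k}$, so that $0<z<1$ in the range $0<\abs{k}<\frac{1}{2\pi\epsilon}$. We use the standard small-argument expansions
\[
K_0(z)=-\log(z/2)-\gamma+O(z^2\abs{\log z}),\quad K_1(z)=\frac1z+O(z\abs{\log z}),\quad K_2(z)=\frac{2}{z^2}-\frac12+O(z^2\abs{\log z}),
\]
valid uniformly on $(0,1]$. Substituting these into \eqref{eq:multT} and \eqref{eq:multN} and keeping only leading terms will produce an expression of the form $\frac{1}{2\pi}K_0(z)+O(1)$ for $m_\epsilon^{\rm t}$ and $\frac{1}{4\pi}K_0(z)+O(1)$ for $m_\epsilon^{\rm n}$. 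Here the $O(1)$ term is bounded uniformly for $z\in(0,1]$. Once this is established, the conclusion follows from
\[
K_0(z)\le -\log z+c=\abs{\log\epsilon}-\log(2\pi\abs{k})+c\le\abs{\log\epsilon}+c(1+\abs{\log\abs{k}}).
\]

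For the tangential multiplier, I will factor $K_1^2$ out of the numerator. Since $K_1^{-1}=z+O(z^3\abs{\log z})$, this gives
\[
m_\epsilon^{\rm t}=\frac{1}{4\pi}\Big(\frac{2K_0}{zK_1}+K_0^2/K_1^2-1\Big).
\]
Now $zK_1=1+O(z^2\abs{\log z})$ and $K_0^2/K_1^2=O(z^2\log^2 z)$. Hence $m_\epsilon^{\rm t}=\frac{1}{2\pi}K_0(z)+O(1)$, with the error bounded on $(0,1]$.

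For the normal multiplier, I will do the same thing after dividing numerator and denominator by $K_1^2K_2$. The denominator becomes $4\pi^2 z\big(4+ z(K_1/K_2-K_0/K_1)\cdot\frac{1}{K_1}\cdots\big)$. Rather than track this exactly, I will only use the scalings $K_1\sim 1/z$ and $K_2\sim 2/z^2$. These give $z^2K_2\to2$, $zK_1\to1$, and $zK_0\to0$ with an explicit error. It follows that the denominator equals $4\pi^2 z\cdot 4K_1^2K_2\,(1+O(z^2\abs{\log z}))$. In the numerator, $2K_0K_1K_2$ is the leading term, while $z(K_1^2(K_0+K_2)-2K_0^2K_2)$ contributes $zK_1^2K_2(1+O(z^2\log^2 z))$. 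Dividing, I get
\[
m_\epsilon^{\rm n}=\frac{2K_0}{16\pi^2 zK_1}+\frac{1}{16\pi^2}+\text{bounded}=\frac{K_0}{8\pi^2}\cdot\frac{1}{zK_1}\cdot 2\pi\cdots
\]
I will have to check the constants carefully, with the target being the coefficient $\frac{1}{4\pi}$ on $K_0$. That target is consistent with Lemma \ref{lem:multiplierbds} and with the normal drag weighting in \eqref{eq:LRFT}.

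The main obstacle is this bookkeeping. I need to confirm that the leading coefficient of $K_0$ is exactly $\frac{1}{2\pi}$ (respectively $\frac1{4\pi}$), since any mismatch would leave an $O(\abs{\log\epsilon})$ error. I also need all remainders to stay bounded uniformly up to $z=1$. Near $z=1$ no expansion is needed, because all $K_j$ are smooth and positive on $[z_0,1]$, so the multipliers and $\abs{\log\epsilon}-\log z$ are uniformly comparable there. Finally, only an upper bound is required in \eqref{eq:lowbd}, so the sign of the $O(1)$ terms does not matter. The $k=0$ statement is immediate from \eqref{eq:multN0}.
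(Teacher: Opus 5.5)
Your approach is the paper's: expand $m_\epsilon^{\rm t}$ and $m_\epsilon^{\rm n}$ in $z=2\pi\epsilon\abs{k}$ using the small-argument Bessel asymptotics, then subtract the regularized constants. Your tangential computation, giving $m_\epsilon^{\rm t}=\frac{K_0(z)}{2\pi}-\frac{1}{4\pi}+O(z^2\log^2 z)$, is correct. It yields $-2\log\pi$ where the paper's displayed expansion has $+2\log\pi$; that is a harmless typo in the paper, since the constant plays no role.

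The normal case, which you left open, has one concrete slip. The prefactor in \eqref{eq:multN} is $4\pi^2\epsilon\abs{k}=2\pi z$, not $4\pi^2 z$; you used the correct conversion $8\pi^2\epsilon\abs{k}=4\pi z$ in the tangential case. With this corrected, dividing numerator and denominator by $K_1^2K_2$ gives exactly the structure you were sketching:
\[
m_\epsilon^{\rm n}=\frac{\frac{2K_0}{zK_1}+1+\frac{K_0}{K_2}-\frac{2K_0^2}{K_1^2}}{2\pi\big(4+z\big(\frac{K_1}{K_2}-\frac{K_0}{K_1}\big)\big)}=\frac{K_0(z)}{4\pi}+\frac{1}{8\pi}+O(z^2\log^2 z)\quad\text{as } z\to 0\,.
\]
This holds because $zK_1/K_2$, $zK_0/K_1$ and $K_0/K_2$ are $O(z^2\abs{\log z})$, and $K_0^2/K_1^2=O(z^2\log^2 z)$. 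So the coefficient of $K_0$ is exactly $\frac{1}{4\pi}$; your displayed $\frac{2K_0}{16\pi^2 zK_1}$ would have produced $\frac{1}{8\pi^2}$. The corrected expansion agrees with the paper's $\frac{1-2\gamma-2\log\pi-2\log(\epsilon\abs{k})}{8\pi}$, and with it, plus your compactness remark on $[z_0,1]$, the proof is complete.

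You remarked that only an upper bound is needed. That is true for \eqref{eq:lowbd} as stated, but the lemma is used with absolute values in the proof of Lemma \ref{lem:mult_RFT_diff}. Your argument gives the two-sided bound $\abs{m_\epsilon^{\rm t}(k)-\frac{\abs{\log\epsilon}}{2\pi}}\le c(1+\log\abs{k})$ at no extra cost if you use $K_0(z)=-\log z+O(1)$ rather than only $K_0(z)\le-\log z+c$. This two-sided use is exactly why the leading coefficient must be exact. A smaller coefficient, such as the $\frac{1}{8\pi^2}$ from the slip, would still satisfy the one-sided inequality but would leave an $O(\abs{\log\epsilon})$ error in the two-sided one.
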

In particular, this difference is bounded independent of $\epsilon$ at low wavenumbers.

\begin{proof}[Proof of Lemma \ref{lem:lowk_refine}]
Expanding $m_\epsilon^{\rm t}(k)$ and $m_\epsilon^{\rm n}(k)$ in a Taylor series about $\abs{k}=0$ yields 
\begin{equation}
\begin{aligned}
  m_\epsilon^{\rm t}(k) &= \frac{-1-2\gamma +2\log\pi -2\log(\epsilon \abs{k})}{4\pi} +O(\epsilon^2k^2\log^2(\epsilon |k|)) \\
  m_\epsilon^{\rm n}(k) &= \frac{1 - 2\gamma - 2\log\pi - 2\log(\epsilon \abs{k})}{8\pi} +O(\epsilon^2k^2\log^2(\epsilon |k|))\,.
\end{aligned}
\end{equation}
Here $\gamma\approx 0.5772$ is the Euler gamma. Subtracting $\frac{\abs{\log\epsilon}}{2\pi}$ from $m_\epsilon^{\rm t}(k)$ and $\frac{\abs{\log\epsilon}}{4\pi}$ from $m_\epsilon^{\rm n}(k)$ yields remainder terms satisfying \eqref{eq:lowbd}.
\end{proof}

\subsection{Energetics}\label{subsec:energetics}
The filament evolution \eqref{eq:evolution} admits the following energy identity: 
\begin{equation}\label{eq:energyID}
\frac{1}{2}\p_t\int_\T \abs{\X_{ss}}^2\,ds = -\int_{\T}(\X_{sss}-\tau\X_s)_s\cdot\overline{\mc{L}}_\epsilon[(\X_{sss}-\tau\X_s)_s]\,ds\,,
\end{equation}
which may be obtained by dotting equation \eqref{eq:evolution} with $(\X_{sss}-\tau\X_s)_s$ and integrating over $\T$. Noting that the inextensibility constraint $\abs{\X_s}^2=1$ implies $\p_t\X_s\cdot\X_s=0$, we have  
\begin{equation}
\begin{aligned}
  \int_\T\frac{\p\X}{\p t}\cdot(\X_{sss}-\tau\X_s)_s\,ds &= -\int_\T\frac{\p\X_s}{\p t}\cdot(\X_{sss}-\tau\X_s)\,ds\\
  &= -\int_\T\frac{\p\X_s}{\p t}\cdot\X_{sss}\,ds = \frac{1}{2}\p_t\int_\T \abs{\X_{ss}}^2\,ds\,,
\end{aligned}
\end{equation}
yielding the left hand side of \eqref{eq:energyID}.

To see that the right hand side of \eqref{eq:energyID} is signed, we make note of the following proposition. 
\begin{proposition}\label{prop:positivity}
  Given $\bm{f}\in H^{-1/2}(\T)$, we have 
  \begin{equation}
  \begin{aligned}
    \int_\T \bm{f}\cdot\overline{\mc{L}}_\epsilon[\bm{f}]\,ds &\ge c\abs{\log\epsilon}\big(\norm{\P_{\X_s}\bm{f}}_{H^{-1/2}(\T)}^2 + \|\P_{\X_s}^\perp\bm{f}\|_{H^{-1/2}(\T)}^2\big)\\
    &\ge c\abs{\log\epsilon}\norm{\bm{f}}_{H^{-1/2}(\T)}^2 \,.
  \end{aligned}
  \end{equation}
\end{proposition}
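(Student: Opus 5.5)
The plan is to use the self-adjointness of the projections $\P_{\X_s}$ and $\P_{\X_s}^\perp$ to reduce the quadratic form to two scalar-multiplier quadratic forms, and then apply Plancherel together with lower bounds on the multipliers. Since $\P_{\X_s}$ is pointwise an orthogonal projection (symmetric on $\R^3$), we have $\int_\T \bm{f}\cdot\P_{\X_s}\bm{g}\,ds = \int_\T \P_{\X_s}\bm{f}\cdot\bm{g}\,ds$, and similarly for $\P_{\X_s}^\perp$. Writing $\bm{f}^{\rm t}=\P_{\X_s}\bm{f}$ and $\bm{f}^{\rm n}=\P_{\X_s}^\perp\bm{f}$, this gives
\begin{equation*}
\int_\T \bm{f}\cdot\overline{\mc{L}}_\epsilon[\bm{f}]\,ds = \int_\T \bm{f}^{\rm t}\cdot T_{m_\epsilon^{\rm t}}\bm{f}^{\rm t}\,ds + \int_\T \bm{f}^{\rm n}\cdot T_{m_\epsilon^{\rm n}}\bm{f}^{\rm n}\,ds\,.
\end{equation*}
(For $\bm{f}\in H^{-1/2}$ the projections are multiplications by $\X_s\otimes\X_s$. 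With $\X\in H^2$ we have $\X_s\in H^1$, which is an algebra acting boundedly on $H^{-1/2}(\T)$ by duality with $H^{1/2}$. So all pairings make sense, and the identities may be verified for smooth $\bm{f}$ and extended by density.)

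Since the multipliers are real and even in $k$, Plancherel gives
\begin{equation*}
\int_\T \bm{g}\cdot T_m\bm{g}\,ds = \sum_k m(k)\abs{\wh{\bm{g}}(k)}^2\,.
\end{equation*}
It then suffices to show $m(k)\ge c\abs{\log\epsilon}(1+\abs{k})^{-1}$ for all $k$ and for $m=m_\epsilon^{\rm t},m_\epsilon^{\rm n}$. For $k=0$ this is immediate from \eqref{eq:multN0}. For $k\ne0$ we use the second line of Lemma \ref{lem:multiplierbds}, which bounds the inverse multipliers from above.
\begin{itemize}
\item For $0<\abs{k}<\frac{1}{2\pi\epsilon}$ we get $m\ge c\abs{\log\epsilon}\ge c\abs{\log\epsilon}(1+\abs{k})^{-1}$.
\item For $\abs{k}\ge\frac{1}{2\pi\epsilon}$ we get $m\ge c\,\epsilon^{-1}\abs{k}^{-1}$. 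Since $\epsilon^{-1}\ge\abs{\log\epsilon}$ for $0<\epsilon<1$, this is $\ge c\abs{\log\epsilon}\abs{k}^{-1}$.
\end{itemize}
This yields the first inequality. The one step that needs care is the high-wavenumber regime, where $\epsilon^{-1}$ must dominate $\abs{\log\epsilon}$. This holds trivially, but it means the constant is uniform only for $\epsilon$ bounded away from $1$, since $\abs{\log\epsilon}\to0$ as $\epsilon\to1$. I would therefore state the result for $0<\epsilon\le\epsilon_0<1$, where $\abs{\log\epsilon}$ is bounded below.

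For the second inequality, note that $\bm{f}=\bm{f}^{\rm t}+\bm{f}^{\rm n}$, so
\begin{equation*}
\norm{\bm{f}}_{H^{-1/2}}^2\le 2\big(\norm{\bm{f}^{\rm t}}_{H^{-1/2}}^2+\norm{\bm{f}^{\rm n}}_{H^{-1/2}}^2\big)
\end{equation*}
by the triangle inequality, and the second line of the proposition follows by adjusting $c$.
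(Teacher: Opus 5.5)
Your proposal is correct and follows the paper's proof. You symmetrize using the pointwise-orthogonal projections, apply Parseval, use the inverse-multiplier bounds of Lemma \ref{lem:multiplierbds} separately on $\abs{k}<\frac{1}{2\pi\epsilon}$ and $\abs{k}\ge\frac{1}{2\pi\epsilon}$, and finish with the triangle inequality, exactly as the paper does. Your restriction to $\epsilon\le\epsilon_0<1$ is unnecessary: since $\epsilon\abs{\log\epsilon}\le e^{-1}$, we have $\epsilon^{-1}\ge e\abs{\log\epsilon}$ uniformly on $(0,1)$, and a right-hand side that vanishes as $\epsilon\to1$ only makes the bound easier.
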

We note that this lower bound is a bit pessimistic (in $\epsilon$) due to the differing $\epsilon$-scaling of the multipliers at low versus high wavenumbers (Lemma \ref{lem:multiplierbds}); however, it will be sufficient for establishing local and global well-posedness.

\begin{proof}
Using Definition \ref{def:Lepsbar} for $\overline{\mc{L}}_\epsilon$ along with Parseval's theorem, we may write 
\begin{equation}
\begin{aligned}
  \int_\T \bm{f}\cdot\overline{\mc{L}}_\epsilon[\bm{f}]\,ds &= \int_\T \bigg((\P_{\X_s}\bm{f})\cdot T_{m_\epsilon^{\rm t}}(\P_{\X_s}\bm{f}) + (\P_{\X_s}^\perp\bm{f})\cdot T_{m_\epsilon^{\rm n}}(\P_{\X_s}^\perp\bm{f}) \bigg)\, ds  \\
  &= \sum_{\abs{k}=0}^\infty \abs{\mc{F}[\P_{\X_s}\bm{f}](k)}^2m_\epsilon^{\rm t}(k) + \abs{\mc{F}[\P_{\X_s}^\perp\bm{f}](k)}^2m_\epsilon^{\rm n}(k) \\
  &\ge c\abs{\log\epsilon}\sum_{\abs{k}=0}^{\floor{\frac{1}{2\pi\epsilon}}} \abs{\mc{F}[\P_{\X_s}\bm{f}](k)}^2 + \abs{\mc{F}[\P_{\X_s}^\perp\bm{f}](k)}^2 \\
  &\quad + c\abs{\log\epsilon}\sum_{\abs{k}=\floor{\frac{1}{2\pi\epsilon}}+1}^{\infty} \abs{k}^{-1}\bigg(\abs{\mc{F}[\P_{\X_s}\bm{f}](k)}^2 + \abs{\mc{F}[\P_{\X_s}^\perp\bm{f}](k)}^2\bigg)\\
  &\ge c\abs{\log\epsilon}\big(\norm{\P_{\X_s}\bm{f}}_{H^{-1/2}(\T)}^2 + \|\P_{\X_s}^\perp\bm{f}\|_{H^{-1/2}(\T)}^2\big)\\
  &\ge c\abs{\log\epsilon}\norm{\bm{f}}_{H^{-1/2}(\T)}^2 \,,
\end{aligned}
\end{equation}
by the triangle inequality. Here in the third line we have used the behavior of the multipliers $m_\epsilon^{\rm t}$ and $m_\epsilon^{\rm n}$ from Lemma \ref{lem:multiplierbds}.
\end{proof}

Using Proposition \ref{prop:positivity}, the filament bending energy is non-increasing in time; in particular, 
\begin{equation}\label{eq:energy_ineq}
\begin{aligned}
\frac{1}{2}\p_t\int_\T \abs{\X_{ss}}^2\,ds &\le -c\abs{\log\epsilon}\norm{(\X_{sss}-\tau\X_s)_s}_{H^{-1/2}(\T)}^2 \\
&\le -c\abs{\log\epsilon}\norm{\X_{sss}-\tau\X_s}_{\dot H^{1/2}(\T)}^2\,.
\end{aligned}
\end{equation}
The energy inequality \eqref{eq:energy_ineq} will form the basis for our proof of global well-posedness.


\section{Global well-posedness}\label{sec:GWP}
This section is devoted to the proof of Theorem \ref{thm:GWP}. We begin with bounds for the tension $\tau(s,t)$ appearing as a Lagrange multiplier in \eqref{eq:evolution}. These bounds are leveraged to show local well-posedness in the energy space, which is then upgraded to global well-posedness using the energy inequality~\eqref{eq:energy_ineq}.

\subsection{Preliminaries}
Here we collect some identities and bounds that will be used throughout the following sections. We begin with the observation that, by differentiating the inextensibility constraint $\abs{\X_s}^2=1$, we obtain the following identities:
\begin{equation}\label{eq:inex_ids}
  \X_s\cdot\X_{ss}=0\,, \quad 
  \abs{\X_{ss}}^2 =-\X_s\cdot\X_{sss}\,, \quad
  3\X_{sss}\cdot\X_{ss} = -\X_s\cdot\X_{ssss}\,.
\end{equation}

Next, we make note of some useful interpolation inequalities on $\T$. Given $h:\T\to \R$ with $\int_\T h\,ds=0$, we may bound
\begin{equation}\label{eq:interps}
\begin{aligned}
  \norm{h}_{\dot H^{1/2}}&\le c\norm{\p_sh}_{L^2}^{1/2}\norm{h}_{L^2}^{1/2}\,,\\
  \norm{h_s}_{L^2}&\le c\norm{h_s}_{\dot H^{1/2}}^{2/3}\norm{h}_{L^2}^{1/3}\,,\\
  \norm{h}_{L^\infty} &\le c\norm{h_s}_{\dot H^{1/2}}^{1/3}\norm{h}_{L^2}^{2/3}\,.
\end{aligned}
\end{equation}
The proof of these may be found, for example, in \cite{brezis2018gagliardo}.

Finally, we will require the following proposition. 
\begin{proposition}\label{prop:H_minus_halfbd}
Given scalar-valued $h\in H^{-1/2}(\T)$ and vector-valued $\bv\in H^1(\T)$ with $\abs{\bv}=1$, we may bound
  \begin{equation}\label{eq:Hminushalf1}
  \norm{h}_{H^{-1/2}(\T)} \le c\norm{\bv}_{H^1(\T)}\norm{h \bv}_{H^{-1/2}(\T)}\,.
\end{equation}
In addition, we may bound
  \begin{equation}\label{eq:Hminushalf2}
  \norm{h\bm{v}}_{H^{-1/2}(\T)} \le c\norm{\bv}_{H^1(\T)}\norm{h}_{H^{-1/2}(\T)}\,.
\end{equation}
\end{proposition}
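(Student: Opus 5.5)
Both estimates will follow from one multiplier bound proved by duality: multiplication by $\bv\in H^1(\T)$ (or by its components) is bounded on $H^{1/2}(\T)$, and hence, by duality, on $H^{-1/2}(\T)$. I will establish this bound first.

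\emph{Multiplier bound.} On $\T$ we have $H^1\subset L^\infty$, with $\norm{\bv}_{L^\infty}\le c\norm{\bv}_{H^1}$. For $\phi\in H^{1/2}(\T)$, I would bound $\norm{\bv\phi}_{H^{1/2}}$ by interpolating between $L^2$ and $H^1$. Take the operator $M_{\bv}:\phi\mapsto \bv\phi$. It satisfies
\begin{equation*}
  \norm{M_{\bv}\phi}_{L^2}\le \norm{\bv}_{L^\infty}\norm{\phi}_{L^2}\,,
\end{equation*}
\begin{equation*}
  \norm{M_{\bv}\phi}_{H^1}\le \norm{\bv}_{L^\infty}\norm{\phi}_{H^1}+\norm{\bv_s}_{L^2}\norm{\phi}_{L^\infty}\le c\norm{\bv}_{H^1}\norm{\phi}_{H^1}\,.
\end{equation*}
Complex interpolation then gives $\norm{\bv\phi}_{H^{1/2}}\le c\norm{\bv}_{H^1}\norm{\phi}_{H^{1/2}}$. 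Alternatively, one can estimate the Gagliardo seminorm directly: split $\bv(s)\phi(s)-\bv(s')\phi(s')$, and use $|\bv(s)-\bv(s')|\le \norm{\bv_s}_{L^2}|s-s'|^{1/2}$ together with $\phi\in L^2$. Since $H^{-1/2}$ is the dual of $H^{1/2}$, multiplication by $\bv$ (componentwise, or dotting with $\bv$) is bounded on $H^{-1/2}$ with the same constant.

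\emph{Estimate \eqref{eq:Hminushalf2}.} This follows immediately. For a test function $\bm{\phi}\in H^{1/2}$ we have
\begin{equation*}
  \langle h\bv,\bm{\phi}\rangle=\langle h,\bv\cdot\bm{\phi}\rangle\le\norm{h}_{H^{-1/2}}\norm{\bv\cdot\bm{\phi}}_{H^{1/2}}\le c\norm{\bv}_{H^1}\norm{h}_{H^{-1/2}}\norm{\bm{\phi}}_{H^{1/2}}\,.
\end{equation*}

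\emph{Estimate \eqref{eq:Hminushalf1}.} This is where the constraint $|\bv|=1$ enters: it gives $h=(h\bv)\cdot\bv$. For scalar $\phi\in H^{1/2}$,
\begin{equation*}
  \langle h,\phi\rangle=\langle h\bv,\phi\bv\rangle\le\norm{h\bv}_{H^{-1/2}}\norm{\phi\bv}_{H^{1/2}}\le c\norm{\bv}_{H^1}\norm{h\bv}_{H^{-1/2}}\norm{\phi}_{H^{1/2}}\,,
\end{equation*}
and taking the supremum over $\phi$ gives \eqref{eq:Hminushalf1}.

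The main point needing care is the $H^{1/2}$ multiplier bound with the linear dependence on $\norm{\bv}_{H^1}$. The interpolation argument above supplies exactly this, since both endpoint bounds are linear in $\norm{\bv}_{H^1}$.
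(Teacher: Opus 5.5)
Your proof is correct and is the paper's own duality argument: both estimates come from testing against $\varphi$ or $\bm{\varphi}$, and $h=(h\bv)\cdot\bv$ gives \eqref{eq:Hminushalf1}; in addition, you justify the multiplier bound $\norm{\bv\phi}_{H^{1/2}}\le c\norm{\bv}_{H^1}\norm{\phi}_{H^{1/2}}$ by interpolating between $L^2$ and $H^1$, which the paper uses without comment. Drop the ``alternative'' Gagliardo sketch, though: the global bound $\abs{\bv(s)-\bv(s')}\le\norm{\bv_s}_{L^2}\abs{s-s'}^{1/2}$ with only $\phi\in L^2$ leaves the divergent integral $\int_\T\abs{s-s'}^{-1}\,ds$, and closing that route needs the localized bound $\abs{\bv(s)-\bv(s')}^2\le\abs{s-s'}\int_{s'}^{s}\abs{\bv_s}^2\,dr$ together with the embedding $H^{1/2}\subset L^p$ for some $p>2$.
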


\begin{proof}
For any scalar-valued $\varphi\in H^{1/2}(\T)$ with $\norm{\varphi}_{H^{1/2}}=1$, we may calculate
\begin{equation}
\begin{aligned}
  \abs{\int_\T h\,\varphi\,ds} &= \abs{\int_\T (h\bv)\cdot(\bv\varphi)\,ds}
  \le c\norm{h\bv}_{H^{-1/2}}\norm{\bv\varphi}_{H^{1/2}}\\
  &\le c\norm{h\bv}_{H^{-1/2}}\norm{\bv}_{H^1}\norm{\varphi}_{H^{1/2}}
  = c\norm{h\bv}_{H^{-1/2}}\norm{\bv}_{H^1}\,.
\end{aligned}
\end{equation}
Taking the supremum over all such $\varphi$ yields \eqref{eq:Hminushalf1}.

Similarly, taking any vector-valued $\bm{\varphi}\in H^{1/2}(\T)$ with $\norm{\bm{\varphi}}_{H^{1/2}}=1$, we may calculate 
\begin{equation}
\begin{aligned}
  \abs{\int_\T h\bv\cdot\bm{\varphi} \,ds}
  &\le \norm{h}_{H^{-1/2}}\norm{\bv\cdot\bm{\varphi}}_{H^{1/2}}\\
  &\le c\norm{h}_{H^{-1/2}}\norm{\bv}_{H^1}\norm{\bm{\varphi}}_{H^{1/2}}
  = c\norm{h}_{H^{-1/2}}\norm{\bv}_{H^1}\,.
\end{aligned}
\end{equation}
Again taking the supremum over all such $\bm{\varphi}$ yields \eqref{eq:Hminushalf2}.
\end{proof}

\subsection{Tension estimates}\label{subsec:tension}
As in the analogous tension determination problems of \cite{mori2023well,kuo2023tension,garcia2025immersed,albritton2025rods,ohm2025slender}, we will use that $\p_t\X_s\cdot\X_s=0$ to obtain an equation and subsequent estimates for the tension $\tau$. Differentiating \eqref{eq:evolution} in $s$ and dotting with $\X_s$, we have that at each time $t$ (which we will omit from our notation throughout this section), $\tau$ satisfies the equation 
\begin{equation}\label{eq:TDP}
\big(\overline{\mc{L}}_\epsilon[(\tau\X_s)_s]\big)_s\cdot\X_s = \big(\overline{\mc{L}}_\epsilon[\X_{ssss}]\big)_s\cdot\X_s\,.
\end{equation}
We show the following.
\begin{lemma}[Tension bounds]\label{lem:TDP}
  Given $\epsilon>0$ and $\X\in H^{7/2}(\T)$, the tension equation \eqref{eq:TDP} admits a unique solution $\tau\in H^{1/2}(\T)$. Furthermore, $\tau$ satisfies the bound 
\begin{equation}\label{eq:tau_bound}
  \norm{\tau}_{H^{1/2}}\le c\,\epsilon^{-1} (1+\norm{\X_s}_{H^1}^4)\big(\norm{\X_{sss}}_{\dot H^{1/2}}^{3/4}\norm{\X_s}_{H^1}^{5/4}+ \norm{\X_{sss}}_{\dot H^{1/2}}^{2/3}\norm{\X_s}_{H^1}^{10/3}\big)\,.
\end{equation}
Given two nearby curves $\X_1,\X_2\in H^{7/2}(\T)$, the solutions $\tau_1$, $\tau_2$ to the corresponding tension determination problems \eqref{eq:TDP} satisfy 
\begin{equation}\label{eq:tau_lip}
\begin{aligned}
  \norm{\tau_1-\tau_2}_{H^{1/2}} 
  &\le 
  c\,\epsilon^{-1}(1+\norm{(\X_1)_s}_{H^1}^9+\norm{(\X_2)_s}_{H^1}^9)\times\\
  &\quad \times \bigg[\norm{(\X_1-\X_2)_s}_{H^1}\bigg(\norm{(\X_1)_{sss}}_{\dot H^{1/2}}^{2/3} + \norm{(\X_2)_{sss}}_{\dot H^{1/2}}^{2/3} + \norm{(\X_1)_{sss}}_{\dot H^{1/2}}^{3/4}\bigg) \\
  &\qquad + \norm{(\X_1-\X_2)_{sss}}_{\dot H^{1/2}}^{1/3}\norm{(\X_1-\X_2)_s}_{H^1}^{2/3}(\norm{(\X_1)_{sss}}_{\dot H^{1/2}}^{1/3}+\norm{(\X_2)_{sss}}_{\dot H^{1/2}}^{1/3}) \\
  &\qquad  + \norm{(\X_1-\X_2)_{sss}}_{\dot H^{1/2}}^{3/4}\norm{(\X_1-\X_2)_s}_{H^1}^{1/4} \bigg] \,.
\end{aligned}
\end{equation}
\end{lemma}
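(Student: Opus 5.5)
Testing the tension equation \eqref{eq:TDP} against functions of $s$ turns it into a weak elliptic problem for $\tau$ posed on $H^{1/2}(\T)$, which I would solve by Lax--Milgram. For a scalar $\varphi$, multiply \eqref{eq:TDP} by $\varphi$ and integrate by parts, so that the equation reads
\begin{equation*}
B(\tau,\varphi):=\int_\T (\varphi\X_s)_s\cdot\overline{\mc{L}}_\epsilon[(\tau\X_s)_s]\,ds
= \int_\T (\varphi\X_s)_s\cdot\overline{\mc{L}}_\epsilon[\X_{ssss}]\,ds=:F(\varphi)\,,
\end{equation*}
using that $\overline{\mc{L}}_\epsilon$ is symmetric on $L^2$ (it is built from self-adjoint multipliers and orthogonal projections). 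The natural space is $\tau\in H^{1/2}$, because then $(\tau\X_s)_s\in H^{-1/2}$ and $\overline{\mc{L}}_\epsilon$ pairs $H^{-1/2}$ with itself boundedly. Indeed, by Lemma \ref{lem:multiplierbds}, $m_\epsilon\lesssim \epsilon^{-1}\langle k\rangle^{-1}$ uniformly (using $\abs{\log\epsilon}\lesssim\epsilon^{-1}$). Continuity of $B$ then follows from \eqref{eq:Hminushalf2} and from $\P_{\X_s}$ being bounded on $H^{\pm1/2}$ with constant $\lesssim\norm{\X_s}_{H^1}$, giving $|B(\tau,\varphi)|\lesssim\epsilon^{-1}(1+\norm{\X_s}_{H^1})^{c}\norm{\tau}_{H^{1/2}}\norm{\varphi}_{H^{1/2}}$.

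Coercivity comes from Proposition \ref{prop:positivity}: $B(\tau,\tau)\ge c\abs{\log\epsilon}\,\norm{\P_{\X_s}(\tau\X_s)_s}_{H^{-1/2}}^2$. Since $\X_s\cdot\X_{ss}=0$, we have $\P_{\X_s}(\tau\X_s)_s=\tau_s\X_s$, and \eqref{eq:Hminushalf1} gives $\norm{\tau_s}_{H^{-1/2}}\lesssim\norm{\X_s}_{H^1}\norm{\tau_s\X_s}_{H^{-1/2}}$. This controls $\tau$ only modulo constants, so the mean is recovered from the normal part. We have $\P^\perp_{\X_s}(\tau\X_s)_s=\tau\X_{ss}$, and $\int\tau|\X_{ss}|^2$ with $|\X_{ss}|\not\equiv0$ on a closed curve (by Fenchel, $\int|\X_{ss}|\ge2\pi$) pins down $\bar\tau$. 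Concretely, write $\tau=\bar\tau+\tilde\tau$. Then $\norm{\tau\X_{ss}}_{H^{-1/2}}\ge|\bar\tau|\,\norm{\X_{ss}}_{H^{-1/2}}-\norm{\tilde\tau\X_{ss}}_{H^{-1/2}}$, and $\norm{\X_{ss}}_{H^{-1/2}}$ is bounded below using Fenchel and interpolation against $\norm{\X_s}_{H^1}$. This yields $B(\tau,\tau)\gtrsim\abs{\log\epsilon}(1+\norm{\X_s}_{H^1})^{-C}\norm{\tau}_{H^{1/2}}^2$, and the polynomial loss explains the factor $(1+\norm{\X_s}_{H^1}^4)$. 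I expect this coercivity step, specifically controlling the zero mode with a clean power of $\norm{\X_s}_{H^1}$, to be the main obstacle. If the lower bound on $\norm{\X_{ss}}_{H^{-1/2}}$ is awkward, I would instead test the normal part against $\X_{ss}$ and use $\norm{\X_{ss}}_{L^2}\ge 2\pi$ together with duality.

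For the forcing, I would use the inextensibility identities \eqref{eq:inex_ids} so as not to waste derivatives. The leading part $\X_{ssss}$ lies only in $H^{-1/2}$, so the naive bound gives $|F(\varphi)|\lesssim \epsilon^{-1}\norm{\varphi}_{H^{1/2}}\norm{\X_s}_{H^1}\norm{\X_{sss}}_{\dot H^{1/2}}$. To reach the sublinear powers $3/4$ and $2/3$, I would split $\X_{ssss}$ into tangential and normal parts. The tangential part is $(\X_s\cdot\X_{ssss})\X_s=-3(\X_{sss}\cdot\X_{ss})\X_s$, a lower-order product, which I would estimate with the interpolations \eqref{eq:interps}; this produces $\norm{\X_{sss}}_{\dot H^{1/2}}^{2/3}\norm{\X_s}_{H^1}^{10/3}$-type terms. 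In the normal part, pairing with $\P^\perp(\varphi\X_s)_s=\varphi\X_{ss}$ removes the derivative on $\varphi$. I would then move half a derivative through the operator, bound $\norm{\varphi\X_{ss}}_{H^{1/2}}$ using $\norm{\X_{ss}}_{H^{1/2}}\lesssim\norm{\X_{sss}}_{\dot H^{1/2}}^{1/4}\norm{\X_{ss}}_{L^2}^{3/4}$, and combine this with $\X_{ssss}$ in $H^{-1}$-like norms; this gives the $3/4$ power. Dividing by the coercivity constant yields \eqref{eq:tau_bound}, and uniqueness follows from Lax--Milgram.

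For \eqref{eq:tau_lip}, I would subtract the two weak formulations to get
\begin{equation*}
B_1(\tau_1-\tau_2,\varphi)=[F_1-F_2](\varphi)-[B_1-B_2](\tau_2,\varphi).
\end{equation*}
Each difference involves $\X_1-\X_2$ either in the projections $\P_{(\X_i)_s}$ or in the forcing. I would estimate these term by term with the same tools, namely Proposition \ref{prop:H_minus_halfbd}, \eqref{eq:interps} and the tangential/normal splitting, bound $\tau_2$ by \eqref{eq:tau_bound}, and finally apply the coercivity of $B_1$. This step is routine but bookkeeping-heavy.
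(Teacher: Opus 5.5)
Your skeleton matches the paper's. Both use the symmetric form $\mc{B}_\epsilon$ and Lax--Milgram. Both get coercivity from Proposition~\ref{prop:positivity} via $\P_{\X_s}(\tau\X_s)_s=\tau_s\X_s$, $\P_{\X_s}^\perp(\tau\X_s)_s=\tau\X_{ss}$ and \eqref{eq:Hminushalf1}. Both use $\P_{\X_s}\X_{ssss}=-3(\X_{ss}\cdot\X_{sss})\X_s$ for the tangential forcing, shift part of the $\langle k\rangle^{-1}$ gain onto $\varphi\X_{ss}$ in the normal forcing, and subtract weak forms for \eqref{eq:tau_lip}. Two steps differ.

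\textbf{Mean of $\tau$.} The step you flagged as the main obstacle is easy, and your primary plan is the better route.
\begin{itemize}
\item Since $\X$ is closed and $\abs{\X_s}=1$, $\X_s$ has zero mean and unit $L^2$ norm. Hence $\norm{\X_{ss}}_{H^{-1/2}}\ge\norm{\X_{ss}}_{H^{-1}}\ge c\norm{\X_s}_{L^2}=c$, with no Fenchel and no power of $\norm{\X_s}_{H^1}$.
\item By duality and $H^{1/2}\subset L^4$ (note $H^{1/2}\not\subset L^\infty$), $\norm{\tilde\tau\X_{ss}}_{H^{-1/2}}\lesssim\norm{\tilde\tau}_{L^4}\norm{\X_{ss}}_{L^2}\lesssim\norm{\tau}_{\dot H^{1/2}}\norm{\X_{ss}}_{L^2}$.
\item Together these give exactly the first line of \eqref{eq:tau_Hhalf}.
\end{itemize}
The paper instead pairs $\tau_0\X_{ss}$ with $\X_{ss}$ and invokes Fenchel. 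Using $\X_{ss}$ as an $H^{1/2}$ test function costs a factor $\norm{\X_{ss}}_{H^{1/2}}$. That factor is not controlled by $\norm{\X_s}_{H^1}$, and the displayed chain there drops it. So do not fall back on your alternative of testing against $\X_{ss}$; it carries the same cost.

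\textbf{Normal forcing.} This is where your sketch needs repair. Your bound $\epsilon^{-1}\abs{\log\epsilon}\norm{\varphi\X_{ss}}_{H^{1/2}}\norm{\P_{\X_s}^\perp\X_{ssss}}_{H^{-3/2}}$ has three problems.
\begin{itemize}
\item $H^{1/2}(\T)$ is not an algebra, so the $\varphi$-side needs $\norm{\X_{ss}}_{H^{1/2+\delta}}$ with $\delta>0$.
\item The correct interpolation is $\norm{\X_{ss}}_{\dot H^{1/2}}\lesssim\norm{\X_{sss}}_{\dot H^{1/2}}^{1/3}\norm{\X_{ss}}_{L^2}^{2/3}$, not exponent $1/4$.
\item $\P_{\X_s}^\perp\X_{ssss}=\X_{ssss}-\P_{\X_s}\X_{ssss}$ contains a tangential piece. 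Bounded in $H^{-1/2}$ (power $2/3$) against the $\varphi$-side factor (power $1/3+2\delta/3$), it gives total power above $1$ and destroys sublinearity.
\end{itemize}
The fix has two parts. First, split off $\P_{\X_s}\X_{ssss}$ and pair it evenly, as $\norm{\varphi\X_{ss}}_{H^{-1/2}}\norm{\P_{\X_s}\X_{ssss}}_{H^{-1/2}}$. Second, take $\delta=1/8$ on the $\X_{ssss}$ piece: then $\norm{\X_{ss}}_{\dot H^{5/8}}\norm{\X_{ss}}_{\dot H^{1/2}}\lesssim\norm{\X_{sss}}_{\dot H^{1/2}}^{3/4}\norm{\X_{ss}}_{L^2}^{5/4}$, which is exactly the stated term.

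The paper's split is $\norm{\tau\X_{ss}}_{H^{-1/8}}\norm{\P_{\X_s}^\perp\X_{ssss}}_{H^{-7/8}}$, with $\norm{\tau\X_{ss}}_{H^{-1/8}}\lesssim\norm{\tau}_{L^8}\norm{\X_{ss}}_{L^2}$. This puts no top-order norm on the $\varphi$-side, so the tangential piece automatically costs only power $2/3$. In the Lipschitz estimate it produces the clean term $\norm{(\X_1-\X_2)_{sss}}_{\dot H^{1/2}}^{3/4}\norm{(\X_1-\X_2)_s}_{H^1}^{1/4}$ of \eqref{eq:tau_lip}. Your split would instead give mixed products such as $\norm{(\X_2)_{sss}}_{\dot H^{1/2}}^{5/12}\norm{(\X_1-\X_2)_{sss}}_{\dot H^{1/2}}^{1/3}$: still sublinear, but not of the stated form.

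Lastly, Lemma~\ref{lem:multiplierbds} gives only $m_\epsilon\lesssim\epsilon^{-1}\abs{\log\epsilon}\langle k\rangle^{-1}$. Near $\abs{k}\sim\epsilon^{-1}$ your $\epsilon^{-1}\langle k\rangle^{-1}$ is $O(1)$, while the lemma only gives $c\abs{\log\epsilon}$ there. The extra $\abs{\log\epsilon}$ cancels against the coercivity constant, so the final $\epsilon^{-1}$ is unchanged.
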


Note that the right hand side of \eqref{eq:tau_bound} involves less than one copy of the highest derivative of $\X$ (here we use $\norm{\X_{sss}}_{\dot H^{1/2}}^{3/4}$). In contrast, the analogous bound \eqref{eq:RFTtension1} in the resistive force theory setting involves a full copy of the highest derivative of the curve in the corresponding energy space, which makes the tension behavior there critical.

\begin{proof}
For $(\tau,\varphi)\in H^{1/2}(\T)\times H^{1/2}(\T)$, we may define the bilinear form
\begin{equation}\label{eq:Beps}
  \mc{B}_\epsilon(\tau,\varphi) := \int_\T \overline{\mc{L}}_\epsilon[(\tau\X_s)_s]\cdot (\varphi\X_s)_s\,ds\,,
\end{equation}
which, using Parseval's theorem, may be seen to satisfy 
\begin{equation}
\begin{aligned}
  \mc{B}_\epsilon(\tau,\varphi) 
  &= \int_\T \bigg( (\varphi_s\X_s)\cdot T_{m_\epsilon^{\rm t}}(\tau_s\X_s)  + (\varphi\X_{ss})\cdot T_{m_\epsilon^{\rm n}}(\tau\X_{ss}) \bigg)\,ds \\
  &\quad= \sum_{\abs{k}=0}^\infty \bigg( m_\epsilon^{\rm t}(k) \mc{F}[\varphi_s\X_s]\cdot \overline{\mc{F}[\tau_s\X_s]}  + m_\epsilon^{\rm n}(k) \mc{F}[\varphi\X_{ss}]\cdot\overline{\mc{F}[\tau\X_{ss}]} \bigg) \\
  &\quad= \int_\T \bigg( T_{m_\epsilon^{\rm t}}(\varphi_s\X_s)\cdot(\tau_s\X_s) + T_{m_\epsilon^{\rm n}}(\varphi\X_{ss})\cdot(\tau\X_{ss}) \bigg)\,ds \\
  &\quad = \mc{B}_\epsilon(\varphi,\tau)\,. 
\end{aligned}
\end{equation}
In addition, using the multiplier bounds of Lemma \ref{lem:multiplierbds} and Parseval's theorem, we may bound 
\begin{equation}
\begin{aligned}
  \abs{\mc{B}_\epsilon(\tau,\varphi) } &\le 
  c\abs{\log\epsilon}\sum_{\abs{k}=0}^{\floor{\frac{1}{2\pi\epsilon}}} \bigg(\abs{\mc{F}[\varphi_s\X_s]}\abs{\mc{F}[\tau_s\X_s]} + \abs{\mc{F}[\varphi\X_{ss}]}\abs{\mc{F}[\tau\X_{ss}]} \bigg) \\
  &\quad + c\,\epsilon^{-1}\sum_{\abs{k}=\floor{\frac{1}{2\pi\epsilon}}+1}^\infty \abs{k}^{-1}\bigg(\abs{\mc{F}[\varphi_s\X_s]}\abs{\mc{F}[\tau_s\X_s]} + \abs{\mc{F}[\varphi\X_{ss}]}\abs{\mc{F}[\tau\X_{ss}]} \bigg)\\
  &\le c\,\epsilon^{-1}\abs{\log\epsilon}\bigg(\norm{\varphi_s\X_s}_{H^{-1/2}}\norm{\tau_s\X_s}_{H^{-1/2}} + \norm{\varphi\X_{ss}}_{H^{-1/2}}\norm{\tau\X_{ss}}_{H^{-1/2}} \bigg) \\
  &\le c\,\epsilon^{-1}\abs{\log\epsilon}\norm{\X_s}_{H^1}^2\norm{\tau}_{H^{1/2}}\norm{\varphi}_{H^{1/2}}\,.
\end{aligned}
\end{equation}
Finally, we note that, by Proposition \ref{prop:positivity}, we may bound $\mc{B}_\epsilon(\tau,\tau)$ below as 
\begin{equation}\label{eq:Btautau_below}
  \mc{B}_\epsilon(\tau,\tau) 
  \ge c\abs{\log\epsilon}\big(\norm{\tau_s\X_s}_{H^{-1/2}(\T)}^2 + \norm{\tau\X_{ss}}_{H^{-1/2}(\T)}^2\big) \,.
\end{equation}

Using Proposition \ref{prop:H_minus_halfbd}, bound \eqref{eq:Hminushalf1}, we may estimate
\begin{equation}
  \norm{\tau_s}_{H^{-1/2}}\le c\norm{\X_s}_{H^1}\norm{\tau_s\X_s}_{H^{-1/2}}\,.
\end{equation}
To obtain a bound for the $L^2$ part of $\tau$, as in \cite{albritton2025rods}, we will need Fenchel's Theorem \cite{fenchel1951differential}: A closed $C^2$ curve $\X$ must satisfy
\begin{equation}\label{eq:fenchel}
  \int_\T\abs{\X_{ss}}^2\,ds \ge \int_\T \abs{\X_{ss}}\,ds\ge 2\pi\,.
\end{equation} 
Letting $\tau_0$ denote the mean of $\tau$, 
\begin{equation}
  \tau_0 = \int_\T\tau\,ds\,,
\end{equation}
on $\T$, we have 
\begin{equation}
\begin{aligned}
  \norm{\tau}_{L^2}^2&\le \norm{\tau-\tau_0}_{L^2}^2+\norm{\tau_0}_{L^2}^2 \le c(\norm{\tau}_{\dot H^{1/2}}^2 + \abs{\tau_0}^2) \\
  &\le c(\norm{\X_s}_{H^1}^2\norm{\tau_s\X_s}_{H^{-1/2}}^2+\abs{\tau_0}^2)\,.
\end{aligned}
\end{equation}
We may estimate $\tau_0$ as 
\begin{equation}
\begin{aligned}
  \abs{\tau_0} &= \abs{\tau_0\frac{\int_\T\abs{\X_{ss}}^2\,ds}{\norm{\X_{ss}}_{L^2}^2}} = \frac{1}{\norm{\X_{ss}}_{L^2}^2}\abs{\int_\T \tau_0 \X_{ss}\cdot\X_{ss}\,ds} \\
  &\le \frac{1}{2\pi}\abs{\int_\T \tau_0\X_{ss}\cdot\frac{\X_{ss}}{\norm{\X_{ss}}_{H^{1/2}}}\,ds} \le c\norm{\tau_0\X_{ss}}_{H^{-1/2}}\,.
\end{aligned}
\end{equation}
Noting that for any $\bv\in H^{1/2}(\T)$ with $\norm{\bv}_{H^{1/2}}=1$ we may calculate
\begin{equation}
\begin{aligned}
  \abs{\int_\T(\tau-\tau_0)\X_{ss}\cdot\bv\,ds} &\le \norm{\tau-\tau_0}_{L^2}\norm{\X_{ss}}_{L^2}\norm{\bv}_{L^\infty}
  \le c\norm{\tau}_{\dot H^{1/2}}\norm{\X_{ss}}_{L^2}\\
  &\le c\norm{\X_s}_{H^1}^2\norm{\tau_s\X_s}_{H^{-1/2}}\,,
\end{aligned}
\end{equation}
we may bound
\begin{equation}
\begin{aligned}
  \norm{\tau_0\X_{ss}}_{H^{-1/2}} &\le \norm{(\tau-\tau_0)\X_{ss}}_{H^{-1/2}}+\norm{\tau\X_{ss}}_{H^{-1/2}}\\
  &\le  c\norm{\X_s}_{H^1}^2\norm{\tau_s\X_s}_{H^{-1/2}}+\norm{\tau\X_{ss}}_{H^{-1/2}}\,.
\end{aligned}
\end{equation}
Thus the mean of $\tau$ satisfies
\begin{equation}
  \abs{\tau_0}\le c\big(\norm{\X_s}_{H^1}^2\norm{\tau_s\X_s}_{H^{-1/2}}+\norm{\tau\X_{ss}}_{H^{-1/2}}\big)\,,
\end{equation}
and altogether we may bound 
\begin{equation}\label{eq:tau_Hhalf}
\begin{aligned}
  \norm{\tau}_{H^{1/2}}&\le 
  c\big((\norm{\X_s}_{H^1}+\norm{\X_s}_{H^1}^2)\norm{\tau_s\X_s}_{H^{-1/2}}+\norm{\tau\X_{ss}}_{H^{-1/2}}\big) \\
  &\le c\abs{\log\epsilon}^{-1/2}(1+\norm{\X_s}_{H^1}^2)\mc{B}_\epsilon(\tau,\tau)^{1/2}\,.
\end{aligned}
\end{equation}
By the Lax-Milgram lemma, given $g\in H^{-1/2}(\T)$, there is then a unique $\tau\in H^{1/2}(\T)$ satisfying 
\begin{equation}
  \mc{B}_\epsilon(\tau,\varphi) = \int_\T g\,\varphi\,ds \qquad \forall\;\varphi\in H^{1/2}(\T)\,.
\end{equation}

We consider $g$ given by the right hand side of \eqref{eq:TDP}. In particular, upon multiplying both sides of \eqref{eq:TDP} by $\tau$ and integrating by parts over $\T$, we have 
\begin{equation}\label{eq:tau_eq}
  \mc{B}_\epsilon(\tau,\tau) = \int_\T\overline{\mc{L}}_\epsilon[\X_{ssss}]\cdot(\tau\X_s)_s\,ds\,.
\end{equation}
Using Parseval's theorem and the multiplier behavior of Lemma \ref{lem:multiplierbds}, the right hand side of \eqref{eq:tau_eq} may be estimated as 
\begin{equation}
\begin{aligned}
  &\abs{\int_\T\overline{\mc{L}}_\epsilon[\X_{ssss}]\cdot(\tau\X_s)_s\,ds} \le \abs{\int_\T \tau_s\X_s\cdot T_{m_\epsilon^{\rm t}}\P_{\X_s}\X_{ssss}\,ds} + \abs{\int_\T \tau\X_{ss}\cdot T_{m_\epsilon^{\rm n}}\P_{\X_s}^\perp\X_{ssss}\,ds}\\
  &\quad =\abs{\sum_{\abs{k}=0}^\infty m_\epsilon^{\rm t}(k)\mc{F}[\tau_s\X_s]\cdot \overline{\mc{F}[\P_{\X_s}\X_{ssss}]} }
   + \abs{\sum_{\abs{k}=0}^\infty m_\epsilon^{\rm n}(k)\mc{F}[\tau\X_{ss}]\cdot \overline{\mc{F}[\P_{\X_s}^\perp\X_{ssss}]} }\\
  &\quad \le c\abs{\log\epsilon}\sum_{\abs{k}=0}^{\floor{\frac{1}{2\pi\epsilon}}} \abs{\mc{F}[\tau_s\X_s]\cdot \overline{\mc{F}[\P_{\X_s}\X_{ssss}]} } + \abs{\mc{F}[\tau\X_{ss}]\cdot \overline{\mc{F}[\P_{\X_s}^\perp\X_{ssss}]} }\\
  &\qquad + c\,\epsilon^{-1}\sum_{\floor{\frac{1}{2\pi\epsilon}}+1}^\infty\abs{k}^{-1}\bigg( \abs{\mc{F}[\tau_s\X_s]\cdot \overline{\mc{F}[\P_{\X_s}\X_{ssss}]} } + \abs{\mc{F}[\tau\X_{ss}]\cdot \overline{\mc{F}[\P_{\X_s}^\perp\X_{ssss}]} }\bigg)\\
  &\quad \le c\,\epsilon^{-1}\abs{\log\epsilon}\bigg(\norm{\tau_s\X_s}_{H^{-1/2}}\norm{\P_{\X_s}\X_{ssss}}_{H^{-1/2}} + \norm{\tau\X_{ss}}_{H^{-1/8}}\norm{\P_{\X_s}^\perp\X_{ssss}}_{H^{-7/8}}\bigg)\,.
\end{aligned}
\end{equation}
Note that in the final inequality, for the term corresponding to the normal directions along the filament, we crucially make use of the fact that we can share the $\abs{k}^{-1}$ unequally between $\tau\X_{ss}$ and $\P_{\X_s}^\perp\X_{ssss}$. In particular, we can leverage the additional regularity of $\tau\X_{ss}$ (compared to $\tau_s\X_s$) and require less from the normal direction term $\P_{\X_s}^\perp\X_{ssss}$. This will be useful for closing a fixed point argument later on.

We may estimate
\begin{equation}\label{eq:Lp_bd}
  \norm{\tau\X_{ss}}_{H^{-1/8}} \le c\norm{\X_{ss}}_{L^2}\norm{\tau}_{L^p}
\end{equation}
for any $p>2$; in particular, we have
\begin{equation}\label{eq:Hminusm}
  \norm{\tau\X_{ss}}_{H^{-1/8}} \le c\norm{\X_{ss}}_{L^2}\norm{\tau}_{H^{1/2}}\,.
\end{equation}
Furthermore, by the triangle inequality and interpolation, we may bound 
\begin{equation}
\begin{aligned}
  \norm{\P_{\X_s}^\perp\X_{ssss}}_{H^{-7/8}} &\le \norm{\X_{ss}}_{\dot H^{9/8}} + \norm{\P_{\X_s}\X_{ssss}}_{H^{-7/8}} \\
  &\le c\norm{\X_{ss}}_{\dot H^{3/2}}^{3/4}\norm{\X_{ss}}_{L^2}^{1/4} + c\norm{\P_{\X_s}\X_{ssss}}_{H^{-1/2}}\,.
\end{aligned}
\end{equation}
%
So far, we may thus bound $\mc{B}_\epsilon(\tau,\tau)$ as
\begin{equation}
\begin{aligned}
  \mc{B}_\epsilon(\tau,\tau)\le c\,\epsilon^{-1}\abs{\log\epsilon} \norm{\tau}_{H^{1/2}}\bigg(\norm{\X_{ss}}_{L^2}\norm{\P_{\X_s}\X_{ssss}}_{H^{-1/2}} +\norm{\X_{sss}}_{\dot H^{1/2}}^{3/4}\norm{\X_{ss}}_{L^2}^{5/4} \bigg)\,.
\end{aligned}
\end{equation}
Using the identities \eqref{eq:inex_ids} coming from the inextensibility constraint, we may write the tangential part of $\X_{ssss}$ as
\begin{equation}
  \P_{\X_s}\X_{ssss} = -\frac{3}{2}\p_s\abs{\X_{ss}}^2\X_s\,.
\end{equation}
By Proposition \ref{prop:H_minus_halfbd}, bound \eqref{eq:Hminushalf2}, along with the interpolation inequalities \eqref{eq:interps}, we may then estimate $\P_{\X_s}\X_{ssss}$ as 
\begin{equation}
\begin{aligned}
  \norm{\P_{\X_s}\X_{ssss}}_{H^{-1/2}(\T)} &\le c\norm{\abs{\X_{ss}}^2}_{\dot H^{1/2}}\norm{\X_s}_{H^1}
  \le c\norm{\p_s\abs{\X_{ss}}^2}_{L^2}^{1/2}\norm{\abs{\X_{ss}}^2}_{L^2}^{1/2}\norm{\X_s}_{H^1}\\
  &\le c\norm{\X_{sss}}_{L^2}^{1/2}\norm{\X_{ss}}_{L^\infty}\norm{\X_s}_{H^1}^{3/2}
  \le c\norm{\X_{sss}}_{\dot H^{1/2}}^{2/3}\norm{\X_s}_{H^1}^{7/3}\,.
\end{aligned}
\end{equation}
We thus obtain 
\begin{equation}
\begin{aligned}
  \mc{B}_\epsilon(\tau,\tau)\le c\,\epsilon^{-1}\abs{\log\epsilon} \norm{\tau}_{H^{1/2}}\big(\norm{\X_{sss}}_{\dot H^{1/2}}^{3/4}\norm{\X_s}_{H^1}^{5/4}+ \norm{\X_{sss}}_{\dot H^{1/2}}^{2/3}\norm{\X_s}_{H^1}^{10/3}\big)\,.
\end{aligned}
\end{equation}
The lower bound \eqref{eq:tau_Hhalf} then yields
\begin{equation}
  \norm{\tau}_{H^{1/2}}^2\le c\,\epsilon^{-1} \norm{\tau}_{H^{1/2}}(1+\norm{\X_s}_{H^1}^4)\big(\norm{\X_{sss}}_{\dot H^{1/2}}^{3/4}\norm{\X_s}_{H^1}^{5/4}+ \norm{\X_{sss}}_{\dot H^{1/2}}^{2/3}\norm{\X_s}_{H^1}^{10/3}\big)\,,
\end{equation}
and an application of Young's inequality yields \eqref{eq:tau_bound}.

We next turn to the Lipschitz bound \eqref{eq:tau_lip}. 
Let $\X_1$, $\X_2$ denote two nearby curves\footnote{We say that two curves are ``nearby" if $\norm{\X_1-\X_2}_{H^2(\T)}=\delta$ for some $\delta$ sufficiently small.}, and let $\tau_1$, $\tau_2$ satisfy the corresponding tension determination problems \eqref{eq:TDP}. Letting $\mc{B}_{\epsilon,1}$, $\mc{B}_{\epsilon,2}$ denote the associated bilinear forms \eqref{eq:Beps}, we consider the weak form of the tension equations satisfied by $\tau_1$ and $\tau_2$. In particular, for any $\varphi\in H^{1/2}(\T)$, we have that $\tau_1$ and $\tau_2$ satisfy
\begin{equation}\label{eq:B1B2eqn}
\begin{aligned}
  &\mc{B}_{\epsilon,1}(\tau_1,\varphi) - \mc{B}_{\epsilon,2}(\tau_2,\varphi)\\
  &\qquad= \underbrace{\int_\T\overline{\mc{L}}_{\epsilon,1}[(\X_1)_{ssss}]\cdot(\varphi(\X_1)_s)_s\,ds
  - \int_\T\overline{\mc{L}}_{\epsilon,2}[(\X_2)_{ssss}]\cdot(\varphi(\X_2)_s)_s\,ds}_{\rm{RHS}}\,.
\end{aligned}
\end{equation}

Note that, using the form \eqref{eq:Beps} of $\mc{B}_\epsilon$, we may write
\begin{equation}
\begin{aligned}
   &\mc{B}_{\epsilon,1}(\tau_1,\varphi) - \mc{B}_{\epsilon,2}(\tau_2,\varphi)
   %
   %
   = \mc{B}_{\epsilon,2}(\tau_1-\tau_2,\varphi) + {\rm LHS}\,,\\
   &\quad {\rm LHS} =\int_\T\bigg( \varphi_s(\X_1-\X_2)_s\cdot T_{m_\epsilon^{\rm t}}((\tau_1)_s(\X_1)_s) + \varphi(\X_1-\X_2)_{ss}\cdot T_{m_\epsilon^{\rm n}}(\tau_1(\X_1)_{ss})\bigg)\,ds\\
  &\qquad\quad +\int_\T\bigg( \varphi_s(\X_2)_s\cdot T_{m_\epsilon^{\rm t}}((\tau_1)_s(\X_1-\X_2)_s) + \varphi(\X_2)_{ss}\cdot T_{m_\epsilon^{\rm n}}(\tau_1(\X_1-\X_2)_{ss})\bigg)\,ds \,.
\end{aligned}
\end{equation}
Following the estimates above for a single curve, we may bound the terms in LHS as
\begin{equation}
\begin{aligned}
  \abs{{\rm LHS}} &\le c\,\epsilon^{-1}\abs{\log\epsilon}\bigg( \norm{\varphi_s(\X_1-\X_2)_s}_{H^{-1/2}}\norm{(\tau_1)_s(\X_1)_s}_{H^{-1/2}} \\
  &\hspace{-.5cm}+ \norm{\varphi(\X_1-\X_2)_{ss}}_{H^{-1/2}}\norm{\tau_1(\X_1)_{ss}}_{H^{-1/2}}+\norm{\varphi_s(\X_2)_s}_{H^{-1/2}}\norm{(\tau_1)_s(\X_1-\X_2)_s}_{H^{-1/2}}\\
  &\qquad  + \norm{\varphi(\X_2)_{ss}}_{H^{-1/2}}\norm{\tau_1(\X_1-\X_2)_{ss}}_{H^{-1/2}}\bigg)\\
  &\le c\,\epsilon^{-1}\abs{\log\epsilon} \norm{\varphi}_{H^{1/2}}\norm{(\X_1-\X_2)_s}_{H^1}\norm{\tau_1}_{H^{1/2}}\big(\norm{(\X_1)_s}_{H^1}+\norm{(\X_2)_s}_{H^1}\big) \\
  &\le c\,\epsilon^{-1}\abs{\log\epsilon} \norm{\varphi}_{H^{1/2}}\norm{(\X_1-\X_2)_s}_{H^1}\big(\norm{(\X_1)_s}_{H^1}+\norm{(\X_2)_s}_{H^1}\big)\times\\
  &\qquad
  \times (1+\norm{(\X_1)_s}_{H^1}^{22/3})\big(\norm{(\X_1)_{sss}}_{\dot H^{1/2}}^{3/4}+ \norm{(\X_1)_{sss}}_{\dot H^{1/2}}^{2/3}\big)\,.
\end{aligned}
\end{equation}
Here in the final line, we have used the tension bound \eqref{eq:tau_bound} (with some simplification).


In addition, we may bound the terms RHS in \eqref{eq:B1B2eqn} as follows. Using the identity \eqref{eq:tang_ID} for the tangential direction along $\X_1$ and $\X_2$, we may write out RHS as
\begin{equation}
\begin{aligned}
  {\rm RHS} &= -\frac{3}{2}\int_\T\bigg( \varphi_s(\X_1-\X_2)_s\cdot T_{m_\epsilon^{\rm t}} \big(\p_s\abs{(\X_1)_{ss}}^2(\X_1)_s \big) \\
  &\quad + \varphi_s(\X_2)_s\cdot T_{m_\epsilon^{\rm t}} \big[\p_s(\abs{(\X_1)_{ss}}^2-\abs{(\X_2)_{ss}}^2)(\X_1)_s +\p_s\abs{(\X_2)_{ss}}^2(\X_1-\X_2)_s \big] \bigg)\,ds \\
  &\qquad +\int_\T\bigg( \varphi(\X_1-\X_2)_{ss}\cdot T_{m_\epsilon^{\rm n}} \big(\P_{(\X_1)_s}^\perp(\X_1)_{ssss}\big)\\
  &\quad + \varphi(\X_2)_{ss}\cdot T_{m_\epsilon^{\rm n}}\big( (\P_{(\X_1)_s}^\perp-\P_{(\X_2)_s}^\perp)(\X_1)_{ssss}+ \P_{(\X_2)_s}^\perp(\X_1-\X_2)_{ssss}\big)\bigg)\,ds\,.
\end{aligned}
\end{equation}
Then, again following the estimates above for a single curve, we may bound
\begin{equation}
\begin{aligned}
  \abs{{\rm RHS}} &\le c\,\epsilon^{-1}\abs{\log\epsilon}\bigg( \norm{\varphi_s(\X_1-\X_2)_s}_{H^{-1/2}}\norm{\p_s\abs{(\X_1)_{ss}}^2(\X_1)_s}_{H^{-1/2}} \\
  &\quad + \norm{\varphi_s(\X_2)_s}_{H^{-1/2}} \norm{\p_s\big((\X_1-\X_2)_{ss}\cdot(\X_1+\X_2)_{ss}\big)(\X_1)_s}_{H^{-1/2}}  \\
  &\quad + \norm{\varphi_s(\X_2)_s}_{H^{-1/2}} \norm{\p_s\abs{(\X_2)_{ss}}^2(\X_1-\X_2)_s }_{H^{-1/2}}  \\
  &\qquad + \norm{\varphi(\X_1-\X_2)_{ss}}_{H^{-1/8}}\norm{\P_{(\X_1)_s}^\perp(\X_1)_{ssss}}_{H^{-7/8}}\\
  &\quad + \norm{\varphi(\X_2)_{ss}}_{H^{-1/8}} \norm{(\P_{(\X_1)_s}^\perp-\P_{(\X_2)_s}^\perp)(\X_1)_{ssss}}_{H^{-7/8}} \\
  &\quad + \norm{\varphi(\X_2)_{ss}}_{H^{-1/8}} \norm{\P_{(\X_2)_s}^\perp(\X_1-\X_2)_{ssss}}_{H^{-7/8}}\bigg) \\
  %
  %
  %
  %
  %
  %
   &\le c\,\epsilon^{-1}\abs{\log\epsilon}\norm{\varphi}_{H^{1/2}}(1+\norm{(\X_1)_s}_{H^1}^{10/3}+\norm{(\X_2)_s}_{H^1}^{10/3})\bigg[\norm{(\X_1-\X_2)_s}_{H^1}\times\\
  &\quad\times\bigg(\norm{(\X_1)_{sss}}_{\dot H^{1/2}}^{2/3} + \norm{(\X_2)_{sss}}_{\dot H^{1/2}}^{2/3} + \norm{(\X_1)_{sss}}_{\dot H^{1/2}}^{3/4}\bigg) \\
  %
  &\qquad + \norm{(\X_1-\X_2)_{sss}}_{\dot H^{1/2}}^{1/3}\norm{(\X_1-\X_2)_s}_{H^1}^{2/3}(\norm{(\X_1)_{sss}}_{\dot H^{1/2}}^{1/3}+\norm{(\X_2)_{sss}}_{\dot H^{1/2}}^{1/3}) \\
  &\qquad  + \norm{(\X_1-\X_2)_{sss}}_{\dot H^{1/2}}^{3/4}\norm{(\X_1-\X_2)_s}_{H^1}^{1/4} \bigg]\,.
\end{aligned}
\end{equation}

Altogether, taking $\varphi=\tau_1-\tau_2$ in \eqref{eq:B1B2eqn} and the subsequent bounds, we may estimate 
\begin{equation}
\begin{aligned}
  \mc{B}_{\epsilon,2}&(\tau_1-\tau_2,\tau_1-\tau_2) \le \abs{{\rm LHS}}+\abs{{\rm RHS}}\\
  &\le 
  c\,\epsilon^{-1}\abs{\log\epsilon}\norm{\tau_1-\tau_2}_{H^{1/2}}(1+\norm{(\X_1)_s}_{H^1}^{10/3}+\norm{(\X_2)_s}_{H^1}^{10/3})\bigg[\norm{(\X_1-\X_2)_s}_{H^1}\times\\
  &\quad\times\bigg(\norm{(\X_1)_{sss}}_{\dot H^{1/2}}^{2/3} + \norm{(\X_2)_{sss}}_{\dot H^{1/2}}^{2/3} + \norm{(\X_1)_{sss}}_{\dot H^{1/2}}^{3/4}\bigg)(1+\norm{(\X_1)_s}_{H^1}^5) \\
  &\qquad + \norm{(\X_1-\X_2)_{sss}}_{\dot H^{1/2}}^{1/3}\norm{(\X_1-\X_2)_s}_{H^1}^{2/3}(\norm{(\X_1)_{sss}}_{\dot H^{1/2}}^{1/3}+\norm{(\X_2)_{sss}}_{\dot H^{1/2}}^{1/3}) \\
  &\qquad  + \norm{(\X_1-\X_2)_{sss}}_{\dot H^{1/2}}^{3/4}\norm{(\X_1-\X_2)_s}_{H^1}^{1/4} \bigg] \,.
\end{aligned}
\end{equation}
Applying the lower bound \eqref{eq:tau_Hhalf} and Young's inequality yields the bound \eqref{eq:tau_lip}. 
\end{proof}

\subsection{Main operator decomposition and linear bounds}\label{subsec:mainOpDecomp}
Since the operator $\overline{\mc{L}}_\epsilon\p_s^4$ used for the evolution \eqref{eq:evolution} involves a nonlinear dependence on the curve $\X$, we begin by extracting the principal linear part for use in the local existence theory. Using the inextensibility constraint for $\X$, we show the following. 
\begin{lemma}[Main operator decompositon]\label{lem:op_decomp}
  Given $\X\in H^{7/2}(\T)$ satisfying $\abs{\X_s}=1$ and $\overline{\mc{L}}_\epsilon$ as in Definition \ref{def:Lepsbar}, the operator $\overline{\mc{L}}_\epsilon[\p_s^4\cdot]$ applied to $\X$ may be decomposed as 
  \begin{equation}
  \overline{\mc{L}}_\epsilon[\p_s^4\X] = T_{m_\epsilon^{\rm n}}(\p_s^4\X) + \mc{R}[\X]\,,
  \end{equation}
  where $m_\epsilon^{\rm n}$ is the normal direction multiplier \eqref{eq:multN}--\eqref{eq:multN0}, and the remainder term $\mc{R}[\X]$ satisfies
  \begin{equation}\label{eq:RX_bound}
  \norm{\mc{R}[\X]}_{H^{1/2}(\T)} \le c\,\epsilon^{-3/2}\big(\norm{\X_{sss}}_{\dot H^{1/2}}^{2/3}\norm{\X_s}_{H^1}^{4/3} + \norm{\X_s}_{H^1}^2 \big)(1+\norm{\X_s}_{H^1}^2)\,.
  \end{equation}
  In addition, given two nearby curves $\X_1,\X_2\in H^{7/2}(\T)$ both satisfying $\abs{(\X_j)_s}=1$, we may bound the difference between the corresponding remainder terms as
  \begin{equation}\label{eq:RX_lip}
  \begin{aligned}
  &\norm{\mc{R}[\X_1]-\mc{R}[\X_2]}_{H^{1/2}(\T)}\\
  &\quad \le c\,\epsilon^{-3/2}(1+\norm{(\X_1)_s}_{H^1}^{10/3}+\norm{(\X_2)_s}_{H^1}^{10/3})\times\\
  &\qquad\quad \times \bigg[\norm{(\X_1-\X_2)_s}_{H^1}\bigg(\norm{(\X_1)_{sss}}_{\dot H^{1/2}}^{2/3}+\norm{(\X_2)_{sss}}_{\dot H^{1/2}}^{2/3}+\norm{(\X_1)_s}_{H^1}^{2/3}+1\bigg)\\
  &\qquad\qquad + \norm{(\X_1-\X_2)_s}_{H^1}^{2/3}\norm{(\X_1-\X_2)_{sss}}_{\dot H^{1/2}}^{1/3}(\norm{(\X_1)_{sss}}_{\dot H^{1/2}}^{1/3}+\norm{(\X_2)_{sss}}_{\dot H^{1/2}}^{1/3} )\\
  &\qquad\qquad+\norm{(\X_1-\X_2)_{sss}}_{\dot H^{1/2}}^{2/3}\norm{(\X_1-\X_2)_s}_{H^1}^{1/3}\bigg]\,.
  \end{aligned}
  \end{equation}
\end{lemma}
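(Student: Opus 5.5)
Write $\P_{\X_s}+\P^\perp_{\X_s}={\bf I}$ and split $\p_s^4\X$ into tangential and normal parts. Then
\begin{equation*}
\overline{\mc L}_\epsilon[\p_s^4\X] = \P_{\X_s}T_{m^{\rm t}_\epsilon}(\P_{\X_s}\X_{ssss}) + \P^\perp_{\X_s}T_{m^{\rm n}_\epsilon}(\X_{ssss}) - \P^\perp_{\X_s}T_{m^{\rm n}_\epsilon}(\P_{\X_s}\X_{ssss}),
\end{equation*}
and moreover $\P^\perp_{\X_s}T_{m^{\rm n}_\epsilon}\X_{ssss} = T_{m^{\rm n}_\epsilon}\X_{ssss} - \P_{\X_s}T_{m^{\rm n}_\epsilon}\X_{ssss}$. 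So we set
\begin{equation*}
\mc R[\X]=\P_{\X_s}T_{m^{\rm t}_\epsilon}(\P_{\X_s}\X_{ssss})-\P^\perp_{\X_s}T_{m^{\rm n}_\epsilon}(\P_{\X_s}\X_{ssss})-\P_{\X_s}T_{m^{\rm n}_\epsilon}(\X_{ssss}).
\end{equation*}
The first two terms involve only the tangential part. By \eqref{eq:inex_ids} it equals $\P_{\X_s}\X_{ssss}=-\tfrac32\p_s\abs{\X_{ss}}^2\X_s$, which is lower order. By Lemma \ref{lem:multiplierbds}, $T_{m_\epsilon}$ maps $H^{-1/2}\to H^{1/2}$ with norm $\lesssim \epsilon^{-1}\abs{\log\epsilon}$. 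Multiplication by $\X_s\in H^1$ (or by $\X_s\otimes\X_s$) is bounded on $H^{1/2}$ with constant $c(1+\norm{\X_s}_{H^1}^2)$. Combining these with the estimate $\norm{\P_{\X_s}\X_{ssss}}_{H^{-1/2}}\le c\norm{\X_{sss}}_{\dot H^{1/2}}^{2/3}\norm{\X_s}_{H^1}^{7/3}$ from the proof of Lemma \ref{lem:TDP} bounds these terms. The factor $\abs{\log\epsilon}$ is absorbed into $\epsilon^{-1/2}$.

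The main obstacle is the third term, $\X_s(\X_s\cdot T_{m^{\rm n}_\epsilon}\X_{ssss})$, which contains a full $\X_{ssss}$ inside the multiplier. Here I use inextensibility via a commutator. Write $\X_{ssss}=\p_s^2\X_{ss}$ and move the projection through the operator:
\begin{equation*}
\X_s\cdot T_{m^{\rm n}_\epsilon}\X_{ssss} = T_{m^{\rm n}_\epsilon}(\X_s\cdot\X_{ssss}) - [T_{m^{\rm n}_\epsilon},\X_s\cdot]\X_{ssss}.
\end{equation*}
The first piece is again the lower-order scalar $-3\X_{sss}\cdot\X_{ss}$. For the commutator I would write $T_{m^{\rm n}_\epsilon}=T_{m^{\rm n}_\epsilon|k|}\,|\p_s|^{-1}$ modulo the zero mode. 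By Lemma \ref{lem:highk_refinement}, $m_\epsilon^{\rm n}(k)\abs{k}$ is bounded by $c\epsilon^{-1}$ for large $k$, and by $c\abs{k}\abs{\log\epsilon}\lesssim\epsilon^{-1}$ for $\abs{k}<1/(2\pi\epsilon)$. It should therefore also be estimated more crudely: $T_{m^{\rm n}_\epsilon}$ gains one derivative at cost $\epsilon^{-1}$, and the $\epsilon^{-3/2}$ in \eqref{eq:RX_bound} suggests an additional half-derivative trade. Rather than a genuine commutator estimate, I would try the simpler route of integrating by parts inside: $\X_s\cdot\p_s\X_{sss}=\p_s(\X_s\cdot\X_{sss})-\X_{ss}\cdot\X_{sss}$. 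Then
\begin{equation*}
T(\X_s\cdot\X_{ssss}) - \X_s\cdot T\X_{ssss},
\end{equation*}
with $T=T_{m^{\rm n}_\epsilon}\p_s$, is written in terms of $[T_{m^{\rm n}_\epsilon}\p_s,\X_s]\X_{sss}$. The operator $T_{m^{\rm n}_\epsilon}\p_s$ is of order zero with norm $\lesssim\epsilon^{-1}$. The commutator of an order-zero multiplier with $\X_s\in H^1$ gains a derivative, in the Kato--Ponce/Calder\'on sense on $\T$. This gives an $H^{1/2}$ bound by $\epsilon^{-1}\norm{\X_{ss}}\cdot\norm{\X_{sss}}_{H^{-1/2}}$-type products, which interpolate via \eqref{eq:interps} to $\norm{\X_{sss}}_{\dot H^{1/2}}^{2/3}\norm{\X_s}_{H^1}^{4/3}$ plus a term $\norm{\X_s}_{H^1}^2$. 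The extra $\epsilon^{-1/2}$ covers the symbol-derivative losses, since $\p_k(m^{\rm n}_\epsilon k)$ is only bounded by $\epsilon^{-1}$ uniformly, with a half-derivative sacrificed to interpolate between regimes.

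The Lipschitz bound \eqref{eq:RX_lip} then follows by telescoping each term of $\mc R[\X_1]-\mc R[\X_2]$: differences of the $\X_s$ prefactors, of the projections, and of the lower-order scalars $\abs{\X_{ss}}^2$, $\X_{ss}\cdot\X_{sss}$. Each piece is estimated by the same mapping and commutator bounds, and the fractional powers are distributed via \eqref{eq:interps} as in the proof of \eqref{eq:tau_lip}.
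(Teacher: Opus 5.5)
Your splitting of $\mc R[\X]$ is exactly the paper's $\mc R_a+\mc R_b-\mc R_c$. Your treatment of the two terms with tangential data is also the paper's: use $\P_{\X_s}\X_{ssss}=-3(\X_{ss}\cdot\X_{sss})\X_s$, then the $H^{-1/2}\to H^{1/2}$ bound for $T_{m_\epsilon}$. The two arguments differ only on $\mc R_a$.

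\textbf{The paper's route for $\mc R_a$, and why your commutator is needed.} The paper writes $\X_s\cdot T_{m_\epsilon^{\rm n}}\X_{ssss}=\X_s\cdot\p_s^3T_{m_\epsilon^{\rm n}}\X_s$ and subtracts $c_{\rm n}\epsilon^{-1}\X_{sss}$. It bounds the difference by $\epsilon^{-3/2}\norm{\X_{sss}}_{L^2}$ via Lemma \ref{lem:highk_refinement}, then uses $\X_s\cdot\X_{sss}=-\abs{\X_{ss}}^2$ directly, with no commutator. That step does not hold as written:
\begin{itemize}
\item Under the paper's Fourier convention $\p_s$ has symbol $-2\pi ik$. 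So $\p_s^3T_{m_\epsilon^{\rm n}}-c_{\rm n}\epsilon^{-1}\p_s^2$ has symbol $(2\pi k)^2\big(c_{\rm n}\epsilon^{-1}+2\pi ik\,m_\epsilon^{\rm n}(k)\big)$.
\item The modulus of this symbol is at least $c_{\rm n}\epsilon^{-1}(2\pi k)^2$, which is top order.
\item The factor $\abs{k}m_\epsilon^{\rm n}(k)-c_{\rm n}\epsilon^{-1}$ in \eqref{eq:R1diff} tacitly replaces $2\pi ik$ by $\abs{k}$.
\end{itemize}
What Lemma \ref{lem:highk_refinement} actually shows is that $S:=T_{m_\epsilon^{\rm n}}\p_s$ equals $c'\epsilon^{-1}\mathcal{H}$ ($\mathcal H$ the Hilbert transform) plus a remainder. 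The leading part of $\mc R_a$ is therefore $\epsilon^{-1}(\X_s\cdot\mathcal H\X_{sss})\X_s$. Reaching $\X_s\cdot\X_{sss}$ then requires commuting $\X_s$ past a nonlocal operator, which is your step. So your route is not a detour; it supplies what the paper's computation skips.

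\textbf{The gap in your justification of the commutator bound.} The reason you give does not produce the needed gain.
\begin{itemize}
\item A uniform bound $\abs{\p_k(k\,m_\epsilon^{\rm n}(k))}\lesssim\epsilon^{-1}$ gives $\abs{\sigma(\ell+j)-\sigma(\ell)}\lesssim\epsilon^{-1}\abs{j}$ for the symbol $\sigma$ of $S$. On $\T$ this is no better than the trivial bound $2\sup\abs{\sigma}\lesssim\epsilon^{-1}$.
\item The commutator is then only controlled by $\epsilon^{-1}\norm{\X_{sss}}_{H^{1/2}}$ times lower-order norms. That is linear in the top norm: it is not of the form \eqref{eq:RX_bound}, and it cannot give the $T^{1/8}$ smallness needed in Lemma \ref{lem:LWP}.
\item A genuine Calder\'on gain needs $\abs{\sigma(\ell+j)-\sigma(\ell)}\lesssim\epsilon^{-1}\abs{j}/\abs{\ell}$ for $\abs{j}\le\abs{\ell}/2$. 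That is a Mikhlin bound $\abs{\p_\xi(\xi m_\epsilon^{\rm n}(\xi))}\lesssim\epsilon^{-1}\abs{\xi}^{-1}$. You would have to derive it from the Bessel formula \eqref{eq:multN}, since Lemmas \ref{lem:multiplierbds}--\ref{lem:lowk_refine} give no derivative information.
\end{itemize}

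\textbf{A fix using only Lemma \ref{lem:highk_refinement}.}
\begin{itemize}
\item Write $\abs{k}m_\epsilon^{\rm n}(k)=c_{\rm n}\epsilon^{-1}-\rho(k)$ with $0\le\rho(k)\le c_{\rm n}\epsilon^{-1}(1+\epsilon\abs{k})^{-1}$. Then $S=c'\epsilon^{-1}\mathcal H+S_\rho$ with $\norm{S_\rho\X_{sss}}_{H^{1/2}}\lesssim\epsilon^{-3/2}\norm{\X_{sss}}_{L^2}$.
\item This remainder is where $\epsilon^{-3/2}$ legitimately enters. Your half-derivative trade belongs here, not in the commutator.
\item For the Hilbert part, $\X_s\cdot\mathcal H\X_{sss}=-\mathcal H\abs{\X_{ss}}^2-[\mathcal H,\X_s\cdot]\X_{sss}$.
\item The commutator symbol $\mathrm{sgn}(j+\ell)-\mathrm{sgn}(\ell)$ vanishes unless $\abs{j}\ge\abs{\ell}$. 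So both derivatives on $\X_{sss}$ can be moved onto the $\X_s$ factor, giving $\norm{[\mathcal H,\X_s\cdot]\X_{sss}}_{H^{1/2}}\lesssim\norm{\X_{ss}}_{H^{1/2}}\sum_k\abs{\mc F[\X_{ss}](k)}$.
\item By \eqref{eq:interps} this is $\lesssim\norm{\X_{sss}}_{\dot H^{1/2}}^{2/3}\norm{\X_{ss}}_{L^2}^{4/3}+\norm{\X_{ss}}_{L^2}^2$, exactly the structure of \eqref{eq:RX_bound}.
\item The same bilinear bounds, applied to $[\mathcal H,(\X_1-\X_2)_s\cdot](\X_1)_{sss}$ and $[\mathcal H,(\X_2)_s\cdot](\X_1-\X_2)_{sss}$, give \eqref{eq:RX_lip}.
\end{itemize}
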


\begin{proof}
We begin by writing
\begin{equation}
\begin{aligned}
  \overline{\mc{L}}_\epsilon[\p_s^4\X] &= T_{m_\epsilon^{\rm n}}(\p_s^4\X) + \mc{R}[\X]\,,\\
  \mc{R}[\X] &= \underbrace{-\big(\X_s\cdot T_{m_\epsilon^{\rm n}}(\p_s^4\X)\big)\X_s}_{\mc{R}_a} + \underbrace{\P_{\X_s}T_{m_\epsilon^{\rm t}}(\P_{\X_s}\p_s^4\X)}_{\mc{R}_b} - \underbrace{\P_{\X_s}^\perp T_{m_\epsilon^{\rm n}}(\P_{\X_s}\p_s^4\X) }_{\mc{R}_c}\,.
\end{aligned}
\end{equation}

The terms $\mc{R}_b$ and $\mc{R}_c$ are the simplest to estimate. In particular, using the identities \eqref{eq:inex_ids} coming from the inextensibility constraint, we may write
\begin{equation}\label{eq:tang_ID}
  \P_{\X_s}\p_s^4\X = -3(\X_{ss}\cdot\X_{sss})\X_s 
\end{equation}
so that, using Parseval's theorem, we may bound
\begin{equation}
\begin{aligned}
  \norm{T_{m_\epsilon^{\rm t}}(\P_{\X_s}\p_s^4\X)}_{H^{1/2}}^2
  &= \sum_{\abs{k}=0}^\infty m_\epsilon^{\rm t}(k)^2(1+\abs{k}^2)^{1/2}\abs{\mc{F}[\P_{\X_s}\p_s^4\X]}^2 \\
  &\le c\abs{\log\epsilon}^2\sum_{\abs{k}=0}^{\floor{\frac{1}{2\pi\epsilon}}}(1+\abs{k}^2)^{1/2}\abs{\mc{F}[\P_{\X_s}\p_s^4\X]}^2\\
  &\quad + c\,\epsilon^{-2}\sum_{\abs{k}=\floor{\frac{1}{2\pi\epsilon}}+1}^\infty \abs{k}^{-2}(1+\abs{k}^2)^{1/2}\abs{\mc{F}[\P_{\X_s}\p_s^4\X]}^2\\
  &\le c\,\epsilon^{-2}\abs{\log\epsilon}^2\norm{(\X_{ss}\cdot\X_{sss})\X_s}_{H^{-1/2}}^2\\
  &\le c\,\epsilon^{-2}\abs{\log\epsilon}^2\norm{\X_s}_{H^1}^2\norm{\p_s\abs{\X_{ss}}^2}_{H^{-1/2}}^2\,.
\end{aligned}
\end{equation}
Here we have used the multiplier bounds of Lemma \ref{lem:multiplierbds} in the first inequality, and the second bound of Proposition \ref{prop:H_minus_halfbd} in the final inequality. The interpolation inequalities \eqref{eq:interps} then yield 
\begin{equation}
\begin{aligned}
  \norm{T_{m_\epsilon^{\rm t}}(\P_{\X_s}\p_s^4\X)}_{H^{1/2}} &\le c\,\epsilon^{-1}\abs{\log\epsilon}\norm{\X_s}_{H^1}\norm{\abs{\X_{ss}}^2}_{\dot H^{1/2}}\\
  &\le c\,\epsilon^{-1}\abs{\log\epsilon}\norm{\X_s}_{H^1}^{3/2}\norm{\X_{sss}}_{L^2}^{1/2}\norm{\X_{ss}}_{L^\infty}\\
  &\le c\,\epsilon^{-1}\abs{\log\epsilon}\norm{\X_s}_{H^1}^{7/3}\norm{\X_{sss}}_{\dot H^{1/2}}^{2/3}\,.
\end{aligned}
\end{equation}
We may thus estimate $\mc{R}_b$ as 
\begin{equation}
\begin{aligned}
   \norm{\mc{R}_b}_{H^{1/2}}=\norm{\P_{\X_s}T_{m_\epsilon^{\rm t}}(\P_{\X_s}\p_s^4\X)}_{H^{1/2}}&\le c\norm{\X_s}_{H^1}\norm{T_{m_\epsilon^{\rm t}}(\P_{\X_s}\p_s^4\X)}_{H^{1/2}} \\
   &\le c\,\epsilon^{-1}\abs{\log\epsilon}\norm{\X_s}_{H^1}^{10/3}\norm{\X_{sss}}_{\dot H^{1/2}}^{2/3}\,.
\end{aligned}
\end{equation}
A similar calculation for the normal direction term $\mc{R}_c$ yields 
\begin{equation}
  \norm{\mc{R}_c}_{H^{1/2}}=\norm{\P_{\X_s}^\perp T_{m_\epsilon^{\rm n}}(\P_{\X_s}\p_s^4\X)}_{H^{1/2}} \le c\,\epsilon^{-1}\abs{\log\epsilon}\norm{\X_s}_{H^1}^{10/3}\norm{\X_{sss}}_{\dot H^{1/2}}^{2/3}\,.
\end{equation}

To estimate $\mc{R}_a$, we first use that $\p_s$ commutes with $T_{m_\epsilon^{\rm n}}$ to write 
\begin{equation}\label{eq:Tm_rewrite}
  \X_s\cdot T_{m_\epsilon^{\rm n}}(\p_s^4\X) = \X_s\cdot \p_s^3(T_{m_\epsilon^{\rm n}}\X_s)\,.
\end{equation}
We then consider the difference
\begin{equation}
  \mc{R}_{a,\rm diff}= \p_s^3(T_{m_\epsilon^{\rm n}}\X_s) - c_{\rm n}\epsilon^{-1}\p_s^2\X_s \,,
\end{equation}
where $c_{\rm n}$ is the constant from Lemma \ref{lem:highk_refinement}. Then, using the bounds of Lemma \ref{lem:highk_refinement}, we may estimate 
\begin{equation}\label{eq:R1diff}
\begin{aligned}
\norm{\mc{R}_{a,\rm diff}}_{H^{1/2}}^2 &=
  \sum_{\abs{k}=0}^\infty(1+\abs{k}^2)^{1/2}\big(\abs{k}m_\epsilon^{\rm n}(k) -c_{\rm n}\epsilon^{-1} \big)^2\abs{k}^4\abs{\mc{F}[\X_s]}^2\\
  &\le  \sum_{\abs{k}=0}^\infty(1+\abs{k}^2)^{1/2}(c_{\rm n}\epsilon^{-1})^2 (\epsilon\abs{k}+1)^{-2}\abs{k}^4\abs{\mc{F}[\X_s]}^2\\
  &\le  c\,\epsilon^{-3}\sum_{\abs{k}=0}^\infty(1+\abs{k}^2)^{1/2}\abs{k}^3\abs{\mc{F}[\X_s]}^2\\
  &\le  c\,\epsilon^{-3}\norm{\X_{ss}}_{H^1}^2\,.
\end{aligned}
\end{equation}
Using \eqref{eq:Tm_rewrite}, we may write
\begin{equation}
  \X_s\cdot T_{m_\epsilon^{\rm n}}(\p_s^4\X) = c_{\rm n}\epsilon^{-1}\X_s\cdot \X_{sss} + \X_s\cdot\mc{R}_{a,\rm diff}
  = - c_{\rm n}\epsilon^{-1}\abs{\X_{ss}}^2+ \X_s\cdot\mc{R}_{a,\rm diff}\,,
\end{equation}
so that, by \eqref{eq:R1diff} and the interpolation inequalities \eqref{eq:interps}, we may estimate the final remainder term as 
\begin{equation}
\begin{aligned}
  \norm{\mc{R}_a}_{H^{1/2}} &\le c_{\rm n}\epsilon^{-1}\norm{\X_s}_{H^1}\norm{\abs{\X_{ss}}^2}_{H^{1/2}} + \norm{\X_s}_{H^1}\norm{\mc{R}_{a,\rm diff}}_{H^{1/2}} \\
  &\le c\,\epsilon^{-1}\big(\norm{\X_{sss}}_{\dot H^{1/2}}^{2/3}\norm{\X_s}_{H^1}^{7/3}+ \norm{\X_{sss}}_{\dot H^{1/2}}^{1/3}\norm{\X_s}_{H^1}^{8/3}\big) \\
  &\quad + c\,\epsilon^{-3/2}\big(\norm{\X_{sss}}_{\dot H^{1/2}}^{2/3}\norm{\X_s}_{H^1}^{4/3} + \norm{\X_s}_{H^1}^2 \big)\\
  &\le c\,\epsilon^{-3/2}\big(\norm{\X_{sss}}_{\dot H^{1/2}}^{2/3}\norm{\X_s}_{H^1}^{4/3} + \norm{\X_s}_{H^1}^2 \big)(1+\norm{\X_s}_{H^1})\,.
\end{aligned}
\end{equation}
Combining the bounds for $\mc{R}_a$, $\mc{R}_b$, and $\mc{R}_c$, we obtain the estimate \eqref{eq:RX_bound}.

To show the Lipschitz bound \eqref{eq:RX_lip}, we again begin with $\mc{R}_b$ and $\mc{R}_c$. Using \eqref{eq:tang_ID}, we have
\begin{equation}
\begin{aligned}
  &\norm{\mc{R}_b[\X_1]-\mc{R}_b[\X_2]}_{H^{1/2}}\\
  &\quad \le  
  c\bigg[\norm{(\X_1-\X_2)_s}_{H^1}(\norm{(\X_1)_s}_{H^1}+\norm{(\X_2)_s}_{H^1})\norm{T_{m_\epsilon^{\rm t}}\big(\p_s\abs{(\X_1)_{ss}}^2(\X_1)_s\big)}_{H^{1/2}}\\
  &\qquad\quad +\norm{(\X_2)_s}_{H^1} \bigg(\norm{T_{m_\epsilon^{\rm t}}\big(\p_s\abs{(\X_2)_{ss}}^2(\X_1-\X_2)_s\big)}_{H^{1/2}} \\ 
  &\hspace{5.5cm}+\norm{T_{m_\epsilon^{\rm t}}\big(\p_s(\abs{(\X_1)_{ss}}^2-\abs{(\X_2)_{ss}}^2)(\X_1)_s\big)}_{H^{1/2}}\bigg) \bigg] \\
  &\quad \le c\,\epsilon^{-1}\abs{\log\epsilon}\bigg[\norm{(\X_1-\X_2)_s}_{H^1}(1+\norm{(\X_1)_s}_{H^1}^2+\norm{(\X_2)_s}_{H^1}^2)\times\\
  &\hspace{4cm} \times \bigg(\norm{\abs{(\X_1)_{ss}}^2}_{\dot H^{1/2}}+\norm{\abs{(\X_2)_{ss}}^2}_{\dot H^{1/2}}\bigg)\\
  &\hspace{3cm} + \norm{(\X_1)_s}_{H^1}\norm{((\X_1)_{ss}-(\X_2)_{ss})\cdot((\X_1)_{ss}+(\X_2)_{ss})}_{\dot H^{1/2}} \bigg] \\
  &\quad \le c\,\epsilon^{-1}\abs{\log\epsilon}(1+\norm{(\X_1)_s}_{H^1}^{10/3}+\norm{(\X_2)_s}_{H^1}^{10/3})\times\\
  &\qquad\quad \times \bigg[\norm{(\X_1-\X_2)_s}_{H^1}\bigg(\norm{(\X_1)_{sss}}_{\dot H^{1/2}}^{2/3}+\norm{(\X_2)_{sss}}_{\dot H^{1/2}}^{2/3}\bigg)\\
  &\qquad\qquad + \norm{(\X_1-\X_2)_s}_{H^1}^{2/3}\norm{(\X_1-\X_2)_{sss}}_{\dot H^{1/2}}^{1/3}(\norm{(\X_1)_{sss}}_{\dot H^{1/2}}^{1/3}+\norm{(\X_2)_{sss}}_{\dot H^{1/2}}^{1/3} )\bigg] \,.
\end{aligned}
\end{equation}
An identical bound holds for $\norm{\mc{R}_c[\X_1]-\mc{R}_c[\X_2]}_{H^{1/2}}$ by an analogous calculation.

For $\mc{R}_a$, as above, we first note the linear estimate
\begin{equation}
  \norm{\mc{R}_{a,{\rm diff}}[\X_1-\X_2]}_{H^{1/2}}\le c\,\epsilon^{-3/2}\norm{(\X_1-\X_2)_{ss}}_{H^1}\,.
\end{equation}
We then have
\begin{equation}
\begin{aligned}
  &\norm{\mc{R}_a[\X_1]-\mc{R}_a[\X_2]}_{H^{1/2}}
  \le c\bigg[\norm{(\X_1-\X_2)_s}_{H^1}\bigg(\epsilon^{-1}\norm{\abs{(\X_1)_{ss}}^2}_{H^{1/2}} \\
  &\hspace{5cm}
  +(\norm{(\X_1)_s}_{H^1}+\norm{(\X_2)_s}_{H^1})\norm{\mc{R}_{a,{\rm diff}}[\X_1]}_{H^{1/2}}\bigg)\\
  &\quad + (1+\norm{(\X_2)_s}_{H^1}^2)\bigg(\epsilon^{-1}\norm{(\X_1-\X_2)_{ss}\cdot(\X_1+\X_2)_{ss}}_{H^{1/2}} +\norm{\mc{R}_{a,{\rm diff}}[\X_1-\X_2]}_{H^{1/2}}\bigg)\bigg]\\
  &\le c\,\epsilon^{-3/2}(1+\norm{(\X_1)_s}_{H^1}^3+\norm{(\X_2)_s}_{H^1}^3)\bigg[\norm{(\X_1-\X_2)_{sss}}_{\dot H^{1/2}}^{2/3}\norm{(\X_1-\X_2)_s}_{H^1}^{1/3}\\
  &\qquad +\norm{(\X_1-\X_2)_s}_{H^1}\bigg(\norm{(\X_1)_{sss}}_{\dot H^{1/2}}^{2/3}+\norm{(\X_2)_{sss}}_{\dot H^{1/2}}^{2/3} +\norm{(\X_1)_s}_{H^1}^{2/3}+1\bigg) \bigg]\,.
\end{aligned}
\end{equation}
Again combining the bounds for $\mc{R}_a$, $\mc{R}_b$, and $\mc{R}_c$, we obtain the Lipschitz estimate \eqref{eq:RX_lip}. 
\end{proof}

Given the decomposition of Lemma \ref{lem:op_decomp}, we consider the curve evolution \eqref{eq:evolution} as
\begin{equation}\label{eq:new_evol}
  \frac{\p\X}{\p t} = -T_{m_\epsilon^{\rm n}}\p_s^4\X - \mc{R}[\X] + \overline{\mc{L}}_\epsilon[(\tau\X_s)_s]\,, \quad \abs{\X_s}^2=1\,,
\end{equation}
where the principal linear part of the evolution is given by $T_{m_\epsilon^{\rm n}}\p_s^4\sim \epsilon^{-1}\p_s^3$, and the tension term and $\mc{R}$ will be treated as remainders. We will make use of some general estimates for the linear part of \eqref{eq:new_evol}. 
Given a forcing function $f\in L^2_tH^{1/2}_s$, we consider the following linear equation on $\T$:
\begin{equation}\label{eq:u_eqn}
\begin{aligned}
  \p_t u &= -T_{m_\epsilon^{\rm n}}\p_s^4u + f \,, 
  \qquad u\big|_{t=0} = u^{\rm in}\,.
\end{aligned}
\end{equation}
\begin{lemma}[Linear estimates]\label{lem:linear}
For $T>0$ and any $u^{\rm in}\in H^2(\T)$, there exists a unique weak solution $u\in L^\infty_tH^2_s\cap L^2_tH^{7/2}_s((0,T)\times\T)$ to the linear equation \eqref{eq:u_eqn} satisfying 
\begin{equation}
  \norm{u-u^{\rm in}}_{H^2_s}\to 0 \quad \text{as }t\to 0^+\,.
\end{equation}
In addition, $u$ is continuous on $[0,T]$ with values in $H^2_s$ and satisfies the estimate 
\begin{equation}\label{eq:ubd}
\begin{aligned}
   \norm{u}_{L^\infty_tH^2_s}+ \abs{\log\epsilon}^{1/2}\norm{\p_s^4u}_{L^2_tH^{-1/2}_s}
   &\le c\big(\norm{u^{\rm in}}_{H^2_s} + (T^{1/2}+\abs{\log\epsilon}^{-1/2})\norm{f}_{L^2_tH^{1/2}_s} \big) 
\end{aligned}
\end{equation}
on $(0,T)\times\T$.
\end{lemma}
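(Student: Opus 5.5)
Since the operator $T_{m_\epsilon^{\rm n}}\p_s^4$ is a Fourier multiplier with symbol $\lambda_\epsilon(k):=m_\epsilon^{\rm n}(k)(2\pi k)^4\ge 0$, the equation \eqref{eq:u_eqn} decouples mode by mode, and I will solve it explicitly on the Fourier side. For each $k\in\Z$ we have the ODE $\p_t\wh u(k,t) = -\lambda_\epsilon(k)\wh u(k,t)+\wh f(k,t)$. Its unique solution is given by Duhamel's formula
\[
\wh u(k,t)=e^{-\lambda_\epsilon(k)t}\wh u^{\rm in}(k)+\int_0^te^{-\lambda_\epsilon(k)(t-t')}\wh f(k,t')\,dt'.
\]
Uniqueness of weak solutions follows immediately, since the difference of two solutions satisfies the homogeneous ODE with zero data in each mode. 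Existence and regularity then reduce to proving the a priori estimate \eqref{eq:ubd} for this formula, which I will do through a weighted energy identity.

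The energy identity comes from multiplying the $k$-th ODE by $(1+|k|^2)^2\overline{\wh u(k)}$, taking real parts and summing over $k$. This gives
\[
\tfrac12\tfrac{d}{dt}\norm{u}_{H^2}^2+\sum_k(1+|k|^2)^2\lambda_\epsilon(k)|\wh u(k)|^2=\mathrm{Re}\sum_k(1+|k|^2)^2\wh f(k)\overline{\wh u(k)}.
\]
I will bound the dissipation from below using Lemma~\ref{lem:multiplierbds}, which gives $m_\epsilon^{\rm n}(k)\ge c\min(\abs{\log\epsilon},\epsilon^{-1}|k|^{-1})\ge c\abs{\log\epsilon}\langle k\rangle^{-1}$; this is exactly the computation in the proof of Proposition~\ref{prop:positivity}. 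Hence, for $k\neq0$, the dissipation dominates $c\abs{\log\epsilon}|k|^8\langle k\rangle^{-1}|\wh u|^2$, so up to the harmless weight it controls $c\abs{\log\epsilon}\norm{\p_s^4u}_{H^{-1/2}}^2$. For the forcing term I split into the modes $k\neq0$ and the mode $k=0$.

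For $k\neq0$, I write the weight as $|k|^4\langle k\rangle^{-1/2}\cdot\langle k\rangle^{1/2}$. Cauchy--Schwarz then bounds this part by $\norm{\p_s^4 u}_{H^{-1/2}}\norm{f}_{H^{1/2}}$, and Young's inequality absorbs half of it into the dissipation, leaving $c\abs{\log\epsilon}^{-1}\norm{f}_{H^{1/2}}^2$. The $k=0$ mode (and more generally the low modes) is not controlled by the dissipation. Here I bound the term by $\norm{u}_{L^\infty_tH^2}\norm{f}_{L^1_tL^2}\le T^{1/2}\norm{u}_{L^\infty_tH^2}\norm{f}_{L^2_tH^{1/2}}$ after time integration, and absorb it into $\tfrac14\norm{u}_{L^\infty_tH^2}^2+cT\norm{f}^2$. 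Integrating in time and taking the supremum over $t$ then yields \eqref{eq:ubd} with the factor $(T^{1/2}+\abs{\log\epsilon}^{-1/2})$. One subtlety is that the $H^2$ norm only matches $|k|^4$ weights up to lower-order terms; I will handle this by using $\norm{u}_{H^2}^2\sim|\wh u(0)|^2+\sum_{k\ne0}|k|^4|\wh u|^2$ and running the identity with the weight $|k|^4$ for $k\neq0$ and weight $1$ for $k=0$.

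The remaining regularity claims follow from the explicit formula. The dissipation bound combined with Lemma~\ref{lem:highk_refinement} (which gives $\lambda_\epsilon\gtrsim\epsilon^{-1}|k|^3$ at high frequency) yields $u\in L^2_tH^{7/2}_s$ for fixed $\epsilon$. Continuity in $H^2$ and $\norm{u(t)-u^{\rm in}}_{H^2}\to0$ follow by dominated convergence applied mode by mode, using $|e^{-\lambda t}-1|\le 2$ together with the square-summability of $|k|^2\wh u^{\rm in}$; the Duhamel term is handled by the estimate above on $[0,t]$, which tends to $0$ as $t\to0$. The only real obstacle is the bookkeeping of the $\epsilon$-dependence. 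I expect it to work because the $k=0$ mode is where the $T^{1/2}$ factor enters, while the logarithmic lower bound on $m_\epsilon^{\rm n}$ is what produces the $\abs{\log\epsilon}^{-1/2}$ factor.
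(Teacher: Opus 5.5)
Your proposal is correct and takes essentially the same route as the paper: an energy identity at the $\dot H^2$ level (your $|k|^4$ weight on the nonzero modes), the lower bound $m_\epsilon^{\rm n}(k)\ge c\abs{\log\epsilon}\langle k\rangle^{-1}$ from Lemma~\ref{lem:multiplierbds} combined with Young's inequality to produce the $\abs{\log\epsilon}^{-1/2}$ factor, and a separate treatment of the mean that yields the $T^{1/2}$ factor. The paper handles the mean by integrating $\p_t u_0=\int_\T f\,ds$ directly rather than by absorption, and your Fourier/Duhamel framing simply makes rigorous the existence, uniqueness and continuity claims that the paper calls immediate from the explicit solution formula.
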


\begin{proof}
The mapping properties of the solution operator for $u$ are immediate from the explicit solution formula using the Fourier multiplier behavior of Lemma \ref{lem:multiplierbds}, so it remains to show the bound \eqref{eq:ubd}.
We begin with a bound for $\p_s^2u$. Upon differentiating \eqref{eq:u_eqn} in $s$, we have that $\p_s^2u$ satisfies
\begin{equation}
\begin{aligned}
  \p_t (\p_s^2u) &= -T_{m_\epsilon^{\rm n}}\p_s^4(\p_s^2u) + \p_s^2f \\
  \p_s^2u\big|_{t=0} &= \p_s^2u^{\rm in}\,.
\end{aligned}
\end{equation}
Multiplying by $\p_s^2u$ and integrating by parts, we have that $\p_s^2u$ satisfies the energy identity
\begin{equation}\label{eq:u_energy}
   \frac{1}{2}\p_t\int_\T (\p_s^2u)^2\,ds+\int_\T\p_s^4u\,T_{m_\epsilon^{\rm n}}\p_s^4u\,ds = \int_\T \p_s^4uf\,ds\,,
\end{equation}
where we have used that $\p_s$ commutes with $T_{m_\epsilon^{\rm n}}$. We may bound the term involving $T_{m_\epsilon^{\rm n}}$ as 
\begin{equation}
\begin{aligned}
  \int_\T\p_s^4u\,T_{m_\epsilon^{\rm n}}\p_s^4u\,ds &= 
  \sum_{\abs{k}=0}^\infty \abs{\mc{F}[u](k)}^2\abs{k}^8m_\epsilon^{\rm n}(k) \\
  &\ge c\abs{\log\epsilon}\sum_{\abs{k}=0}^{\floor{\frac{1}{2\pi\epsilon}}} \abs{\mc{F}[u](k)}^2\abs{k}^8 + c\,\epsilon^{-1}\sum_{\abs{k}=\floor{\frac{1}{2\pi\epsilon}}+1}^{\infty} \abs{k}^7\abs{\mc{F}[u](k)}^2 \\
  &\ge c\abs{\log\epsilon}\norm{\p_s^4u}_{H^{-1/2}(\T)}^2\,.
\end{aligned}
\end{equation}
Integrating \eqref{eq:u_energy} in $t$, we then obtain the bound
\begin{equation}
\begin{aligned}
   &\norm{\p_s^2u}_{L^\infty_tL^2_s}^2+ \abs{\log\epsilon}\norm{\p_s^4u}_{L^2_tH^{-1/2}_s}^2
   \le c\big(\norm{\p_s^2u^{\rm in}}_{L^\infty_tL^2_s}^2 +\norm{\p_s^4u}_{L^2_tH^{-1/2}_s}\norm{f}_{L^2_tH^{1/2}_s} \big)\,.
\end{aligned}
\end{equation}
An application of Young's inequality yields \eqref{eq:ubd} with $L^\infty_t\dot H^2_s$ in place of $L^\infty_tH^2_s$.

It remains to bound the mean of $u$, which we denote by $u_0$. We have that $u_0$ satisfies
\begin{equation}
  \p_tu_0 = \int_\T f\,ds\,, \qquad u_0\big|_{t=0}=u_0^{\rm in}\,,
\end{equation}
so that
\begin{equation}
  \norm{u_0}_{L^\infty_t} \le \abs{u_0^{\rm in}} + T^{1/2}\norm{f}_{L^2_tL^2_s}\,.
\end{equation}
\end{proof}

\subsection{Local well-posedness}\label{subsec:LWP}
Here we establish local-in-time well-posedness for the evolution \eqref{eq:evolution}, rewritten as \eqref{eq:new_evol}.
\begin{lemma}[Local well-posedness]\label{lem:LWP}
Given $\epsilon>0$ and $\X^{\rm in}\in H^2_s(\T)$, there exists $T>0$ and a unique solution $\X\in C([0,T];H^2_s(\T))\cap L^2_t\dot H^{7/2}_s$ to \eqref{eq:new_evol} with initial condition $\X^{\rm in}$.\\
Given $T_{\rm f}>0$, if $\X\in C([0,T];H^2_s(\T))\cap L^2_t\dot H^{7/2}_s$ is a solution for all $T\in (0,T_{\rm f})$ and 
\begin{equation}\label{eq:continuation}
  \norm{\X}_{L^\infty_tH^2_s\cap L^2_t\dot H^{7/2}_s([0,T]\times \T)}<+\infty\,, 
\end{equation} 
then the solution may be extended continuously in time past $T_{\rm f}$, i.e. there exists $\delta>0$ such that $\X\in C([0,T_{\rm f}+\delta];H^2_s(\T))\cap L^2_t\dot H^{7/2}_s$. 
\end{lemma}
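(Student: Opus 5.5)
We prove Lemma \ref{lem:LWP} by a contraction mapping argument built on the linear theory of Lemma \ref{lem:linear}, treating the remainder $\mc{R}[\X]$ and the tension term $\overline{\mc{L}}_\epsilon[(\tau\X_s)_s]$ in \eqref{eq:new_evol} as forcing.

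\textbf{Setup.} Fix $T>0$ small and $M>0$ large, and work in the closed set
\[
\mc{X}_T=\big\{\X:\ \norm{\X}_{L^\infty_tH^2_s}+\norm{\X}_{L^2_t\dot H^{7/2}_s}\le M,\ \X|_{t=0}=\X^{\rm in}\big\}.
\]
Given $\X\in\mc{X}_T$, define $\Phi(\X)=u$ as the solution of \eqref{eq:u_eqn} with data $u^{\rm in}=\X^{\rm in}$ and forcing
\[
f=-\mc{R}[\X]+\overline{\mc{L}}_\epsilon[(\tau\X_s)_s],
\]
where $\tau$ solves \eqref{eq:TDP} for $\X$.

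\textbf{Handling inextensibility.} Lemmas \ref{lem:TDP} and \ref{lem:op_decomp} require $\abs{\X_s}=1$, which is not preserved under the linear map $\Phi$. The first option is to iterate only over curves with $\abs{\X_s}=1$, which fails for this reason. Instead, I would take the standard route of \cite{mori2023well,albritton2025rods}:
\begin{enumerate}
\item Write the forcing in the general form
\[
-\overline{\mc{L}}_\epsilon[(\X_{sss}-\tau\X_s)_s]+T_{m_\epsilon^{\rm n}}\p_s^4\X,
\]
with $\P_{\X_s}$ replaced by the normalized projection $\X_s\otimes\X_s/\abs{\X_s}^2$, which is well defined for curves near unit speed.
\item Keep $\mc{X}_T$ within a small $L^\infty_tH^2_s$ neighborhood of $\X^{\rm in}$, so that $\abs{\X_s}\ge 1/2$.
\item At the fixed point, recover $\abs{\X_s}^2=1$ a posteriori. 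Set $g=\abs{\X_s}^2-1$ and use the tension equation \eqref{eq:TDP}, which gives $\p_t\X_s\cdot\X_s=0$ by construction, to obtain $\p_t g=0$. Since $g(0)=0$, this yields $g\equiv 0$.
\end{enumerate}
Along the way, the estimates of Lemmas \ref{lem:TDP} and \ref{lem:op_decomp} must be rechecked for curves with $\abs{\X_s}$ near $1$. I expect only harmless lower-order terms involving $g$.

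\textbf{Forcing bound.} The key estimate is $\norm{f}_{L^2_tH^{1/2}_s}$.
\begin{itemize}
\item By \eqref{eq:RX_bound}, $\norm{\mc{R}[\X]}_{H^{1/2}}$ involves $\norm{\X_{sss}}_{\dot H^{1/2}}^{2/3}$. By H\"older in time,
\[
\norm{\mc{R}}_{L^2_tH^{1/2}_s}\le c\,\epsilon^{-3/2}C(M)\,(T^{1/6}+T^{1/2}).
\]
\item For the tension term, first apply Lemma \ref{lem:multiplierbds}:
\[
\norm{\overline{\mc{L}}_\epsilon[(\tau\X_s)_s]}_{H^{1/2}}\le c\,\epsilon^{-1}\norm{(\tau\X_s)_s}_{H^{-1/2}}\le c\,\epsilon^{-1}\norm{\X_s}_{H^1}\norm{\tau}_{H^{1/2}},
\]
using Proposition \ref{prop:H_minus_halfbd}.
\item Then \eqref{eq:tau_bound} controls $\norm{\tau}_{H^{1/2}}$ by at most $\norm{\X_{sss}}_{\dot H^{1/2}}^{3/4}$. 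Time integration gives a factor $T^{1/8}$.
\end{itemize}
Hence the forcing is subcritical. Lemma \ref{lem:linear} then gives
\[
\norm{\Phi(\X)}\le c\norm{\X^{\rm in}}_{H^2}+C(\epsilon,M)\,T^{1/8},
\]
so choosing $M=2c\norm{\X^{\rm in}}_{H^2}$ and $T$ small shows $\Phi$ maps $\mc{X}_T$ to itself.

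\textbf{Contraction.} The same computation applied to differences, using \eqref{eq:tau_lip} and \eqref{eq:RX_lip}, gives a contraction. Every term there carries at most $\norm{(\X_1-\X_2)_{sss}}_{\dot H^{1/2}}^{3/4}$, so again a positive power of $T$ appears. Two points need care:
\begin{itemize}
\item The ``nearby'' hypothesis in those lemmas holds because both curves lie in a small $H^2$ ball around $\X^{\rm in}$ for short times, by continuity of the linear flow.
\item The smallness of the $L^\infty_tH^2_s$ difference from the initial data must come from the linear part. I would subtract the free evolution $e^{-tT_{m_\epsilon^{\rm n}}\p_s^4}\X^{\rm in}$ and use that its $L^2_t\dot H^{7/2}$ norm tends to zero as $T\to0$.
\end{itemize}
Uniqueness follows from the same difference estimate, and continuity in $H^2$ from Lemma \ref{lem:linear}.

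\textbf{Continuation.} The existence time depends on $\norm{\X^{\rm in}}_{H^2}$ and on the smallness of the free-flow $L^2_t\dot H^{7/2}$ norm. Under \eqref{eq:continuation}, the second requirement is handled by restarting from $\X(T_{\rm f}-\eta)$ for $\eta$ small. Absolute continuity of $\int\norm{\X_{sss}}_{\dot H^{1/2}}^2\,dt$ then makes the forcing small uniformly near $T_{\rm f}$, which allows extension past $T_{\rm f}$.

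I expect the main obstacle to be this uniformity of the existence time, together with keeping inextensibility rigorous inside the iteration.
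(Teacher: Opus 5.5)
Your fixed-point estimates coincide with the paper's proof. The paper also contracts in $C([0,T];H^2_s)\cap L^2_t\dot H^{7/2}_s$ and bounds $\mc{R}[\X]$ and $\overline{\mc{L}}_\epsilon[(\tau\X_s)_s]$ in $L^2_tH^{1/2}_s$ via \eqref{eq:RX_bound} and \eqref{eq:tau_bound}, gaining $T^{1/6}$ and $T^{1/8}$. It then feeds these into Lemma~\ref{lem:linear} and contracts with \eqref{eq:tau_lip} and \eqref{eq:RX_lip}. The paper does all this directly on the ball $\bm{B}_{0,1}$, applying Lemmas~\ref{lem:TDP} and \ref{lem:op_decomp} to the iterates without discussing $\abs{\X_s}=1$. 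You go beyond the paper by adding a mechanism for inextensibility, and that mechanism fails as described.

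\textbf{Where the mechanism fails.} Every subcritical power in \eqref{eq:tau_bound} and \eqref{eq:RX_bound} comes from \eqref{eq:inex_ids}:
\begin{itemize}
\item $\P_{\X_s}\p_s^4\X=-3(\X_{ss}\cdot\X_{sss})\X_s$, used in $\mc{R}_b$, $\mc{R}_c$ and on the right side of the tension problem;
\item $\X_s\cdot\X_{sss}=-\abs{\X_{ss}}^2$, used in $\mc{R}_a$.
\end{itemize}
For a curve with $g=\abs{\X_s}^2-1\neq 0$, these become $\X_s\cdot\p_s^4\X=\tfrac12\p_s^3 g-3\X_{ss}\cdot\X_{sss}$ and $\X_s\cdot\X_{sss}=\tfrac12 g_{ss}-\abs{\X_{ss}}^2$. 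Since $\p_s^3 g=2\X_s\cdot\p_s^4\X+6\X_{ss}\cdot\X_{sss}$, these $g$-terms are top order, not harmless lower-order terms.

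Concretely, in your general form, $\mc{R}[\X]=\overline{\mc{L}}_\epsilon[\p_s^4\X]-T_{m_\epsilon^{\rm n}}\p_s^4\X$ equals $\P_{\X_s}(T_{m_\epsilon^{\rm t}}-T_{m_\epsilon^{\rm n}})\P_{\X_s}\p_s^4\X$ plus commutator terms. The multipliers $m_\epsilon^{\rm t}$ and $m_\epsilon^{\rm n}$ do not agree to leading order at high wavenumber. By my own large-argument expansion of the Bessel formulas (not stated in the paper), $m_\epsilon^{\rm t}(k)\sim(8\pi^2\epsilon\abs{k})^{-1}$ while $m_\epsilon^{\rm n}(k)\sim(6\pi^2\epsilon\abs{k})^{-1}$. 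So this piece is a third-order operator with coefficient of size $\epsilon^{-1}$. Its $L^2_tH^{1/2}_s$ norm is comparable to $\epsilon^{-1}\norm{g}_{L^2_t\dot H^{5/2}_s}$, a full copy of $\norm{\X}_{L^2_t\dot H^{7/2}_s}$ with no power of $T$. Likewise, $\norm{\P_{\X_s}\X_{ssss}}_{H^{-1/2}}$ on the right of the tension problem costs a full top-order norm, so \eqref{eq:tau_bound} loses its $3/4$ power. Lemma~\ref{lem:linear} returns only a factor $\abs{\log\epsilon}^{-1/2}$ from $L^2_tH^{1/2}_s$ forcing, so neither the self-map nor the contraction closes by shrinking $T$.

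\textbf{What is missing.} Your two requirements pull in opposite directions.
\begin{itemize}
\item The non-reduced equation and tension problem make $\p_t g=0$ automatic at the fixed point, but they destroy the estimates.
\item The reduced formulas, which are what Lemmas~\ref{lem:TDP} and \ref{lem:op_decomp} actually estimate, keep the estimates. But then the fixed point solves the true equation only up to terms vanishing when $g=0$, and $\p_t g=0$ is no longer automatic.
\end{itemize}
To close the argument you need a separate propagation step. Derive the evolution of $g$ from the reduced system, which is linear in $g$ with $\X$-dependent coefficients, and prove a uniqueness or energy estimate showing that $g(0)=0$ forces $g\equiv0$. Your proposal does not supply this.

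A smaller point: the smallness of $\norm{e^{-tT_{m_\epsilon^{\rm n}}\p_s^4}\X^{\rm in}-\X^{\rm in}}_{L^\infty_tH^2_s}$ comes from strong continuity of the semigroup on $H^2$. It does not follow from the vanishing of the free flow's $L^2_t\dot H^{7/2}_s$ norm. Because this modulus of continuity is not uniform on bounded sets, your continuation step also needs the limit $\X(T_{\rm f})$ to exist in $H^2$ (via Duhamel), and then a restart from $T_{\rm f}$.
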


\begin{proof}
For $T>0$, let $\Phi^T(\X)$ denote the time-$T$ solution map for the evolution \eqref{eq:new_evol}. We proceed by a fixed point argument in the space $\mc{Y}_0\cap\mc{Y}_1$, where 
  \begin{equation}
  \begin{aligned}
    \mc{Y}_0(T) &= C([0,T];H^2_s)\,, \qquad \norm{\cdot}_{\mc{Y}_0} = \sup_{t\in[0,T]}\norm{\cdot}_{H^2_s(\T)}\\
    \mc{Y}_1(T) &= L^2_t\dot H^{7/2}_s\,, \qquad \norm{\cdot}_{\mc{Y}_1} = \bigg(\int_0^T\norm{\cdot}_{\dot H^{7/2}_s(\T)}^2\,dt'\bigg)^{1/2}\,.
  \end{aligned}
  \end{equation}
For some $M_0,M_1>0$, we consider $\X$ belonging to the closed ball
  \begin{equation}
    {\bm B}_{0,1} = \{ \X\in \mc{Y}_0(T)\cap\mc{Y}_1(T) \,:\, \norm{\X}_{\mc{Y}_0}\le M_0\,, \; \norm{\X}_{\mc{Y}_1}\le M_1 \}\,, 
  \end{equation}
and show that for some choice of $T$, the solution map $\Phi^T(\cdot)$ maps ${\bm B}_{0,1}$ to itself.

We begin with the tension terms. First, we may note the following general bounds. Using Parseval's theorem, the form \eqref{eq:Tmh} of $T_{m_\epsilon^{\rm t}}$, and the multiplier bounds of Lemma \ref{lem:multiplierbds}, we have that for any $\bm{f}\in H^{-1/2}(\T)$,
 \begin{equation}
 \begin{aligned}
   \norm{T_{m_\epsilon^{\rm t}}(\P_{\X_s}\bm{f})}_{H^{1/2}}^2 &= \sum_{\abs{k}=0}^\infty m_\epsilon^{\rm t}(k)^2(1+\abs{k}^2)^{1/2}\abs{\mc{F}[\P_{\X_s}\bm{f}]}^2\\
   &\le c\abs{\log\epsilon}^2\sum_{\abs{k}=0}^{\floor{\frac{1}{2\pi\epsilon}}} (1+\abs{k}^2)^{1/2}\abs{\mc{F}[\P_{\X_s}\bm{f}]}^2 \\
   &\quad + c\,\epsilon^{-2}\sum_{\abs{k}=\floor{\frac{1}{2\pi\epsilon}}+1}^\infty \abs{k}^{-2}(1+\abs{k}^2)^{1/2}\abs{\mc{F}[\P_{\X_s}\bm{f}]}^2\\
   &\le c\,\epsilon^{-2}\abs{\log\epsilon}^2\norm{\P_{\X_s}\bm{f}}_{H^{-1/2}}^2\,.
 \end{aligned}
 \end{equation}
 Similarly,
  \begin{equation}
   \norm{T_{m_\epsilon^{\rm n}}(\P_{\X_s}^\perp\bm{f})}_{H^{1/2}}^2 
   \le c\,\epsilon^{-2}\abs{\log\epsilon}^2\norm{\P_{\X_s}^\perp\bm{f}}_{H^{-1/2}}^2\,.
 \end{equation}
We may thus estimate
 \begin{equation}
\begin{aligned}
   \norm{\overline{\mc{L}}_\epsilon[\bm{f}]}_{H^{1/2}}
   &\le c(1+\norm{\X_s}_{H^1})\big(\norm{T_{m_\epsilon^{\rm t}}(\P_{\X_s}\bm{f})}_{H^{1/2}} +\norm{T_{m_\epsilon^{\rm n}}(\P_{\X_s}^\perp\bm{f})}_{H^{1/2}} \big)\\
   &\le c\,\epsilon^{-1}\abs{\log\epsilon}(1+\norm{\X_s}_{H^1})\big(\norm{\P_{\X_s}\bm{f}}_{H^{-1/2}}+\norm{\P_{\X_s}^\perp\bm{f}}_{H^{-1/2}}\big)\\
   &\le c\,\epsilon^{-1}\abs{\log\epsilon}(1+\norm{\X_s}_{H^1}^2)\norm{\bm{f}}_{H^{-1/2}}\,.
  \end{aligned}
 \end{equation}  
Applying this bound to the tension term $(\tau\X_s)_s$, we have 
\begin{equation}
\begin{aligned}
   &\norm{\overline{\mc{L}}_\epsilon[(\tau\X_s)_s]}_{H^{1/2}_s}
   \le c\,\epsilon^{-1}\abs{\log\epsilon}(1+\norm{\X_s}_{H^1_s}^2)\norm{\tau\X_s}_{H^{1/2}_s}\\
   &\qquad\le c\,\epsilon^{-2}\abs{\log\epsilon}(1+\norm{\X_s}_{H^1_s}^6)\big(\norm{\X_{sss}}_{\dot H^{1/2}_s}^{3/4}\norm{\X_s}_{H^1_s}^{9/4} + \norm{\X_{sss}}_{\dot H^{1/2}_s}^{2/3}\norm{\X_s}_{H^1_s}^{13/3} \big)\,,
  \end{aligned}
 \end{equation}
by Lemma \ref{lem:TDP}.
 Integrating in time from 0 to $T$, for $\X\in \bm{B}_{0,1}$, we have 
\begin{equation}
\begin{aligned}
 \norm{\overline{\mc{L}}_\epsilon[(\tau\X_s)_s]}_{L^2_tH^{1/2}_s} & 
 \le c\,\epsilon^{-2}\abs{\log\epsilon}(1+\norm{\X_s}_{L^\infty_tH^1_s}^6)\big(T^{1/8}\norm{\X_{sss}}_{L^2_t\dot H^{1/2}_s}^{3/4}\norm{\X_s}_{L^\infty_tH^1_s}^{9/4} \\
 &\quad + T^{1/6}\norm{\X_{sss}}_{L^2_t\dot H^{1/2}_s}^{2/3}\norm{\X_s}_{L^\infty_tH^1_s}^{13/3} \big)\\
 &\le c\,\epsilon^{-2}\abs{\log\epsilon}(1+M_0^6)\big(T^{1/8}M_1^{3/4}M_0^{9/4} + T^{1/6}M_1^{2/3}M_0^{13/3} \big)\,.
\end{aligned}
\end{equation}

For the remainder terms $\mc{R}[\X]$, by Lemma \ref{lem:op_decomp}, we additionally have
\begin{equation}
\begin{aligned}
  \norm{\mc{R}[\X]}_{L^2_tH^{1/2}_s} &\le 
  c\,\epsilon^{-3/2}\big(T^{1/6}\norm{\X_{sss}}_{L^2_t\dot H^{1/2}_s}^{2/3}\norm{\X_s}_{L^\infty_tH^1_s}^{4/3}\\
  &\qquad\quad  + T^{1/2}\norm{\X_s}_{L^\infty_tH^1_s}^2 \big)(1+\norm{\X_s}_{L^\infty_tH^1_s}^2)\\
  &\le c\,\epsilon^{-3/2}\big(T^{1/6}M_1^{2/3}M_0^{4/3} + T^{1/2}M_0^2 \big)(1+M_0^2)\,.
\end{aligned}
\end{equation}

Altogether, using Lemma \ref{lem:linear}, we may bound 
\begin{equation}
\begin{aligned}
  \norm{\X}_{\mc{Y}_0} &\le \norm{\X^{\rm in}}_{H^2_s} + c\,\epsilon^{-3/2}(T^{1/2}+\abs{\log\epsilon}^{-1/2})\big(T^{1/6}M_1^{2/3}M_0^{4/3} + T^{1/2}M_0^2 \big)(1+M_0^2)\\
  &\quad + c\,\epsilon^{-2}\abs{\log\epsilon}(T^{1/2}+\abs{\log\epsilon}^{-1/2})(1+M_0^6)\big(T^{1/8}M_1^{3/4}M_0^{9/4} + T^{1/6}M_1^{2/3}M_0^{13/3} \big) \\
  &\le \norm{\X^{\rm in}}_{H^2_s} + c_0(M_0,M_1)\,\epsilon^{-2}\abs{\log\epsilon}\,T^{1/8}\,,\\
  \norm{\X}_{\mc{Y}_1} &\le \abs{\log\epsilon}^{-1/2}\norm{e^{-t\,T_{m_\epsilon^{\rm n}}\p_s^4}\X^{\rm in}}_{\mc{Y}_1}\\
  &\quad  + c\,\epsilon^{-3/2}\abs{\log\epsilon}^{-1/2}(T^{1/2}+\abs{\log\epsilon}^{-1/2})\big(T^{1/6}M_1^{2/3}M_0^{4/3} + T^{1/2}M_0^2 \big)(1+M_0^2)\\
  &\quad + c\,\epsilon^{-2}\abs{\log\epsilon}^{1/2}(T^{1/2}+\abs{\log\epsilon}^{-1/2})(1+M_0^6)\big(T^{1/8}M_1^{3/4}M_0^{9/4} + T^{1/6}M_1^{2/3}M_0^{13/3} \big)\\
  &\le \abs{\log\epsilon}^{-1/2}M_1^{\rm in}(T) + c_1(M_0,M_1)\,\epsilon^{-2}\abs{\log\epsilon}^{1/2}\,T^{1/8} \,,
\end{aligned}
\end{equation}
where $M_1^{\rm in}(T):= \norm{e^{-t\,T_{m_\epsilon^{\rm n}}\p_s^4}\X^{\rm in}}_{\mc{Y}_1}\to 0$ as $T\to 0^+$ since $\norm{\cdot}_{\mc{Y}_1}=\norm{\cdot}_{L^2_t\dot H^{7/2}_s}$ is an integral quantity in $t$. 
Taking $M_0 = 4\norm{\X^{\rm in}}_{H^2_s}$ and then choosing $T$ small enough that  
\begin{equation}\label{eq:Tcond1}
\begin{aligned}
  c_0(M_0,M_1)\,\epsilon^{-2}\abs{\log\epsilon}\,T^{1/8}&\le \frac{M_0}{2}
  \qquad \text{and}\\ 
  \abs{\log\epsilon}^{-1/2}M_1^{\rm in}(T) + c_1(M_0,M_1)\,\epsilon^{-2}\abs{\log\epsilon}^{1/2}\,T^{1/8}&\le \frac{3M_1}{4}\,,
\end{aligned}
\end{equation}
we obtain that the time-$T$ solution map $\Phi^T(\X)$ maps $\bm{B}_{0,1}$ to itself.

We next show that the solution map $\Phi^T$ is a contraction on $\bm{B}_{0,1}$. Again, we begin with the tension term. 
First, for any $\bm{f}_1$, $\bm{f}_2:\T\to \R^3$ along corresponding nearby curves $\X_1$, $\X_2$, we may write
\begin{equation}
\begin{aligned}
  \overline{\mc{L}}_\epsilon[\bm{f}_1]-\overline{\mc{L}}_\epsilon[\bm{f}_2] &=  \P_{(\X_1)_s}T_{m_\epsilon^{\rm t}}(\P_{(\X_1)_s}\bm{f}_1) - \P_{(\X_2)_s}T_{m_\epsilon^{\rm t}}(\P_{(\X_2)_s}\bm{f}_2)\\
  &\quad + \P_{(\X_1)_s}^\perp T_{m_\epsilon^{\rm n}}(\P_{(\X_1)_s}^\perp \bm{f}_1) - \P_{(\X_2)_s}^\perp T_{m_\epsilon^{\rm n}}(\P_{(\X_2)_s}^\perp \bm{f}_2)\,.
\end{aligned}
\end{equation}
We may then estimate
\begin{equation}
\begin{aligned}
  &\norm{\overline{\mc{L}}_\epsilon[\bm{f}_1]-\overline{\mc{L}}_\epsilon[\bm{f}_2]}_{H^{1/2}} \le  
  \norm{\P_{(\X_1)_s}- \P_{(\X_2)_s}}_{H^1}\norm{T_{m_\epsilon^{\rm t}}(\P_{(\X_1)_s}\bm{f}_1)}_{H^{1/2}} \\
  &\qquad +
  \norm{\P_{(\X_2)_s}}_{H^1}\big(\norm{T_{m_\epsilon^{\rm t}}\big((\P_{(\X_1)_s}-\P_{(\X_2)_s})\bm{f}_1\big)}_{H^{1/2}}+
  \norm{T_{m_\epsilon^{\rm t}}(\P_{(\X_2)_s}(\bm{f}_1-\bm{f}_2))}_{H^{1/2}}\big)\\
  &\quad\qquad  + \norm{\P_{(\X_1)_s}^\perp- \P_{(\X_2)_s}^\perp}_{H^1}\norm{T_{m_\epsilon^{\rm n}}(\P_{(\X_1)_s}^\perp\bm{f}_1)}_{H^{1/2}} \\
  &\qquad +
  \norm{\P_{(\X_2)_s}^\perp}_{H^1}\big(\norm{T_{m_\epsilon^{\rm n}}\big((\P_{(\X_1)_s}^\perp-\P_{(\X_2)_s}^\perp)\bm{f}_1\big)}_{H^{1/2}}+
  \norm{T_{m_\epsilon^{\rm n}}(\P_{(\X_2)_s}^\perp(\bm{f}_1-\bm{f}_2))}_{H^{1/2}}\big)\\
  &\le c\,\epsilon^{-1}\abs{\log\epsilon}\norm{(\X_1-\X_2)_s}_{H^1}\big(\norm{\P_{(\X_1)_s}\bm{f}_1}_{H^{-1/2}}+\norm{\P_{(\X_1)_s}^\perp\bm{f}_1}_{H^{-1/2}}\big)\\
  &\qquad + c\,\epsilon^{-1}\abs{\log\epsilon}(1+\norm{(\X_2)_s}_{H^1}) \norm{(\P_{(\X_1)_s}-\P_{(\X_2)_s})\bm{f}_1}_{H^{-1/2}}  \\
  &\qquad + c\,\epsilon^{-1}\abs{\log\epsilon}(1+\norm{(\X_2)_s}_{H^1})\big( \norm{\P_{(\X_2)_s}(\bm{f}_1-\bm{f}_2)}_{H^{-1/2}} + \norm{\P_{(\X_2)_s}^\perp(\bm{f}_1-\bm{f}_2)}_{H^{-1/2}}\big)\\
  &\le c\,\epsilon^{-1}\abs{\log\epsilon}\norm{(\X_1-\X_2)_s}_{H^1}(1+\norm{(\X_1)_s}_{H^1}+\norm{(\X_2)_s}_{H^1})\norm{\bm{f}_1}_{H^{-1/2}}\\
  &\qquad + c\,\epsilon^{-1}\abs{\log\epsilon}(1+\norm{(\X_2)_s}_{H^1})^2\norm{\bm{f}_1-\bm{f}_2}_{H^{-1/2}}\,. 
\end{aligned}
\end{equation}
Plugging in $\bm{f}_1=(\tau_1(\X_1)_s)_s$ and $\bm{f}_2=(\tau_2(\X_2)_s)_s$ for $\tau_1$, $\tau_2$ satisfying the tension equations \eqref{eq:TDP} corresponding to $\X_1$ and $\X_2$, respectively, we have
\begin{equation}
\begin{aligned}
  &\norm{\overline{\mc{L}}_\epsilon[(\tau_1(\X_1)_s)_s]-\overline{\mc{L}}_\epsilon[(\tau_2(\X_2)_s)_s]}_{H^{1/2}_s} \\
  &\le c\,\epsilon^{-1}\abs{\log\epsilon}\norm{(\X_1-\X_2)_s}_{H^1_s}(1+\norm{(\X_1)_s}_{H^1_s}+\norm{(\X_2)_s}_{H^1_s})\norm{\tau_1(\X_1)_s}_{\dot H^{1/2}_s}\\
  &\qquad + c\,\epsilon^{-1}\abs{\log\epsilon}(1+\norm{(\X_2)_s}_{H^1_s})^2\norm{\tau_1(\X_1)_s-\tau_2(\X_2)_s}_{\dot H^{1/2}_s}\\
  &\le c\,\epsilon^{-1}\abs{\log\epsilon}\norm{(\X_1-\X_2)_s}_{H^1_s}(1+\norm{(\X_1)_s}_{H^1_s}+\norm{(\X_2)_s}_{H^1_s})^2(\norm{\tau_1}_{H^{1/2}_s}+\norm{\tau_2}_{H^{1/2}_s})\\
  &\qquad + c\,\epsilon^{-1}\abs{\log\epsilon}(1+\norm{(\X_2)_s}_{H^1_s})^2\norm{(\X_1)_s}_{H^1_s}\norm{\tau_1-\tau_2}_{H^{1/2}_s}\\
  &\le c\,\epsilon^{-2}\abs{\log\epsilon}(1+\norm{(\X_1)_s}_{H^1_s}^{12}+\norm{(\X_2)_s}_{H^1_s}^{12})\times\\
  &\quad \times \bigg[\norm{(\X_1-\X_2)_{sss}}_{\dot H^{1/2}_s}^{1/3}\norm{(\X_1-\X_2)_s}_{H^1_s}^{2/3}(\norm{(\X_1)_{sss}}_{\dot H^{1/2}_s}^{1/3}+\norm{(\X_2)_{sss}}_{\dot H^{1/2}_s}^{1/3}) \\
  &+ \norm{(\X_1-\X_2)_s}_{H^1_s}\bigg(\norm{(\X_1)_{sss}}_{\dot H^{1/2}_s}^{2/3} + \norm{(\X_2)_{sss}}_{\dot H^{1/2}_s}^{2/3} + \norm{(\X_1)_{sss}}_{\dot H^{1/2}_s}^{3/4}+\norm{(\X_2)_{sss}}_{\dot H^{1/2}_s}^{3/4}\bigg) \\
  &\qquad  + \norm{(\X_1-\X_2)_{sss}}_{\dot H^{1/2}_s}^{3/4}\norm{(\X_1-\X_2)_s}_{H^1_s}^{1/4} \bigg] \,.
\end{aligned}
\end{equation}
Here we have used the tension bounds of Lemma \ref{lem:TDP}. 
Integrating in time from $0$ to $T$, we obtain the $L^2_t$ bound 
\begin{equation}
\begin{aligned}
  &\norm{\overline{\mc{L}}_\epsilon[(\tau_1(\X_1)_s)_s]-\overline{\mc{L}}_\epsilon[(\tau_2(\X_2)_s)_s]}_{L^2_tH^{1/2}_s} \\
  &\quad\le c\,\epsilon^{-2}\abs{\log\epsilon}(1+M_0^{12})\bigg[T^{1/6}\norm{\X_1-\X_2}_{\mc{Y}_1}^{1/3}\norm{\X_1-\X_2}_{\mc{Y}_0}^{2/3}M_1^{1/3} \\
  &\quad\qquad + T^{1/8}\norm{\X_1-\X_2}_{\mc{Y}_1}^{3/4}\norm{\X_1-\X_2}_{\mc{Y}_0}^{1/4} + \norm{\X_1-\X_2}_{\mc{Y}_0}\big(T^{1/6}M_1^{2/3} + T^{1/8}M_1^{3/4}\big)  \bigg]\,.
\end{aligned}
\end{equation}

For the remainder term $\mc{R}[\X]$, integrating the Lipschitz bound of Lemma \ref{lem:op_decomp} in time from 0 to $T$, we have
  \begin{equation}
  \begin{aligned}
  &\norm{\mc{R}[\X_1]-\mc{R}[\X_2]}_{L^2_tH^{1/2}_s}\\
  &\quad \le c\,\epsilon^{-3/2}(1+M_0^{10/3})\,T^{1/6}\bigg[\norm{\X_1-\X_2}_{\mc{Y}_0}\big(M_1 + T^{1/3}(M_0^{2/3}+1)\big)\\
  &\qquad\quad + \norm{\X_1-\X_2}_{\mc{Y}_0}^{2/3}\norm{\X_1-\X_2}_{\mc{Y}_1}^{1/3}M_1^{1/3} 
  + \norm{\X_1-\X_2}_{\mc{Y}_1}^{2/3}\norm{\X_1-\X_2}_{\mc{Y}_0}^{1/3}\bigg]\,.
  \end{aligned}
  \end{equation}

Altogether, using Lemma \ref{lem:linear}, we may estimate 
\begin{equation}
\begin{aligned}
  \norm{\Phi^T(\X_1)-\Phi^T(\X_2)}_{\mc{Y}_0\cap\mc{Y}_1}
  &\le \norm{\overline{\mc{L}}_\epsilon[(\tau_1(\X_1)_s)_s]-\overline{\mc{L}}_\epsilon[(\tau_2(\X_2)_s)_s]}_{L^2_tH^{1/2}_s}\\
  &\qquad+ \norm{\mc{R}[\X_1]-\mc{R}[\X_2]}_{L^2_tH^{1/2}_s}\\
  &\le c(M_0,M_1)\,\epsilon^{-2}\abs{\log\epsilon}\,T^{1/8}\norm{\X_1-\X_2}_{\mc{Y}_0\cap\mc{Y}_1}\,.
\end{aligned}
\end{equation}
Taking $T$ small enough that $c(M_0,M_1)\,\epsilon^{-2}\abs{\log\epsilon}\,T^{1/8}\le \frac{1}{2}$ in addition to the above bounds \eqref{eq:Tcond1}, we obtain that $\Phi^T$ is a contraction on $\bm{B}_{0,1}$ and hence admits a unique fixed point.
\end{proof}

\subsection{Global well-posedness}\label{subsec:GWP}
Given the local well-posedness result of Lemma \ref{lem:LWP}, we may proceed to the proof of Theorem \eqref{thm:GWP} regarding global well-posedness for the evolution \eqref{eq:evolution}.
Given the form \eqref{eq:energy_ineq} of the energy inequality satisfied by the filament and following \cite{koiso1996motion,oelz2011curve,albritton2025rods}, it will be convenient to define the quantity 
\begin{equation}\label{eq:zee}
  \bm{Z} := \X_{sss}-\tau\X_s\,.
\end{equation}
We define the energy and dissipation quantities $E$ and $D$ as 
\begin{equation}\label{eq:EandD}
  E(t) := \frac{1}{2}\norm{\X_{ss}}_{L^2_s(\T)}^2\,, \qquad
  D(t) := \norm{\bm{Z}}_{\dot H^{1/2}_s(\T)}^2\,,
\end{equation}
so that, by \eqref{eq:energy_ineq},
\begin{equation}\label{eq:EandD_ineq}
  E(t) \le -c\,\abs{\log\epsilon} D(t)\,.
\end{equation}
We aim to extract control on $\norm{\X_{sss}}_{H^{1/2}_s}$, i.e. on the curve alone, in terms of $E$ and $D$. We show the following.
\begin{lemma}[Control on $\X_{sss}$]\label{lem:Xsss_control}
  A curve $\X$ satisfying \eqref{eq:evolution} may be controlled by the energy and dissipation quantities as follows:
  \begin{equation}\label{eq:Xsss_control}
    \norm{\X_{sss}}_{H^{1/2}_s(\T)} \le c\big( D(t)^{1/2}(1+E(t)^{1/2}) + E(t)^{1/2}+ E(t)^4\big)\,.
\end{equation} 
\end{lemma}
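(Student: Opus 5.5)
We split $\bm{Z}=\X_{sss}-\tau\X_s$ into its normal and tangential parts and use the inextensibility identities \eqref{eq:inex_ids}. Since $\X_s\cdot\X_{sss}=-\abs{\X_{ss}}^2$, we have
\begin{equation*}
  \bm{Z}\cdot\X_s = -\abs{\X_{ss}}^2-\tau\,, \qquad \P_{\X_s}^\perp\bm{Z} = \P_{\X_s}^\perp\X_{sss}\,,
\end{equation*}
and therefore
\begin{equation*}
  \X_{sss} = \bm{Z} + \tau\X_s = \bm{Z} - (\bm{Z}\cdot\X_s)\X_s - \abs{\X_{ss}}^2\X_s\,.
\end{equation*}
This identity removes the tension entirely: $\X_{sss}$ is written in terms of $\bm{Z}$ and a lower-order quadratic term in $\X_{ss}$. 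The plan is to estimate each piece in $\dot H^{1/2}$ and then add the low modes.

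The first two terms are controlled by the $H^{1/2}$ product estimate with an $H^1$ factor, as in Proposition \ref{prop:H_minus_halfbd}: $\norm{(\bm{Z}\cdot\X_s)\X_s}_{\dot H^{1/2}}\le c\norm{\bm{Z}}_{H^{1/2}}\norm{\X_s}_{H^1}^2$. For the mean of $\bm{Z}$, note $\int_\T\X_{sss}\,ds=0$, so $\int_\T\bm{Z}\,ds=-\int_\T\tau\X_s\,ds$. Subtracting the mean in the decomposition avoids that dependence. Concretely, we write $\bm{Z}=\bm{Z}_0+\tilde{\bm{Z}}$ with $\bm{Z}_0$ the mean, and observe that the constant $\bm{Z}_0$ contributes $\bm{Z}_0-(\bm{Z}_0\cdot\X_s)\X_s=\P^\perp_{\X_s}\bm{Z}_0$, whose $\dot H^{1/2}$ norm is at most $c\abs{\bm{Z}_0}\norm{\X_{ss}}_{L^2}$. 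This still requires a bound on $\abs{\bm{Z}_0}$.

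To handle $\bm{Z}_0$, we exploit that $\X_{sss}$ has zero mean. Integrating the identity gives
\begin{equation*}
  0=\bm{Z}_0-\int_\T(\bm{Z}\cdot\X_s)\X_s\,ds-\int_\T\abs{\X_{ss}}^2\X_s\,ds\,.
\end{equation*}
This forms a linear system ${\bf M}\bm{Z}_0=\text{(terms in }\tilde{\bm{Z}},\X)$, where ${\bf M}={\bf I}-\int_\T\X_s\otimes\X_s\,ds$. The matrix ${\bf M}$ is invertible with a quantitative bound because $\X_s$ spans directions on a closed curve: for a unit vector $\bm e$, $\bm e\cdot{\bf M}\bm e=\int_\T\abs{\P^\perp_{\X_s}\bm e}^2\,ds$. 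This quantity vanishes only if $\X_s=\pm\bm e$ everywhere, which is impossible for a closed curve. Quantifying this lower bound in terms of $E$ is the step I expect to be the main obstacle. I would try bounding $\int_\T(\X_s\cdot\bm e)\,ds=0$ together with $\abs{\X_s\cdot \bm e}\le1$ and the Lipschitz control $\norm{\X_{ss}}_{L^2}$. This forces $\int_\T(1-(\X_s\cdot\bm e)^2)\,ds\gtrsim \min(1, E^{-p})$ for some power $p$, since $\X_s\cdot\bm e$ must cross zero and can only move at rate $\abs{\X_{ss}}$. This produces the polynomial factor in $E$, plausibly the $E^4$ term after multiplying by the remaining quadratic terms. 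Then $\abs{\bm{Z}_0}\le c(1+E^{p})\big(\norm{\tilde{\bm{Z}}}_{L^2}+E\big)$, with $\norm{\tilde{\bm{Z}}}_{L^2}\le cD^{1/2}$.

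The quadratic term is estimated by interpolation exactly as in the proof of Lemma \ref{lem:TDP}:
\begin{equation*}
  \norm{\abs{\X_{ss}}^2\X_s}_{\dot H^{1/2}}\le c\norm{\X_{sss}}_{\dot H^{1/2}}^{2/3}\norm{\X_s}_{H^1}^{10/3}\,.
\end{equation*}
Young's inequality then absorbs $\tfrac12\norm{\X_{sss}}_{\dot H^{1/2}}$ into the left side, leaving a power of $E$. Finally, the $L^2$ part of $\norm{\X_{sss}}_{H^{1/2}}$ is controlled by the $\dot H^{1/2}$ part because $\X_{sss}$ has mean zero. Collecting terms gives $D^{1/2}(1+E^{1/2})$ from the $\bm{Z}$ pieces, $E^{1/2}$ from low order contributions, and $E^4$ from Young's inequality and the mean bound. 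The exact exponents would be fixed in the course of the computation.
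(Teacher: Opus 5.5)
Your skeleton is the paper's: $\P^\perp_{\X_s}\X_{sss}=\P^\perp_{\X_s}\bm{Z}$, the tangential part $-\abs{\X_{ss}}^2\X_s$ is handled by \eqref{eq:interps} and Young, and $\bm{Z}_0$ is recovered from $\int_\T\X_{sss}\,ds=0$.

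On the mean you are more careful than the paper. Since $\int_\T\X_{sss}\,ds=0$, one has $\bm{Z}_0=-\int_\T\tau\X_s\,ds$. The paper writes $+\int_\T\tau\X_s\,ds$, and that sign slip is what produces its trivially invertible matrix $\int_\T({\bf I}+\X_s\otimes\X_s)\,ds$. That relation never uses closedness, and it would force $\bm{Z}_0=0$ for $\X_s\equiv\bm{e}$, $\tau\equiv 1$, where actually $\bm{Z}_0=-\bm{e}$. The correct matrix is your ${\bf M}=\int_\T\P^\perp_{\X_s}\,ds$, and closedness enters exactly where you say.

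Your zero-crossing idea settles invertibility with $p=1$. If $g=\X_s\cdot\bm{e}$ vanishes at $s_0$, then $\abs{g(s)}\le\abs{s-s_0}^{1/2}\norm{\X_{ss}}_{L^2}\le\frac12$ for $\abs{s-s_0}\le(4\norm{\X_{ss}}_{L^2}^2)^{-1}$. With Fenchel ($\norm{\X_{ss}}_{L^2}^2\ge 4\pi^2$) this gives $\bm{e}\cdot{\bf M}\bm{e}\ge cE^{-1}$. This is sharp: a hairpin with turning radius $r$ has $E\sim r^{-1}$ and $\bm{e}\cdot{\bf M}\bm{e}\sim r$.

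The gap is in your last paragraph. Because $\abs{{\bf M}^{-1}}\sim E$, your bound reads $\abs{\bm{Z}_0}\le c(1+E)(\norm{\tilde{\bm{Z}}}_{L^2}+E)$. Then $\P^\perp_{\X_s}\bm{Z}_0$ contributes up to $\abs{\bm{Z}_0}E^{1/2}\sim E^{3/2}D^{1/2}$. That is not controlled by the right side of \eqref{eq:Xsss_control} with $c$ independent of $E$, so the claim that the $\bm{Z}$ pieces give $D^{1/2}(1+E^{1/2})$ does not follow. What you have does prove $\norm{\X_{sss}}_{H^{1/2}}\le C(E)(1+D^{1/2})$. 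That already suffices for the continuation argument behind Theorem \ref{thm:GWP}, since $E$ is nonincreasing and $\int D\,dt<\infty$ by \eqref{eq:energy_ineq}.

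To get the stated exponents, do not invert ${\bf M}$ crudely on the $\tilde{\bm{Z}}$ part. Integrating $\X_{sss}=\P^\perp_{\X_s}\bm{Z}-\abs{\X_{ss}}^2\X_s$ gives
\[
{\bf M}\bm{Z}_0=\int_\T\abs{\X_{ss}}^2\X_s\,ds-\int_\T\P^\perp_{\X_s}\tilde{\bm{Z}}\,ds .
\]
Let ${\bf M}\bm{Z}_0^{(1)}=-\int_\T\P^\perp_{\X_s}\tilde{\bm{Z}}\,ds$. Then
\begin{equation*}
\norm{\P^\perp_{\X_s}\bm{Z}_0^{(1)}}_{L^2}^2=\bm{Z}_0^{(1)}\cdot{\bf M}\bm{Z}_0^{(1)}=-\int_\T\P^\perp_{\X_s}\bm{Z}_0^{(1)}\cdot\tilde{\bm{Z}}\,ds\le\norm{\P^\perp_{\X_s}\bm{Z}_0^{(1)}}_{L^2}\norm{\tilde{\bm{Z}}}_{L^2}\,,
\end{equation*}
so $\norm{\P^\perp_{\X_s}\bm{Z}_0^{(1)}}_{L^2}\le D^{1/2}$ and $\abs{\bm{Z}_0^{(1)}}\le\lambda_{\min}({\bf M})^{-1/2}D^{1/2}\lesssim E^{1/2}D^{1/2}$. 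Interpolate between this $L^2$ bound and $\norm{\p_s\P^\perp_{\X_s}\bm{Z}_0^{(1)}}_{L^2}\le 2\abs{\bm{Z}_0^{(1)}}\norm{\X_{ss}}_{L^2}\lesssim ED^{1/2}$. This gives $\norm{\P^\perp_{\X_s}\bm{Z}_0^{(1)}}_{\dot H^{1/2}}\lesssim E^{1/2}D^{1/2}$, as required. The remaining piece ${\bf M}^{-1}\int_\T\abs{\X_{ss}}^2\X_s\,ds=O(E^2)$ contributes only $O(E^{5/2})$, which is absorbed into $E^4$ since $E\ge 2\pi^2$.
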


\begin{proof}
We begin by noting that the quantities $D$ and $E$ control the full $H^{1/2}_s$ norm of $\bm{Z}$ in the following way. Using \eqref{eq:inex_ids}, we may calculate that the mean of $\bm{Z}$ satisfies
\begin{equation}
\begin{aligned}
  \bm{Z}_0 &:= \int_\T \bm{Z}\,ds = \int_\T (\X_{sss}-\tau\X_s)\,ds = \int_\T \tau\X_s\,ds \\
  &= -\int_\T \big((\bm{Z}-\bm{Z}_0)\cdot\X_s + \bm{Z}_0\cdot\X_s+\abs{\X_{ss}}^2\big)\X_s\,ds\,.
\end{aligned}
\end{equation}
From this we obtain 
\begin{equation}
  \bigg(\int_\T({\bf I}+ \X_s\otimes\X_s)\,ds\bigg) \bm{Z}_0 = -\int_\T \big((\bm{Z}-\bm{Z}_0)\cdot\X_s +\abs{\X_{ss}}^2\big)\X_s\,ds\,.
\end{equation}
Using that the left hand side integrand is bounded below pointwise and that, on $\T$, we have
\begin{align*}
  \norm{\bm{Z}-\bm{Z}_0}_{L^2_s(\T)}^2 = \sum_{\abs{k}=1}^\infty\mc{F}[{\bm Z}]\overline{\mc{F}[{\bm Z}]} \le \sum_{\abs{k}=1}^\infty\abs{k}\mc{F}[{\bm Z}]\overline{\mc{F}[{\bm Z}]}\le \norm{\bm{Z}}_{\dot H^{1/2}_s(\T)}^2\,,
\end{align*}
we may then estimate
\begin{equation}
\begin{aligned}
  \abs{\bm{Z}_0} &\le c\big(\norm{\bm{Z}}_{\dot H^{1/2}_s} + \norm{\X_{ss}}_{L^2_s}^2 \big) 
  \le c(D(t)^{1/2}+E(t))\,.
\end{aligned}
\end{equation}
We thus have
\begin{equation}\label{eq:Z_Hhalf}
  \norm{\bm{Z}}_{H^{1/2}_s(\T)} \le c(D(t)^{1/2}+E(t))\,.
\end{equation}

We now proceed to estimate $\norm{\X_{sss}}_{H^{1/2}_s}$. First note that we may write the normal components of $\X_{sss}$ as
\begin{equation}
  \P_{\X_s}^\perp \X_{sss} = \P_{\X_s}^\perp \bm{Z} = \bm{Z} - \P_{\X_s}\bm{Z}\,.
\end{equation}
Using \eqref{eq:Z_Hhalf}, we may estimate
\begin{equation}
  \norm{\P_{\X_s}\bm{Z}}_{\dot H^{1/2}_s} \le \norm{\bm{Z}}_{H^{1/2}_s}\norm{\X_s\otimes\X_s}_{H^1_s}
  \le c\,(D(t)^{1/2}+E(t))(1+E(t)^{1/2}) \,,
\end{equation}
so that
\begin{equation}\label{eq:normal_est}
  \norm{\P_{\X_s}^\perp \X_{sss}}_{\dot H^{1/2}_s} \le c\,(D(t)^{1/2}+E(t))(1+E(t)^{1/2})\,.
\end{equation}
Next, using \eqref{eq:inex_ids}, we may write the tangential component of $\X_{sss}$ as 
\begin{equation}\label{eq:tang1}
  \P_{\X_s}\X_{sss} = (\X_s\cdot\X_{sss})\X_s =-\abs{\X_{ss}}^2\X_s\,.
\end{equation}
To estimate this in $\dot H^{1/2}_s$, we will make use of the interpolation inequalities \eqref{eq:interps} on $\T$ to obtain
\begin{equation}
\begin{aligned}
  \norm{\abs{\X_{ss}}^2\X_s}_{\dot H^{1/2}_s} &\le c\norm{(\abs{\X_{ss}}^2\X_s)_s}_{L^2_s}^{1/2}\norm{\abs{\X_{ss}}^2\X_s}_{L^2_s}^{1/2}\\
  &\le c\norm{(\abs{\X_{ss}}^2\X_s)_s}_{L^2_s}^{1/2}\norm{\X_{ss}}_{L^2_s}^{5/6}\norm{\X_{sss}}_{\dot H^{1/2}_s}^{1/6}\,.
\end{aligned}
\end{equation}
Expanding 
\begin{equation}
  (\abs{\X_{ss}}^2\X_s)_s = 2(\X_{ss}\cdot\X_{sss})\X_s+ \abs{\X_{ss}}^2\X_{ss}\,,
\end{equation}
we may further estimate 
\begin{equation}
  \begin{aligned}
  \norm{(\abs{\X_{ss}}^2\X_s)_s}_{L^2_s} &\le 2\norm{\X_{ss}}_{L^\infty_s}\norm{\X_{sss}}_{L^2_s} + \norm{\X_{ss}}_{L^\infty_s}^2\norm{\X_{ss}}_{L^2_s}\\
  &\le \norm{\X_{sss}}_{\dot H^{1/2}_s}\norm{\X_{ss}}_{L^2_s}
  + \norm{\X_{sss}}_{\dot H^{1/2}_s}^{2/3}\norm{\X_{ss}}_{L^2_s}^{7/3}\,.
\end{aligned}
\end{equation}
In total, the right hand side of \eqref{eq:tang1} satisfies 
\begin{equation}\label{eq:tang_est}
  \norm{\abs{\X_{ss}}^2\X_s}_{\dot H^{1/2}_s} 
  \le c\big(\norm{\X_{sss}}_{\dot H^{1/2}_s}^{2/3}\norm{\X_{ss}}_{L^2_s}^{4/3}
  + \norm{\X_{sss}}_{\dot H^{1/2}_s}^{1/2}\norm{\X_{ss}}_{L^2_s}^2\big)\,.
\end{equation}
Using \eqref{eq:tang_est} and \eqref{eq:normal_est}, we may thus bound
\begin{equation}
\begin{aligned}
  &\norm{\X_{sss}}_{\dot H^{1/2}_s} \le \norm{\P_{\X_s}^\perp\X_{sss}}_{\dot H^{1/2}_s} + \norm{\P_{\X_s}\X_{sss}}_{\dot H^{1/2}_s}\\
  &\qquad\le c(D(t)^{1/2}+E(t))(1+E(t)^{1/2}) + c\big(\norm{\X_{sss}}_{\dot H^{1/2}_s}^{2/3}E(t)^{4/3}
  + \norm{\X_{sss}}_{\dot H^{1/2}_s}^{1/2}E(t)^2\big)\\
  &\qquad \le c\big( D(t)^{1/2}(1+E(t)^{1/2}) + E(t)+ E(t)^4\big)\,,
\end{aligned}
\end{equation}
where we have used Young's inequality to split the quantities in the second line. Using \eqref{eq:interps}, we may further estimate
\begin{equation}
    \norm{\X_{sss}}_{L^2_s} \le c(\norm{\X_{sss}}_{\dot H^{1/2}_s}+\norm{\X_{ss}}_{L^2_s})
    \le c\big( D(t)^{1/2}(1+E(t)^{1/2}) + E(t)^{1/2}+ E(t)^4\big)\,,
\end{equation}  
from which we obtain \eqref{eq:Xsss_control}.
\end{proof}

Given Lemma \ref{lem:Xsss_control} and the continuation criterion \eqref{eq:continuation}, the local solution of Lemma \ref{lem:LWP} may be extended globally in time to yield Theorem \ref{thm:GWP}.

\section{Convergence to resistive force theory dynamics}\label{sec:RFTconverge}
We may next turn to the limiting dynamics of \eqref{eq:evolution} as $\epsilon\to 0$.

\subsection{Setup}
Given an initial curve\footnote{Here $\X^{\rm in}\in H^{5/2}(\T)$ should actually be sufficient regularity for our arguments, but in order to use the results of \cite[Theorem 1.1]{albritton2025rods} as a black box, we will take $\X^{\rm in}\in H^4(\T)$.} $\X^{\rm in}\in H^4(\T)$, let $\Y\in C([0,T];H^4_s(\T))\cap L^2_{t,{\rm loc}}H^6_s$ be the resulting unique global solution to the resistive force theory evolution \eqref{eq:RFT_FBP}, guaranteed by \cite[Theorem~1.1]{albritton2025rods}. In particular, $\Y$ satisfies  
\begin{equation}\label{eq:RFT_Yeqn}
\begin{aligned}
   \frac{\p\Y}{\p t} &= -\frac{\abs{\log\epsilon}}{4\pi}({\bf I}+\Y_s\otimes\Y_s)\big(\Y_{sss}-\tau_Y\Y_s \big)_s\\
   &=-\frac{\abs{\log\epsilon}}{2\pi}\P_{\Y_s}\big(\Y_{sss}-\tau_Y\Y_s \big)_s -\frac{\abs{\log\epsilon}}{4\pi}\P_{\Y_s}^\perp\big(\Y_{sss}-\tau_Y\Y_s \big)_s \,,\\
   \abs{\Y_s}&=1\,,\qquad \Y\big|_{t=0}=\X^{\rm in}\,.
\end{aligned}
 \end{equation}

Letting $\X(s,t)\in C([0,T];H^2_s(\T))\cap L^2_{t,{\rm loc}}H^{7/2}_s$ denote the global solution to \eqref{eq:evolution} starting from $\X^{\rm in}$, we consider the difference $\bm{W}= \X-\Y$, whose evolution then satisfies 
\begin{equation}\label{eq:Wevolution1}
\begin{aligned}
  \frac{\p\bm{W}}{\p t} = -\overline{\mc{L}}_\epsilon(\X)[(\X_{sss}-\tau_X\X_s)_s]+ \frac{\abs{\log\epsilon}}{4\pi}({\bf I}+\Y_s\otimes\Y_s)(\Y_{sss}-\tau_Y\Y_s)_s\,, \quad \bm{W}\big|_{t=0}=0\,.
\end{aligned}
\end{equation}
Now, noting that both the map $\overline{\mc{L}}_\epsilon(\X)$ and the resistive force theory operator $\mc{L}_{\epsilon,\rm RFT}(\Y)=\frac{\abs{\log\epsilon}}{4\pi}({\bf I}+\Y_s\otimes\Y_s)$ blow up logarithmically as $\epsilon\to 0$, to obtain a convergence result for their dynamics, we consider \eqref{eq:Wevolution1} under the following rescaling of time:
\begin{equation}\label{eq:t_rescale}
  \underline{t} = \abs{\log\epsilon} t\,, \quad \p_{\underline t} = \abs{\log\epsilon}^{-1} \p_t\,.
\end{equation}
The left hand side of the evolution \eqref{eq:Wevolution1} may then be written
\begin{equation}
  \frac{\p\bm{W}}{\p t} = \abs{\log\epsilon}\frac{\p\bm{W}}{\p \underline t}\,.
\end{equation}
We may split the right hand side of \eqref{eq:Wevolution1} as
\begin{equation}
\begin{aligned}
  -\P_{\X_s}T_{m_\epsilon^{\rm t}}\P_{\X_s}&\big(\X_{sss}-\tau_X\X_s \big)_s  + \P_{\Y_s}\frac{\abs{\log\epsilon}}{2\pi}\P_{\Y_s}\big(\Y_{sss}-\tau_Y\Y_s \big)_s \\
   - \P_{\X_s}^\perp T_{m_\epsilon^{\rm n}}&\P_{\X_s}^\perp\big(\X_{sss}-\tau_X\X_s \big)_s+ \P_{\Y_s}^\perp\frac{\abs{\log\epsilon}}{4\pi}\P_{\Y_s}^\perp\big(\Y_{sss}-\tau_Y\Y_s \big)_s\\
  &= -\P_{\X_s}T_{m_\epsilon^{\rm t}}\P_{\X_s}\big(\bm{W}_{sss}-(\tau_X\X_s-\tau_Y\Y_s) \big)_s  \\
  &\qquad - \bigg(\P_{\X_s}T_{m_\epsilon^{\rm t}}\P_{\X_s}-\P_{\Y_s}\frac{\abs{\log\epsilon}}{2\pi}\P_{\Y_s}\bigg)\big(\Y_{sss}-\tau_Y\Y_s \big)_s\\
  &\quad - \P_{\X_s}^\perp T_{m_\epsilon^{\rm n}}\P_{\X_s}^\perp\big(\bm{W}_{sss}-(\tau_X\X_s-\tau_Y\Y_s) \big)_s\\
  &\qquad - \bigg(\P_{\X_s}^\perp T_{m_\epsilon^{\rm n}}\P_{\X_s}^\perp-\P_{\Y_s}^\perp\frac{\abs{\log\epsilon}}{4\pi}\P_{\Y_s}^\perp\bigg)\big(\Y_{sss}-\tau_Y\Y_s \big)_s\,,
\end{aligned}
\end{equation}
so that, upon multiplying \eqref{eq:Wevolution1} by $\bm{W}_{ssss}$ and integrating over $\T$, we obtain the energy identity 
\begin{equation}\label{eq:Wenergy1}
\begin{aligned}
  \abs{\log\epsilon}&\frac{\p}{\p \underline{t}}\int_\T\abs{\bm{W}_{ss}}^2\,ds +\int_\T (\P_{\X_s}\bm{W}_{ssss})\cdot T_{m_\epsilon^{\rm t}}(\P_{\X_s}\bm{W}_{ssss})\,ds \\
  &\qquad\quad + \int_\T (\P_{\X_s}^\perp\bm{W}_{ssss})\cdot T_{m_\epsilon^{\rm n}}(\P_{\X_s}^\perp\bm{W}_{ssss})\,ds = \bm{A}^\parallel +\bm{A}^\perp \,, \\
  & \bm{A}^\parallel = \int_\T(\P_{\X_s}\bm{W}_{ssss})\cdot T_{m_\epsilon^{\rm t}}\P_{\X_s}\big(\tau_W\X_s+\tau_Y\bm{W}_s \big)_s \,ds \\
  &\qquad\quad - \int_\T \bm{W}_{ssss}\cdot\bigg(\P_{\X_s}T_{m_\epsilon^{\rm t}}\P_{\X_s}-\P_{\Y_s}\frac{\abs{\log\epsilon}}{2\pi}\P_{\Y_s}\bigg)\big(\Y_{sss}-\tau_Y\Y_s \big)_s\,ds\\
  & \bm{A}^\perp = \int_\T (\P_{\X_s}^\perp\bm{W}_{ssss})\cdot T_{m_\epsilon^{\rm n}}\P_{\X_s}^\perp\big(\tau_W\X_s+\tau_Y\bm{W}_s \big)_s\,ds \\
  &\qquad\quad - \int_\T\bm{W}_{ssss}\cdot\bigg(\P_{\X_s}^\perp T_{m_\epsilon^{\rm n}}\P_{\X_s}^\perp-\P_{\Y_s}^\perp\frac{\abs{\log\epsilon}}{4\pi}\P_{\Y_s}^\perp\bigg)\big(\Y_{sss}-\tau_Y\Y_s \big)_s\,ds\,.
\end{aligned}
\end{equation}

Our goal will be to bound $\bm{A}^\parallel$ and $\bm{A}^\perp$ in terms of the energy and dissipation quantities appearing on the left hand side of \eqref{eq:Wenergy1}. Since convergence is expected only at a very slow logarithmic rate as $\epsilon\to 0$ (see Theorem \ref{thm:RFTconv} and introductory discussion), these estimates are a bit delicate. In particular, we see from the local well-posedness theory that because of the difference in $\epsilon$-scaling for $\overline{\mc{L}}_\epsilon$ at high versus low wavenumbers, the `coarse' upper and lower bounds differ by a factor of $\epsilon^{-1}$. This will be far too lossy for convergence, so whenever possible, we will aim to avoid actually evaluating the operators $T_{m_\epsilon^{\rm t}}$ and $T_{m_\epsilon^{\rm n}}$. Instead , we will rely on the fact that both $T_{m_\epsilon^{\rm t}}$ and $T_{m_\epsilon^{\rm n}}$ are positive operators, and, in particular, we may apply $T_{m_\epsilon^{\rm t}}^{1/2}$ and $T_{m_\epsilon^{\rm n}}^{1/2}$. Furthermore, since $m_\epsilon^{\rm n}(k)\lesssim m_\epsilon^{\rm t}(k)\lesssim m_\epsilon^{\rm n}(k)$ independent of $\epsilon$, by Lemma \ref{lem:multiplierbds}, we may interchange $T_{m_\epsilon^{\rm t}}$ and $T_{m_\epsilon^{\rm n}}$ in upper and lower bounds as needed.

To proceed, we will need a few additional ingredients. The first is a bound for powers of the inverse operators $T_{m_\epsilon^{\rm t}}^{-1}=T_{1/m_\epsilon^{\rm t}}$, $T_{m_\epsilon^{\rm n}}^{-1}=T_{1/m_\epsilon^{\rm n}}$.
\begin{lemma}[Inverse multiplier operators]\label{lem:Tinverse}
  For $\theta\ge 0$, the inverse operators 
  \begin{equation}
    T_{m_\epsilon^{\rm t}}^{-\theta}(\cdot) = \mc{F}^{-1}\big[m_\epsilon^{\rm t}(k)^{-\theta}\mc{F}[\cdot]\big]\,, \quad
    T_{m_\epsilon^{\rm n}}^{-\theta}(\cdot) = \mc{F}^{-1}\big[m_\epsilon^{\rm n}(k)^{-\theta}\mc{F}[\cdot]\big]
  \end{equation}
satisfy, for $j={\rm t},{\rm n}$,
  \begin{equation}
    \norm{T_{m_\epsilon^j}^{-\theta}\bm{f}}_{H^\ell} \le c\abs{\log\epsilon}^{-\theta}\norm{\bm{f}}_{H^{\ell+\theta}}\,.
  \end{equation}
\end{lemma}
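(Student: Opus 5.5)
The proof is a Parseval computation that reduces the statement to a pointwise multiplier bound. For $j={\rm t},{\rm n}$ we have
\begin{equation*}
\norm{T_{m_\epsilon^j}^{-\theta}\bm{f}}_{H^\ell}^2=\sum_{k}(1+\abs{k}^2)^{\ell}\, m_\epsilon^j(k)^{-2\theta}\abs{\mc{F}[\bm{f}](k)}^2 .
\end{equation*}
It therefore suffices to show that, uniformly in $k\in\Z$ and for $\epsilon$ small (say $\epsilon<1/2$, so that $\abs{\log\epsilon}$ is bounded below),
\begin{equation*}
m_\epsilon^j(k)^{-\theta}\le c\abs{\log\epsilon}^{-\theta}(1+\abs{k}^2)^{\theta/2}.
\end{equation*}
Here $c$ may depend on $\theta$, since it comes from raising the constants in Lemma \ref{lem:multiplierbds} to the power $\theta$. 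Once this holds, multiplying by $\abs{\mc{F}[\bm{f}]}^2(1+\abs{k}^2)^\ell$ and summing gives the claim with $H^{\ell+\theta}$ on the right.

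I will split into the same two wavenumber regimes as in Lemma \ref{lem:multiplierbds}.

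\emph{Low wavenumbers.} For $k=0$, \eqref{eq:multN0} gives $m_\epsilon^j(0)^{-1}=c\abs{\log\epsilon}^{-1}$ exactly. For $0<\abs{k}<\frac{1}{2\pi\epsilon}$, the second line of Lemma \ref{lem:multiplierbds} gives $m_\epsilon^j(k)^{-1}\le c\abs{\log\epsilon}^{-1}$. Raising to the power $\theta$ and using $(1+\abs{k}^2)^{\theta/2}\ge1$ finishes this regime.

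\emph{High wavenumbers.} For $\abs{k}\ge\frac{1}{2\pi\epsilon}$, Lemma \ref{lem:multiplierbds} (or Lemma \ref{lem:highk_refinement}) gives $m_\epsilon^j(k)^{-1}\le c\,\epsilon\abs{k}$. The only point needing care is converting the factor $\epsilon$ into $\abs{\log\epsilon}^{-1}$. Since $\epsilon\abs{\log\epsilon}$ is bounded on $(0,1)$, we have $\epsilon\le c\abs{\log\epsilon}^{-1}$. Hence $m_\epsilon^j(k)^{-1}\le c\abs{\log\epsilon}^{-1}\abs{k}$, and raising to the power $\theta$ gives the pointwise bound.

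Nothing here is a real obstacle. The only subtlety is the trade of $\epsilon$ for $\abs{\log\epsilon}^{-1}$ at high wavenumbers, which is lossy but harmless for the stated inequality. Finally, the vector-valued case follows componentwise, since $T_m$ acts on each component.
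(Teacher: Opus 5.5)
Your proof is correct and is exactly the argument the paper intends. The paper says only that the lemma follows directly from the multiplier bounds of Lemma \ref{lem:multiplierbds}, and your Parseval reduction to the pointwise bound $m_\epsilon^j(k)^{-\theta}\le c\abs{\log\epsilon}^{-\theta}(1+\abs{k}^2)^{\theta/2}$, with $\epsilon\le c\abs{\log\epsilon}^{-1}$ absorbing the high-wavenumber regime, supplies precisely those details.

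One caveat applies equally to the paper. The low-wavenumber inverse bound of Lemma \ref{lem:multiplierbds} cannot literally hold all the way up to $\abs{k}\approx\frac{1}{2\pi\epsilon}$, because for $k\neq 0$ the multipliers depend only on $2\pi\epsilon\abs{k}$ and so are $\epsilon$-independent $O(1)$ numbers there. Your pointwise bound still holds if you split at $\abs{k}=\epsilon^{-1/2}$ instead. Below the split, the expansion in the proof of Lemma \ref{lem:lowk_refine} gives $m_\epsilon^j(k)\ge c\abs{\log\epsilon}$. Above it, Lemma \ref{lem:highk_refinement} gives $m_\epsilon^j(k)^{-1}\le c(\epsilon+\epsilon^{1/2})\abs{k}\le c\abs{\log\epsilon}^{-1}\abs{k}$.
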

The proof follows directly from the multiplier bounds of Lemma \ref{lem:multiplierbds}.
We will use Lemma \ref{lem:Tinverse} along the way in estimating $\bm{A}^\parallel$ and $\bm{A}^\perp$.

\subsection{Operator difference}\label{subsec:OpDiff}
We will require bounds for the terms in $\bm{A}^\parallel$ and $\bm{A}^\perp$ involving the difference between the multiplier operators $T_{m_\epsilon^{\rm t}}$, $T_{m_\epsilon^{\rm n}}$ and their resistive force theory counterparts, the constants $\frac{\abs{\log\epsilon}}{2\pi}$, $\frac{\abs{\log\epsilon}}{4\pi}$. Here we exploit Lemma \ref{lem:lowk_refine}: the difference between the multipliers and the resistive force theory constants is bounded independent of $\epsilon$ at low wavenumber. 
\begin{lemma}[Multiplier operator and RFT difference]\label{lem:mult_RFT_diff}
Given $\bm{\varphi}\in H^{-1/2}(\T)$ and $\bm{f}\in L^2(\T)$, we may estimate 
 \begin{equation}
  \begin{aligned}
  \abs{\int_\T\bm{\varphi}\cdot\bigg(T_{m_\epsilon^{\rm t}}-\frac{\abs{\log\epsilon}}{2\pi}\bigg)[\bm{f}]\,ds} 
  &\le c\norm{T_{m_\epsilon^{\rm t}}^{1/2}\bm{\varphi}}_{L^2}\norm{\bm{f}}_{L^2}\\
   \abs{\int_\T\bm{\varphi}\cdot\bigg(T_{m_\epsilon^{\rm n}}-\frac{\abs{\log\epsilon}}{4\pi}\bigg)[\bm{f}]\,ds} 
   &\le c\norm{T_{m_\epsilon^{\rm n}}^{1/2}\bm{\varphi}}_{L^2}\norm{\bm{f}}_{L^2}\,.
\end{aligned}
\end{equation}
\end{lemma}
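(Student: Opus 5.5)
The plan is to work entirely on the Fourier side via Parseval, bounding the symbol $m_\epsilon^j(k)-\frac{\abs{\log\epsilon}}{2\pi}$ (resp.\ $\frac{\abs{\log\epsilon}}{4\pi}$) by a multiple of $m_\epsilon^j(k)^{1/2}$, uniformly in $k$ and $\epsilon$. Concretely, for $j={\rm t}$ I would write
\begin{equation*}
\int_\T\bm{\varphi}\cdot\Big(T_{m_\epsilon^{\rm t}}-\frac{\abs{\log\epsilon}}{2\pi}\Big)[\bm{f}]\,ds = \sum_{k}\Big(m_\epsilon^{\rm t}(k)-\frac{\abs{\log\epsilon}}{2\pi}\Big)\mc{F}[\bm{\varphi}](k)\cdot\overline{\mc{F}[\bm{f}](k)}\,,
\end{equation*}
and factor the symbol as $m_\epsilon^{\rm t}(k)^{1/2}\cdot d_\epsilon(k)$ with $d_\epsilon(k):=m_\epsilon^{\rm t}(k)^{-1/2}\big(m_\epsilon^{\rm t}(k)-\frac{\abs{\log\epsilon}}{2\pi}\big)$. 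Cauchy--Schwarz then gives the bound $\sup_k\abs{d_\epsilon(k)}\,\norm{T_{m_\epsilon^{\rm t}}^{1/2}\bm{\varphi}}_{L^2}\norm{\bm{f}}_{L^2}$, so it suffices to show $\sup_k\abs{d_\epsilon(k)}\le c$ independent of $\epsilon$ (for $\epsilon$ small). The normal case is identical with $m_\epsilon^{\rm n}$ and $\frac{\abs{\log\epsilon}}{4\pi}$.

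To bound $d_\epsilon$, I would split into three wavenumber ranges. For $k=0$ the difference vanishes by \eqref{eq:multN0}. For $0<\abs{k}<\frac{1}{2\pi\epsilon}$, Lemma \ref{lem:lowk_refine} gives $\abs{m_\epsilon^{\rm t}(k)-\frac{\abs{\log\epsilon}}{2\pi}}\le c(1+\log\abs{k})$; here one should really use the two-sided version implicit in the Taylor expansion in that proof, namely $m_\epsilon^{\rm t}(k)=\frac{\abs{\log\epsilon}}{2\pi}-\frac{\log\abs{k}}{2\pi}+O(1)$, which also shows $m_\epsilon^{\rm t}(k)\ge c(\abs{\log\epsilon}-\log\abs{k})+O(1)=c\abs{\log(\epsilon\abs{k})}+O(1)$. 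Then $\abs{d_\epsilon(k)}\lesssim (1+\log\abs{k})/\abs{\log(\epsilon\abs{k})}^{1/2}$. This is not uniformly bounded as $\epsilon\abs{k}\to1$ unless handled carefully, so I would further split at, say, $\abs{k}\le\epsilon^{-1/2}$: there $\abs{\log(\epsilon\abs{k})}\ge\frac12\abs{\log\epsilon}\ge\log\abs{k}$, so $\abs{d_\epsilon}\lesssim(1+\log\abs{k})^{1/2}$ — still growing. This signals the main obstacle: a pure $L^2$ Cauchy--Schwarz with $\norm{\bm f}_{L^2}$ needs $\abs{m-\frac{\abs{\log\epsilon}}{2\pi}}\lesssim m^{1/2}$, and at intermediate $k$ the difference is of size $\log\abs{k}$ while $m^{1/2}\sim\abs{\log\epsilon}^{1/2}$. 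Since $\log\abs{k}\le\abs{\log\epsilon}$ in this range, one has $\log\abs{k}\lesssim \abs{\log\epsilon}^{1/2}(\log\abs k)^{1/2}$, which is not uniformly $\lesssim m^{1/2}$. I would therefore first try to exploit that $m^{1/2}\ge c\abs{\log(\epsilon k)}^{1/2}$ combined with $\log\abs{k}\lesssim \abs{\log\epsilon}$, and if that fails, allow a mild loss by absorbing $(1+\log\abs{k})^{1/2}$ into $\bm{f}$ (using that $\bm f$ will in applications carry extra regularity), i.e.\ aim for $\norm{\bm f}_{H^{\delta}}$; but the intended statement suggests the expansion in fact gives the bound with constant $c$ after using $m\ge c\abs{\log\epsilon}$ from Lemma \ref{lem:multiplierbds} and bounding $\log\abs k\lesssim\abs{\log\epsilon}^{1/2}$ on the relevant support.

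For $\abs{k}\ge\frac{1}{2\pi\epsilon}$, Lemma \ref{lem:multiplierbds} gives $m_\epsilon^{\rm t}(k)\le c$ bounded (since $\epsilon^{-1}\abs k^{-1}\le 2\pi$) and, by Lemma \ref{lem:highk_refinement}, $m_\epsilon^{\rm t}(k)\ge c(\epsilon\abs{k}+1)^{-1}$, so the difference is dominated by $\frac{\abs{\log\epsilon}}{2\pi}$ and $d_\epsilon(k)\approx\abs{\log\epsilon}(\epsilon\abs k)^{1/2}$, which is unbounded. Hence in this high range the constant RFT term cannot be controlled by $T^{1/2}_{m}\bm\varphi$ alone; I would handle it by separately treating $\frac{\abs{\log\epsilon}}{2\pi}\sum_{\abs k\ge 1/(2\pi\epsilon)}\mc F[\bm\varphi]\overline{\mc F[\bm f]}$ and checking whether the hypothesis $\bm\varphi\in H^{-1/2}$ together with how the lemma is applied (e.g.\ $\bm\varphi=\bm W_{ssss}$ paired with smooth $\bm f$) permits it. I expect this high-wavenumber range, and the intermediate logarithmic range above, to be the genuine difficulty; the Parseval/Cauchy--Schwarz structure is routine.
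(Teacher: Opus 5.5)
Your Parseval/Cauchy--Schwarz reduction to $\sup_k\abs{d_\epsilon(k)}$ is not merely sufficient but sharp. For $\bm{\varphi}=\bm{f}=\cos(2\pi ks)\bm{e}_1$ one gets
\begin{equation*}
\frac{\abs{\int_\T\bm{\varphi}\cdot\big(T_{m_\epsilon^{\rm t}}-\frac{\abs{\log\epsilon}}{2\pi}\big)[\bm{f}]\,ds}}{\norm{T_{m_\epsilon^{\rm t}}^{1/2}\bm{\varphi}}_{L^2}\norm{\bm{f}}_{L^2}}
=\frac{\abs{m_\epsilon^{\rm t}(k)-\frac{\abs{\log\epsilon}}{2\pi}}}{m_\epsilon^{\rm t}(k)^{1/2}}\,.
\end{equation*}
So the two obstructions you found are counterexamples to the lemma as stated, not defects of your method, and the proposal has a genuine gap that no argument can fill. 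For fixed $\epsilon<1$ this ratio tends to $+\infty$ as $\abs{k}\to\infty$, since $m_\epsilon^{\rm t}(k)\to 0$. Uniformly in $\epsilon$, it is of order $\abs{\log\epsilon}^{1/2}$ at $\abs{k}\approx\epsilon^{-1/2}$. It is of order $\abs{\log\epsilon}$ at $\abs{k}\approx(2\pi\epsilon)^{-1}$, where your two-sided expansion $m_\epsilon^{\rm t}(k)=\frac{\abs{\log\epsilon}-\log\abs{k}}{2\pi}+O(1)$ makes $m_\epsilon^{\rm t}=O(1)$. In fact, for $\bm{\varphi}\in H^{-1/2}$ and $\bm{f}\in L^2$ the term $\frac{\abs{\log\epsilon}}{2\pi}\int_\T\bm{\varphi}\cdot\bm{f}\,ds$ is not even defined.

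Hence the closing moves you float cannot succeed. On $\abs{k}<(2\pi\epsilon)^{-1}$, $\log\abs{k}$ runs up to $\abs{\log\epsilon}$, so there is no ``relevant support'' on which $\log\abs{k}\lesssim\abs{\log\epsilon}^{1/2}$. Even granting $m_\epsilon^{\rm t}\ge c\abs{\log\epsilon}$ on the whole low range (false near its top, by your expansion), the ratio would still be of order $\abs{\log\epsilon}^{1/2}$ there. Likewise, the high-range constant term cannot be controlled with only $\bm{\varphi}\in H^{-1/2}$ and $\bm{f}\in L^2$.

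The paper's proof runs exactly your Parseval split, but it asserts the two bounds you found to fail. At low wavenumbers it uses $(1+\log\abs{k})\lesssim m_\epsilon^{\rm t}(k)^{1/2}$. At high wavenumbers it uses $\abs{m_\epsilon^{\rm t}(k)-\frac{\abs{\log\epsilon}}{2\pi}}\lesssim(\epsilon\abs{k})^{-1/2}m_\epsilon^{\rm t}(k)^{1/2}\approx m_\epsilon^{\rm t}(k)$, which silently drops the $\frac{\abs{\log\epsilon}}{2\pi}$ term. You have therefore located a real error in the paper's argument.

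The paper's remark about ``trading the extra regularity of $\bm{f}$'' points to the true statement, with $\norm{\bm{f}}_{H^{1/2}}$ in place of $\norm{\bm{f}}_{L^2}$, and your splitting proves it:
\begin{itemize}
\item For $1\le\abs{k}<(2\pi\epsilon)^{-1}$, the two-sided form of Lemma \ref{lem:lowk_refine} gives $\abs{m_\epsilon^{\rm t}-\frac{\abs{\log\epsilon}}{2\pi}}\le c(1+\log\abs{k})\le c\abs{k}^{1/2}m_\epsilon^{\rm t}(k)^{1/2}$. This uses $m_\epsilon^{\rm t}\ge c>0$ there, which follows from Lemma \ref{lem:highk_refinement}.
\item For $\abs{k}\ge(2\pi\epsilon)^{-1}$, combine $\abs{m_\epsilon^{\rm t}-\frac{\abs{\log\epsilon}}{2\pi}}\le c\abs{\log\epsilon}$ with $m_\epsilon^{\rm t}(k)\ge c(\epsilon\abs{k})^{-1}$. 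This gives $\abs{m_\epsilon^{\rm t}-\frac{\abs{\log\epsilon}}{2\pi}}\le c\,\epsilon^{1/2}\abs{\log\epsilon}\,\abs{k}^{1/2}m_\epsilon^{\rm t}(k)^{1/2}\le c\abs{k}^{1/2}m_\epsilon^{\rm t}(k)^{1/2}$.
\end{itemize}
Cauchy--Schwarz then bounds the pairing by $c\norm{T_{m_\epsilon^{\rm t}}^{1/2}\bm{\varphi}}_{L^2}\norm{\bm{f}}_{H^{1/2}}$, and identically for $m_\epsilon^{\rm n}$ with $\frac{\abs{\log\epsilon}}{4\pi}$.

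Your fallback of $\norm{\bm{f}}_{H^\delta}$ with small $\delta$ would repair only the low range. At high wavenumbers the ratio grows like $\abs{\log\epsilon}(\epsilon\abs{k})^{1/2}$, so the full half derivative is needed. That corrected inequality, not the $L^2$ one, is what you should aim to prove.
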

As a corollary, given a bit more regularity on $\bm{f}$, we may obtain the following bound: 
\begin{corollary}\label{cor:mult_RFT_diff}
  Given $\bm{\varphi}\in H^{-1/2}(\T)$ and $\bm{f}\in H^{1/2}(\T)$, we have 
 \begin{equation}
  \begin{aligned}
  &\abs{\int_\T\bm{\varphi}\cdot\bigg(\P_{\X_s} T_{m_\epsilon^{\rm t}}\P_{\X_s} -\P_{\Y_s}\frac{\abs{\log\epsilon}}{2\pi}\P_{\Y_s}\bigg)[\bm{f}]\,ds} 
  \le c\norm{T_{m_\epsilon^{\rm t}}^{1/2}(\P_{\X_s}\bm{\varphi})}_{L^2}\norm{\P_{\X_s}\bm{f}}_{L^2}\\
  &\qquad\qquad\qquad\quad + c\abs{\log\epsilon}^{1/2}\norm{\bm{W}_s}_{H^1}(\norm{\X_s}_{H^1}^2+\norm{\Y_s}_{H^1}^2)\norm{T_{m_\epsilon^{\rm t}}^{1/2}\bm{\varphi}}_{L^2}\norm{\bm{f}}_{H^{1/2}}\\
   &\abs{\int_\T\bm{\varphi}\cdot\bigg(\P_{\X_s}^\perp T_{m_\epsilon^{\rm n}}\P_{\X_s}^\perp-\P_{\Y_s}^\perp\frac{\abs{\log\epsilon}}{4\pi}\P_{\Y_s}^\perp\bigg)[\bm{f}]\,ds} 
   \le c\norm{T_{m_\epsilon^{\rm n}}^{1/2}(\P_{\X_s}^\perp\bm{\varphi})}_{L^2}\norm{\P_{\X_s}^\perp\bm{f}}_{L^2}\\
  &\qquad\qquad\qquad\quad + c\abs{\log\epsilon}^{1/2}\norm{\bm{W}_s}_{H^1}(1+\norm{\X_s}_{H^1}^2+\norm{\Y_s}_{H^1}^2)\norm{T_{m_\epsilon^{\rm n}}^{1/2}\bm{\varphi}}_{L^2}\norm{\bm{f}}_{H^{1/2}}\,.
\end{aligned}
\end{equation}
\end{corollary}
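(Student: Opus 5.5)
We prove the tangential estimate; the normal one is identical with $\P^\perp$, $m_\epsilon^{\rm n}$ and $\frac{\abs{\log\epsilon}}{4\pi}$ in place of $\P$, $m_\epsilon^{\rm t}$ and $\frac{\abs{\log\epsilon}}{2\pi}$. The constant $1$ in the normal bound appears because $\P^\perp={\bf I}-\P$ contains the identity. The plan is to split the operator difference into a pure multiplier difference along a fixed direction plus a projection difference:
\begin{equation*}
\P_{\X_s} T_{m_\epsilon^{\rm t}}\P_{\X_s} -\P_{\Y_s}\tfrac{\abs{\log\epsilon}}{2\pi}\P_{\Y_s}
= \P_{\X_s}\Big(T_{m_\epsilon^{\rm t}}-\tfrac{\abs{\log\epsilon}}{2\pi}\Big)\P_{\X_s} + \tfrac{\abs{\log\epsilon}}{2\pi}\big(\P_{\X_s}\P_{\X_s}-\P_{\Y_s}\P_{\Y_s}\big).
\end{equation*}
Pairing the first term with $\bm{\varphi}$ and using self-adjointness of $\P_{\X_s}$, it equals $\int_\T (\P_{\X_s}\bm\varphi)\cdot(T_{m_\epsilon^{\rm t}}-\frac{\abs{\log\epsilon}}{2\pi})[\P_{\X_s}\bm f]\,ds$. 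Lemma \ref{lem:mult_RFT_diff} applied with $\P_{\X_s}\bm\varphi$ and $\P_{\X_s}\bm f$ then gives exactly the first term on the right-hand side. The only thing to check is that $\P_{\X_s}\bm\varphi\in H^{-1/2}$, which holds since $\X_s\in H^1$.

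For the projection difference, we write $\P_{\X_s}\P_{\X_s}-\P_{\Y_s}\P_{\Y_s}=\P_{\X_s}-\P_{\Y_s}$ (projections are idempotent). This is multiplication by the matrix $\X_s\otimes\X_s-\Y_s\otimes\Y_s=\bm W_s\otimes\X_s+\Y_s\otimes\bm W_s$, whose $H^1$ norm is bounded by $c\norm{\bm W_s}_{H^1}(\norm{\X_s}_{H^1}+\norm{\Y_s}_{H^1})$. The resulting integral is
\begin{equation*}
\tfrac{\abs{\log\epsilon}}{2\pi}\int_\T \bm\varphi\cdot(\P_{\X_s}-\P_{\Y_s})\bm f\,ds .
\end{equation*}
The key step is to avoid paying the lossy $\epsilon^{-1}$ factor. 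To do this, we insert $T_{m_\epsilon^{\rm t}}^{1/2}T_{m_\epsilon^{\rm t}}^{-1/2}$ on the $\bm\varphi$ side, using that the operator is self-adjoint, and bound
\begin{equation*}
\abs{\log\epsilon}\,\norm{T_{m_\epsilon^{\rm t}}^{1/2}\bm\varphi}_{L^2}\,\norm{T_{m_\epsilon^{\rm t}}^{-1/2}\big[(\P_{\X_s}-\P_{\Y_s})\bm f\big]}_{L^2}.
\end{equation*}
By Lemma \ref{lem:Tinverse} with $\theta=1/2$ and $\ell=0$, the second factor is at most $c\abs{\log\epsilon}^{-1/2}\norm{(\P_{\X_s}-\P_{\Y_s})\bm f}_{H^{1/2}}$. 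The product estimate $\norm{gh}_{H^{1/2}}\le c\norm{g}_{H^1}\norm{h}_{H^{1/2}}$ on $\T$ then bounds this by $c\abs{\log\epsilon}^{-1/2}\norm{\bm W_s}_{H^1}(\norm{\X_s}_{H^1}+\norm{\Y_s}_{H^1})\norm{\bm f}_{H^{1/2}}$. Multiplying out gives the factor $\abs{\log\epsilon}^{1/2}$ as claimed.

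The polynomial weights in the statement, $\norm{\X_s}_{H^1}^2+\norm{\Y_s}_{H^1}^2$, are generous enough to absorb the linear weights here. This holds because by Fenchel's inequality \eqref{eq:fenchel} $\norm{\X_s}_{H^1}\ge c>0$ and likewise for $\Y_s$, so the linear factor is controlled by the squares. In the normal case the weights are absorbed by the $1+\dots$ form in any event.

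The main point of care is making sure Lemma \ref{lem:Tinverse} is invoked only on the projection-difference term, never on the multiplier-difference term, so that no $\epsilon^{-1}$ high-wavenumber loss enters. A secondary check is that $T^{-1/2}_{m_\epsilon^{\rm t}}$ is well defined at $k=0$, which holds because $m_\epsilon^{\rm t}(0)>0$ by \eqref{eq:multN0}.
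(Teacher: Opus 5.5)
Your proof is correct and is essentially the paper's argument. The paper also isolates $\int_\T \P_{\X_s}\bm{\varphi}\cdot\big(T_{m_\epsilon^{\rm t}}-\frac{\abs{\log\epsilon}}{2\pi}\big)[\P_{\X_s}\bm{f}]\,ds$ for Lemma~\ref{lem:mult_RFT_diff}, and it handles the projection-difference terms as you do: write $\bm{\varphi}=T_{m_\epsilon^{\rm t}}^{-1/2}T_{m_\epsilon^{\rm t}}^{1/2}\bm{\varphi}$, apply Lemma~\ref{lem:Tinverse} with $\theta=1/2$, and use the $H^1\times H^{1/2}$ product bound.

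The only difference is that you merge the paper's two projection terms $I_2,I_3$ into the single multiplier $\X_s\otimes\X_s-\Y_s\otimes\Y_s$ via idempotence. This gives linear weights, which are absorbed simply because $\norm{\X_s}_{H^1}\ge\norm{\X_s}_{L^2}=1$, so you do not need Fenchel. It also avoids a misprint in the paper's $I_3$: there $(\P_{\X_s}-\P_{\Y_s})\P_{\X_s}$ should read $(\P_{\X_s}-\P_{\Y_s})\P_{\Y_s}$ for the three pieces to sum correctly.
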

Note that the use of this corollary will be the only place requiring additional regularity on the resistive force theory curve $\Y$ beyond the natural energy space. 
We proceed to prove Lemma \ref{lem:mult_RFT_diff} and Corollary \ref{cor:mult_RFT_diff} together.
\begin{proof}
We begin with the tangential direction of Lemma \ref{lem:mult_RFT_diff}. Using Parseval's theorem and Lemma \ref{lem:multiplierbds} with the low wavenumber correction of Lemma \ref{lem:lowk_refine}, we have
\begin{equation}
\begin{aligned}
    &\abs{\int_\T\bm{\varphi}\cdot\bigg(T_{m_\epsilon^{\rm t}}-\frac{\abs{\log\epsilon}}{2\pi}\bigg)\bm{f}\,ds}
    =\abs{\sum_{\abs{k}=0}^\infty \bigg(m_\epsilon^{\rm t}(k)-\frac{\abs{\log\epsilon}}{2\pi}\bigg)\mc{F}[\bm{\varphi}]\cdot\overline{\mc{F}[\bm{f}]} }\\
    &\quad \le c\sum_{\abs{k}=1}^{\floor{\frac{1}{2\pi\epsilon}}}(1+\log\abs{k}) \abs{\mc{F}[\bm{\varphi}]\cdot\overline{\mc{F}[\bm{f}]}} + c\sum_{\abs{k}=\floor{\frac{1}{2\pi\epsilon}}}^\infty (\epsilon \abs{k})^{-1/2}\abs{m_\epsilon^{\rm t}(k)^{1/2}\mc{F}[\bm{\varphi}]\cdot\overline{\mc{F}[\bm{f}]}} \\
    &\quad \le c\norm{T_{m_\epsilon^{\rm t}}^{1/2}\bm{\varphi}}_{L^2}\norm{\bm{f}}_{L^2}\,.
\end{aligned}
\end{equation}
Here in the sum over high wavenumbers, we have used that $\abs{k}^{-1}\lesssim\epsilon$, in particular, we may trade the extra regularity of $\bm{f}$ to avoid losing factors of $\epsilon$. 
The normal direction bound follows by a nearly identical calculation using the low wavenumber correction of Lemma \ref{lem:lowk_refine} for $m_\epsilon^{\rm n}(k)$.

To show Corollary \ref{cor:mult_RFT_diff}, we again treat the tangential direction explicitly and note that the normal direction follows similarly. We may write
\begin{equation} 
\begin{aligned}
  \int_\T\bm{\varphi}\cdot\bigg(\P_{\X_s} T_{m_\epsilon^{\rm t}}\P_{\X_s}& -\P_{\Y_s}\frac{\abs{\log\epsilon}}{2\pi}\P_{\Y_s}\bigg)[\bm{f}]\,ds = I_1+I_2+I_3\,,\\
  I_1 &= \int_\T \P_{\X_s}\bm{\varphi}\cdot\bigg(T_{m_\epsilon^{\rm t}}-\frac{\abs{\log\epsilon}}{2\pi}\bigg)[\P_{\X_s}\bm{f}]\,ds\\
  I_2 &= \frac{\abs{\log\epsilon}}{2\pi}\int_\T\P_{\X_s}\bm{\varphi}\cdot (\P_{\X_s}-\P_{\Y_s})\bm{f}\,ds \\
  I_3 &= \frac{\abs{\log\epsilon}}{2\pi}\int_\T\bm{\varphi}\cdot(\P_{\X_s}-\P_{\Y_s})\P_{\X_s}\bm{f}\,ds \,.
\end{aligned}
\end{equation}
By Lemma \ref{lem:mult_RFT_diff}, we immediately have
\begin{equation}
  \abs{I_1}\le c\norm{T_{m_\epsilon^{\rm t}}^{1/2}(\P_{\X_s}\bm{\varphi})}_{L^2}\norm{\P_{\X_s}\bm{f}}_{L^2}\,.
\end{equation}
For $I_2$ and $I_3$, writing $\bm{\varphi}=T_{m_\epsilon^{\rm t}}^{-1/2}T_{m_\epsilon^{\rm t}}^{1/2}\bm{\varphi}$, we may use Lemma \ref{lem:Tinverse} and the additional regularity of $\bm{f}$ to estimate  
\begin{equation}
\begin{aligned}
  \abs{I_2} 
  &\le c\abs{\log\epsilon}\abs{\int_\T \big((T_{m_\epsilon^{\rm t}}^{-1/2}T_{m_\epsilon^{\rm t}}^{1/2}\bm{\varphi})\cdot\X_s\big)\X_s\cdot\big((\bm{W}_s\otimes\X_s+\Y_s\otimes\bm{W}_s)\bm{f}\big)\,ds } \\
  &\le c\abs{\log\epsilon}\norm{T_{m_\epsilon^{\rm t}}^{1/2}\bm{\varphi}}_{L^2}\norm{T_{m_\epsilon^{\rm t}}^{-1/2}\big(\X_s\otimes\X_s(\bm{W}_s\otimes\X_s+\Y_s\otimes\bm{W}_s)\bm{f}\big)}_{L^2}\\
  &\le c\abs{\log\epsilon}^{1/2}\norm{T_{m_\epsilon^{\rm t}}^{1/2}\bm{\varphi}}_{L^2}\norm{\X_s\otimes\X_s(\bm{W}_s\otimes\X_s+\Y_s\otimes\bm{W}_s)}_{H^1}\norm{\bm{f}}_{H^{1/2}}\\
  &\le c\abs{\log\epsilon}^{1/2}\norm{\bm{W}_s}_{H^1}(\norm{\X_s}_{H^1}^2+\norm{\Y_s}_{H^1}^2)\norm{T_{m_\epsilon^{\rm t}}^{1/2}\bm{\varphi}}_{L^2}\norm{\bm{f}}_{H^{1/2}}\,,
\end{aligned}
\end{equation}
as well as 
\begin{equation}
\begin{aligned}
  \abs{I_3} &\le c\abs{\log\epsilon}\abs{\int_\T (T_{m_\epsilon^{\rm t}}^{-1/2}T_{m_\epsilon^{\rm t}}^{1/2}\bm{\varphi})\cdot(\bm{W}_s\otimes\X_s+\Y_s\otimes\bm{W}_s)\X_s(\X_s\cdot\bm{f})\,ds} \\
  &\le c\abs{\log\epsilon}\norm{T_{m_\epsilon^{\rm t}}^{1/2}\bm{\varphi}}_{L^2}\norm{T_{m_\epsilon^{\rm t}}^{-1/2}\big((\bm{W}_s\otimes\X_s+\Y_s\otimes\bm{W}_s)\X_s(\X_s\cdot\bm{f})\big)}_{L^2}\\
  &\le c\abs{\log\epsilon}^{1/2}\norm{T_{m_\epsilon^{\rm t}}^{1/2}\bm{\varphi}}_{L^2}\norm{(\bm{W}_s\otimes\X_s+\Y_s\otimes\bm{W}_s)\X_s\otimes\X_s}_{H^1}\norm{\bm{f}}_{H^{1/2}}\\
  &\le c\abs{\log\epsilon}^{1/2}\norm{\bm{W}_s}_{H^1}(\norm{\X_s}_{H^1}^2+\norm{\Y_s}_{H^1}^2)\norm{T_{m_\epsilon^{\rm t}}^{1/2}\bm{\varphi}}_{L^2}\norm{\bm{f}}_{H^{1/2}}\,.
\end{aligned}
\end{equation}
In total, we may bound 
\begin{equation}
\begin{aligned}
  \abs{I_1}+\abs{I_2}+\abs{I_3} &\le c\norm{T_{m_\epsilon^{\rm t}}^{1/2}(\P_{\X_s}\bm{\varphi})}_{L^2}\norm{\P_{\X_s}\bm{f}}_{L^2}\\
  &\quad + c\abs{\log\epsilon}^{1/2}\norm{\bm{W}_s}_{H^1}(\norm{\X_s}_{H^1}^2+\norm{\Y_s}_{H^1}^2)\norm{T_{m_\epsilon^{\rm t}}^{1/2}\bm{\varphi}}_{L^2}\norm{\bm{f}}_{H^{1/2}}
\end{aligned}
\end{equation}
to obtain Corollary \ref{cor:mult_RFT_diff}.
\end{proof}

\subsection{Tension bounds and difference}\label{subsec:tauXYDiff}
In addition to the operator difference estimates from section \ref{subsec:OpDiff}, we will require bounds for terms resulting from the difference in tension behavior between the $\overline{\mc{L}}_\epsilon(\X)$ curve $\X$ and the resistive force theory curve $\Y$.

We first note that the resistive force theory equation for the tension $\tau_Y$ may be obtained in an analogous manner to \eqref{eq:TDP}. Differentiating \eqref{eq:RFT_Yeqn} in $s$ and dotting with $\Y_s$, the inextensibility constraint yields the equation
\begin{equation}\label{eq:tauY_eqn}
  \big(({\bf I}+\Y_s\otimes\Y_s)(\tau_Y\Y_s)_s \big)_s\cdot\Y_s =\big(({\bf I}+\Y_s\otimes\Y_s)\Y_{ssss}\big)_s \cdot\Y_s\,.
\end{equation}
We further note that, by \cite[Lemma 3.3]{albritton2025rods}, $\tau_Y$ satisfying \eqref{eq:tauY_eqn} obeys the bounds
\begin{equation}\label{eq:tauYbd}
\begin{aligned}
  \norm{\tau_Y}_{H^1} &\le c\norm{\Y_{ss}}_{H^1}^2 \,, \\
  \norm{\tau_Y}_{H^2}&\le c(\norm{\Y_s}_{H^2}^4 + \norm{\Y_s}_{H^2}\norm{\Y_s}_{H^3})\,.
\end{aligned}
\end{equation}
We may immediately estimate the terms involving $\tau_Y$ in $\bm{A}^\parallel$ and $\bm{A}^\perp$ of \eqref{eq:Wenergy1} as   
\begin{equation}\label{eq:tauYterm_est}
\begin{aligned}
  \norm{T_{m_\epsilon^{\rm t}}^{1/2}\big(\P_{\X_s}(\tau_Y\bm{W}_s)_s\big)}_{L^2} &\le c\abs{\log\epsilon}^{1/2}\norm{\P_{\X_s}(\tau_Y\bm{W}_s)_s}_{L^2}
  \le c\abs{\log\epsilon}^{1/2}\norm{(\tau_Y)_s}_{L^\infty}\norm{\bm{W}_s}_{H^1}\\
  &\le c\abs{\log\epsilon}^{1/2}(\norm{\Y_s}_{H^2}^4 + \norm{\Y_s}_{H^2}\norm{\Y_s}_{H^3})\norm{\bm{W}_s}_{H^1}\,,
\end{aligned}
\end{equation}
with an identical bound for the normal direction term $\norm{T_{m_\epsilon^{\rm n}}^{1/2}\big(\P_{\X_s}^\perp(\tau_Y\bm{W}_s)_s\big)}_{L^2}$.

We will additionally require a bound for the terms in $\bm{A}^\parallel$ and $\bm{A}^\perp$ involving the difference $\tau_W=\tau_X-\tau_Y$. We show the following.
\begin{lemma}[Tension difference]\label{lem:tensionXY_diff}
  The difference $\tau_W=\tau_X-\tau_Y$ between the tensions corresponding to curves $\X$ and $\Y$, respectively, satisfies the bound 
\begin{equation}\label{eq:tauWbd}
\begin{aligned}
  \norm{T_{m_\epsilon^{\rm t}}^{1/2}((\tau_W)_s\X_s)}_{L^2} &+ 
  \norm{T_{m_\epsilon^{\rm n}}^{1/2}(\tau_W\X_{ss})}_{L^2}
  \le c\bigg(\abs{\log\epsilon}^{1/2}\norm{\bm{W}_s}_{H^1}(1+\norm{\Y_s}_{H^3}^2)\\
  & + \bigg(\norm{T_{m_\epsilon^{\rm t}}^{1/2}\bm{W}_{ssss}}_{L^2}+ 1+\norm{\Y_s}_{H^3}^2\bigg)\big(1+\norm{\X_s}_{H^1}+\norm{\Y_s}_{H^3}^2\big)\bigg)\,,
\end{aligned}
\end{equation}
where $\bm{W}=\X-\Y$.
\end{lemma}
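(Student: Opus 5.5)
The plan is to derive an elliptic equation for $\tau_W$ with the same bilinear form $\mc{B}_\epsilon$ (built on $\X$) as in Lemma \ref{lem:TDP}, and to test it against $\tau_W$ itself. The coercive side is
\[
\mc{B}_\epsilon(\tau_W,\tau_W)=\norm{T_{m_\epsilon^{\rm t}}^{1/2}((\tau_W)_s\X_s)}_{L^2}^2+\norm{T_{m_\epsilon^{\rm n}}^{1/2}(\tau_W\X_{ss})}_{L^2}^2 ,
\]
which is exactly the square of the left-hand side of \eqref{eq:tauWbd}. It is therefore enough to bound the right-hand side, tested against $\tau_W$, by (LHS)$\times$(the claimed quantity), and then divide.

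First I write the weak form of \eqref{eq:TDP} for $\tau_X$: for all $\varphi$,
\[
\mc{B}_\epsilon(\tau_X,\varphi)=\int_\T\overline{\mc{L}}_\epsilon(\X)[\X_{ssss}]\cdot(\varphi\X_s)_s\,ds .
\]
I write the weak form of \eqref{eq:tauY_eqn} for $\tau_Y$ as
\[
\int_\T \mc{L}_{\epsilon,\rm RFT}(\Y)[(\tau_Y\Y_s)_s]\cdot(\varphi\Y_s)_s\,ds=\int_\T \mc{L}_{\epsilon,\rm RFT}(\Y)[\Y_{ssss}]\cdot(\varphi\Y_s)_s\,ds .
\]
Subtracting gives
\[
\mc{B}_\epsilon(\tau_W,\tau_W)=\int_\T\overline{\mc{L}}_\epsilon(\X)[\bm{W}_{ssss}]\cdot(\tau_W\X_s)_s\,ds+\text{(differences)} .
\]
The difference terms are of three kinds:
\begin{itemize}
\item[(a)] $\big(\overline{\mc{L}}_\epsilon(\X)-\mc{L}_{\epsilon,\rm RFT}(\Y)\big)$ applied to $(\Y_{sss}-\tau_Y\Y_s)_s$ and tested against $(\tau_W\X_s)_s$;
\item[(b)] terms replacing $\X_s$ by $\Y_s$ inside the test function and inside $(\tau_Y\cdot)_s$, each carrying a factor of $\bm{W}_s$;
\item[(c)] the leading term containing $\bm{W}_{ssss}$.
\end{itemize}

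For (c), I split into tangential and normal projections and apply Cauchy--Schwarz with $T^{1/2}$ on each side. This gives
\[
\norm{T_{m_\epsilon^{\rm t}}^{1/2}\bm{W}_{ssss}}_{L^2}\cdot\text{LHS},
\]
using $m^{\rm t}\sim m^{\rm n}$ to interchange the operators. The projection $\P_{\X_s}$ does not commute with $T^{1/2}$. I would avoid this commutator by keeping $\P_{\X_s}$ on the $\tau_W$ side, as in the definition of $\mc{B}_\epsilon$, and bounding $\norm{T^{1/2}\P_{\X_s}\bm{W}_{ssss}}$ by $\norm{T^{1/2}\bm{W}_{ssss}}$ plus lower order terms. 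Those lower order terms come with the factor $(1+\norm{\X_s}_{H^1})$, via Lemma \ref{lem:Tinverse} and the $H^1$ multiplier bound.

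For (a), I apply Corollary \ref{cor:mult_RFT_diff} with $\bm{\varphi}=(\tau_W\X_s)_s$ and $\bm{f}=(\Y_{sss}-\tau_Y\Y_s)_s$. The first piece of the corollary is bounded by LHS$\cdot\norm{\bm{f}}_{L^2}$, and $\norm{\bm{f}}_{L^2}\lesssim 1+\norm{\Y_s}_{H^3}^2$ by \eqref{eq:tauYbd}. The second piece gives $\abs{\log\epsilon}^{1/2}\norm{\bm{W}_s}_{H^1}(\cdots)\norm{T^{1/2}\bm{\varphi}}\norm{\bm{f}}_{H^{1/2}}$, where $\norm{\bm{f}}_{H^{1/2}}$ is controlled by $\norm{\Y_s}_{H^3}$ and $\norm{\tau_Y}_{H^2}$; this is the only place the extra regularity of $\Y$ is used.

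For (b), the mismatch terms are handled by writing $\Y_s=\X_s-\bm{W}_s$. The RFT operator is just $\abs{\log\epsilon}$ times a bounded matrix, so I move $\abs{\log\epsilon}^{1/2}$ onto the $\tau_W$ side. On that side, $\abs{\log\epsilon}^{1/2}\norm{g}_{H^{-1/2}}\lesssim\norm{T^{1/2}g}_{L^2}$, since $m_\epsilon\gtrsim\abs{\log\epsilon}\langle k\rangle^{-1}$. This produces $\abs{\log\epsilon}^{1/2}\norm{\bm{W}_s}_{H^1}(1+\norm{\Y_s}_{H^3}^2)\cdot$LHS.

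The main obstacle is controlling $\tau_W$ itself, not only $T^{1/2}$ of $(\tau_W)_s\X_s$ and $\tau_W\X_{ss}$, wherever the difference terms require it, in particular through its mean. I would use the Fenchel argument from Lemma \ref{lem:TDP}, which gives
\[
\abs{\log\epsilon}^{1/2}\norm{\tau_W}_{H^{1/2}}\lesssim(1+\norm{\X_s}_{H^1}^2)\,\text{LHS},
\]
and keep every pairing in $H^{1/2}\times H^{-1/2}$ form so that no factor of $\epsilon^{-1}$ appears. After dividing by LHS, the estimate closes.
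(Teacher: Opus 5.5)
The gap is in step (c). Moving the outer projection onto the test function does not remove the inner one. You still face
\[
\int_\T T_{m_\epsilon^{\rm t}}(\P_{\X_s}\bm{W}_{ssss})\cdot(\tau_W)_s\X_s\,ds+\int_\T T_{m_\epsilon^{\rm n}}(\P_{\X_s}^\perp\bm{W}_{ssss})\cdot\tau_W\X_{ss}\,ds\,.
\]
Write $T$ for either multiplier operator, since they are interchangeable. You therefore need $\norm{T^{1/2}(\P_{\X_s}\bm{W}_{ssss})}_{L^2}\lesssim\norm{T^{1/2}\bm{W}_{ssss}}_{L^2}+(\text{lower order})$, or its dual form with $T^{-1/2}\P_{\X_s}T$. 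In other words, multiplication by $\X_s\otimes\X_s$ must be bounded, uniformly in $\epsilon$, on the space normed by $\norm{T^{1/2}\cdot}_{L^2}$.

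Lemma \ref{lem:Tinverse} and $H^1$ product bounds do not give this. The natural chain $\norm{T^{1/2}(\P_{\X_s}g)}_{L^2}\le c\abs{\log\epsilon}^{1/2}\norm{g}_{L^2}\le c\norm{T^{1/2}g}_{H^{1/2}}$ costs half a derivative beyond the dissipation.

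Worse, the bounds of Lemma \ref{lem:multiplierbds} allow the weight to drop from order $\abs{\log\epsilon}$ to order one across $\abs{k}\approx(2\pi\epsilon)^{-1}$. Take the circle $\X_s=(-\sin 2\pi s,\cos 2\pi s,0)$ and $g=e^{2\pi iKs}\be_x$ with $K=\lceil(2\pi\epsilon)^{-1}\rceil$. Then $\norm{T^{1/2}g}_{L^2}=O(1)$. But $\P_{\X_s}g$ has a mode of size $1/4$ at $K-2$, so $\norm{T^{1/2}(\P_{\X_s}g)}_{L^2}\gtrsim\abs{\log\epsilon}^{1/2}$, while lower-order norms of $g$ are even smaller. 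So no $\epsilon$-uniform estimate of the kind you assert follows from the available tools. Since this lemma feeds a convergence argument at rate $\abs{\log\epsilon}^{-1/2}$, a loss of this size is fatal.

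The missing idea is inextensibility, which makes the tangential part of $\bm{W}_{ssss}$ lower order, so no commutator is ever needed. From \eqref{eq:inex_ids} for both $\X$ and $\Y$,
\[
\X_s\cdot\bm{W}_{ssss}=-\tfrac{3}{2}\p_s\big(\abs{\bm{W}_{ss}}^2+2\,\Y_{ss}\cdot\bm{W}_{ss}\big)-\bm{W}_s\cdot\Y_{ssss}\,.
\]
Using only $m_\epsilon\lesssim\abs{\log\epsilon}$ (that is, forgoing the smoothing of $T^{1/2}$), this gives $\norm{T^{1/2}(\P_{\X_s}\bm{W}_{ssss})}_{L^2}\le c\abs{\log\epsilon}^{1/2}\norm{\X_s\cdot\bm{W}_{ssss}}_{L^2}$. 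Then combine $\abs{\log\epsilon}^{1/2}\norm{\bm{W}_{sss}}_{\dot H^{1/2}}\lesssim\norm{T^{1/2}\bm{W}_{ssss}}_{L^2}$ (Lemma \ref{lem:Tinverse}) with \eqref{eq:interps}. The result is a bound by
\[
\norm{T^{1/2}\bm{W}_{ssss}}_{L^2}\big(\norm{\bm{W}_{ss}}_{L^2}+\norm{\Y_s}_{H^2}\big)+\abs{\log\epsilon}^{1/2}\norm{\bm{W}_s}_{H^1}\norm{\Y_s}_{H^3}\,.
\]
This is exactly where the factor $(1+\norm{\X_s}_{H^1}+\cdots)$ multiplying $\norm{T^{1/2}\bm{W}_{ssss}}_{L^2}$ in \eqref{eq:tauWbd} comes from. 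For the normal part, write $\P_{\X_s}^\perp\bm{W}_{ssss}=\bm{W}_{ssss}-(\X_s\cdot\bm{W}_{ssss})\X_s$, so the unprojected $\bm{W}_{ssss}$ is paired directly with $\tau_W\X_{ss}$. This is what the paper's terms $R^\parallel$ and $R^\perp$ do.

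A secondary point: routing (a) through Corollary \ref{cor:mult_RFT_diff}, and (b) through $H^{1/2}\times H^{-1/2}$ pairings, requires $\norm{(\Y_{sss}-\tau_Y\Y_s)_s}_{H^{1/2}}$, i.e.\ $\norm{\Y_s}_{H^{7/2}}$. That is more than the $\norm{\Y_s}_{H^3}$ in \eqref{eq:tauWbd}, so you would prove a weaker lemma. It would still suffice downstream, since those terms multiply $\abs{\log\epsilon}^{1/2}\norm{\bm{W}_s}_{H^1}$.

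The paper avoids this loss in three steps:
\begin{itemize}
\item It uses $\abs{\Y_s}=1$ and $\Y_s\cdot\Y_{ss}=0$ to reduce the $\Y$-side to the scalar integrands $(\tau_Y)_s\varphi_s$, $\tau_Y\abs{\Y_{ss}}^2\varphi$, $(\Y_s\cdot\Y_{ssss})\varphi_s$ and $(\Y_{ss}\cdot\Y_{ssss})\varphi$.
\item It matches these against the corresponding $\X$-side expressions, so only Lemma \ref{lem:mult_RFT_diff} with $L^2$ data is needed.
\item It handles the leftover mismatch terms by interpolating $\norm{\bm{W}_{ss}}_{L^\infty}$ against the dissipation.
\end{itemize}
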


\begin{proof}
For any $\varphi\in H^{1/2}(\T)$, defining the bilinear forms
\begin{equation}
\begin{aligned}
  \mc{B}_\epsilon^X(\tau_X,\varphi) &= \int_\T \overline{\mc{L}}_\epsilon(\X)[(\tau_X\X_s)_s] \cdot(\varphi\X_s)_s\,ds \,,\\
  \mc{B}_\epsilon^Y(\tau_Y,\varphi) &= \frac{\abs{\log\epsilon}}{4\pi}\int_\T \big(({\bf I}+\Y_s\otimes\Y_s)(\tau_Y\Y_s)_s\big) \cdot(\varphi\Y_s)_s\,ds\,,
\end{aligned}
\end{equation}
we may write the difference between the weak form of the equations satisfied by $\tau_X$ and $\tau_Y$ as 
\begin{equation}\label{eq:diffXY_weak}
\begin{aligned}
  &\mc{B}_\epsilon^X(\tau_X,\varphi) - \mc{B}_\epsilon^Y(\tau_Y,\varphi) \\
  &\qquad = \underbrace{\int_\T \bigg(\overline{\mc{L}}_\epsilon(\X)[\X_{ssss}]\cdot(\varphi\X_s)_s - \frac{\abs{\log\epsilon}}{4\pi}\big(({\bf I}+\Y_s\otimes\Y_s)\Y_{ssss}\big)\cdot(\varphi\Y_s)_s\bigg)\,ds }_{{\rm RHS}}\,.
\end{aligned}
\end{equation}

The left hand side of \eqref{eq:diffXY_weak} may be written as 
\begin{equation}
\begin{aligned}
  \mc{B}_\epsilon^X(\tau_X,\varphi) &- \mc{B}_\epsilon^Y(\tau_Y,\varphi)= B^\parallel +B^\perp \,,\\
  B^\parallel &= \int_\T \bigg(\big(\P_{\X_s}T_{m_\epsilon^{\rm t}}((\tau_X)_s\X_s)\big)\cdot(\varphi\X_s)_s -\frac{\abs{\log\epsilon}}{2\pi}(\tau_Y)_s\Y_s\cdot(\varphi\Y_s)_s \bigg)\,ds \\
  B^\perp &= \int_\T \bigg( \big(\P_{\X_s}^\perp T_{m_\epsilon^{\rm n}}(\tau_X\X_{ss})\big)\cdot(\varphi\X_s)_s  -\frac{\abs{\log\epsilon}}{4\pi}\tau_Y\Y_{ss} \cdot(\varphi\Y_s)_s \bigg)\,ds\,.
\end{aligned}
\end{equation}
We may further write 
\begin{equation}
\begin{aligned}
  B^\parallel &= \int_\T \bigg(\big(\X_s\cdot T_{m_\epsilon^{\rm t}}((\tau_X)_s\X_s)\big) -\frac{\abs{\log\epsilon}}{2\pi}(\tau_Y)_s \bigg)\varphi_s\,ds \\
  &= \int_\T \bigg(\big(\X_s\cdot T_{m_\epsilon^{\rm t}}((\tau_W)_s\X_s)\big) + \X_s\cdot\bigg(T_{m_\epsilon^{\rm t}}-\frac{\abs{\log\epsilon}}{2\pi}\bigg)\big[(\tau_Y)_s\X_s\big] \bigg)\varphi_s\,ds
\end{aligned}
\end{equation}
as well as 
\begin{equation}
\begin{aligned}
 B^\perp &= \int_\T \bigg( \X_{ss}\cdot T_{m_\epsilon^{\rm n}}(\tau_X\X_{ss})  -\frac{\abs{\log\epsilon}}{4\pi}\tau_Y\abs{\Y_{ss}}^2 \bigg)\varphi\,ds \\
  &= \int_\T \bigg( \X_{ss}\cdot T_{m_\epsilon^{\rm n}}(\tau_W\X_{ss}) + \X_{ss}\cdot\bigg(T_{m_\epsilon^{\rm n}} -\frac{\abs{\log\epsilon}}{4\pi}\bigg)[\tau_Y\X_{ss}] \\
  &\qquad\quad + \frac{\abs{\log\epsilon}}{4\pi}\tau_Y(\abs{\X_{ss}}^2-\abs{\Y_{ss}}^2) \bigg)\varphi\,ds \,.
\end{aligned}
\end{equation}
We may thus rewrite the left hand side of \eqref{eq:diffXY_weak} as 
\begin{equation}
\begin{aligned}
  \mc{B}_\epsilon^X(\tau_X,\varphi) &- \mc{B}_\epsilon^Y(\tau_Y,\varphi)= \mc{B}_\epsilon^X(\tau_W,\varphi) + {\rm LHS}\,,\\
  {\rm LHS}&= \int_\T \bigg( \varphi_s\X_s\cdot\bigg(T_{m_\epsilon^{\rm t}}-\frac{\abs{\log\epsilon}}{2\pi}\bigg)\big[(\tau_Y)_s\X_s\big] \\
  &\quad+ \varphi\X_{ss}\cdot\bigg(T_{m_\epsilon^{\rm n}} -\frac{\abs{\log\epsilon}}{4\pi}\bigg)[\tau_Y\X_{ss}]  + \frac{\abs{\log\epsilon}}{4\pi}\tau_Y(\abs{\X_{ss}}^2-\abs{\Y_{ss}}^2)\varphi \bigg)\,ds \,.
\end{aligned}
\end{equation}
Using Lemma \ref{lem:mult_RFT_diff} and the $\tau_Y$ bound \eqref{eq:tauYbd}, we may bound the terms appearing in LHS as 
\begin{equation}\label{eq:LHS_diffbd}
\begin{aligned}
  \abs{{\rm LHS}} &\le c\bigg( \norm{T_{m_\epsilon^{\rm t}}^{1/2}(\varphi_s\X_s)}_{L^2}\norm{(\tau_Y)_s\X_s}_{L^2} + 
  \norm{T_{m_\epsilon^{\rm n}}^{1/2}(\varphi\X_{ss})}_{L^2}\norm{\tau_Y\X_{ss}}_{L^2}\\
  &\qquad + \abs{\log\epsilon}\norm{\bm{W}_s}_{H^1}\norm{\tau_Y}_{L^\infty}(\norm{\bm{W}_{ss}}_{L^\infty}+\norm{\Y_{ss}}_{L^\infty})\norm{\varphi}_{L^2} \bigg) \\
  &\le c\bigg( \norm{T_{m_\epsilon^{\rm t}}^{1/2}(\varphi_s\X_s)}_{L^2}\norm{\Y_{ss}}_{H^1}^2 + 
  \norm{T_{m_\epsilon^{\rm n}}^{1/2}(\varphi\X_{ss})}_{L^2}\norm{\Y_{ss}}_{H^1}^2\norm{\X_s}_{H^1}\\
  &\qquad + \abs{\log\epsilon}\norm{\bm{W}_s}_{H^1}\norm{\Y_{ss}}_{H^1}^2(\norm{\bm{W}_{sss}}_{\dot H^{1/2}}^{2/3}\norm{\bm{W}_s}_{H^1}^{1/3}+\norm{\Y_{ss}}_{H^1})\times\\
  &\qquad\qquad\times\norm{T_{m_\epsilon^{\rm n}}^{-1}T_{m_\epsilon^{\rm n}}(\varphi \X_s)\cdot\X_s}_{L^2} \bigg)  \\
  &\le c\norm{\Y_{ss}}_{H^1}^2\bigg( \norm{T_{m_\epsilon^{\rm t}}^{1/2}(\varphi_s\X_s)}_{L^2} + 
  \norm{T_{m_\epsilon^{\rm n}}^{1/2}(\varphi\X_{ss})}_{L^2}\norm{\X_s}_{H^1}\\
  &\qquad + \norm{\bm{W}_s}_{H^1}(\norm{\bm{W}_{sss}}_{\dot H^{1/2}}^{2/3}\norm{\bm{W}_s}_{H^1}^{1/3}+\norm{\Y_{ss}}_{H^1})\norm{T_{m_\epsilon^{\rm n}}(\varphi \X_s)_s}_{L^2} \bigg) \\
  &\le c\bigg( \norm{T_{m_\epsilon^{\rm t}}^{1/2}(\varphi_s\X_s)}_{L^2} + 
  \norm{T_{m_\epsilon^{\rm n}}^{1/2}(\varphi\X_{ss})}_{L^2}\bigg)\norm{\Y_{ss}}_{H^1}^2\big(1 + \norm{\X_s}_{H^1}^2 + \norm{\Y_{ss}}_{H^1}^2 \\
  &\qquad\qquad + \abs{\log\epsilon}^{1/2}\norm{\bm{W}_s}_{H^1}^{4/3}\norm{\bm{W}_{sss}}_{\dot H^{1/2}}^{2/3}\big)\,.
\end{aligned}
\end{equation}
Here we have also exploited Lemma \ref{lem:Tinverse} to bound $T_{m_\epsilon^{\rm n}}^{-1}$, and have used that we may opt not to use the regularity gain from $T_{m_\epsilon^{\rm n}}^{1/2}$ to avoid an additional factor of $\epsilon^{-1/2}$ from the high wavenumber scaling of $m_\epsilon^{\rm n}(k)$. We have further used that $m_\epsilon^{\rm n}(k)\lesssim m_\epsilon^{\rm t}(k)\lesssim m_\epsilon^{\rm n}(k)$, so we may interchange $T_{m_\epsilon^{\rm t}}$ and $T_{m_\epsilon^{\rm n}}$ in the above bounds.

We next turn to the right hand side terms RHS of \eqref{eq:diffXY_weak}, which we may write as 
\begin{equation}
\begin{aligned}
  {\rm RHS} &= R^\parallel + R^\perp \,,\\
  R^\parallel &= \int_\T \bigg(\X_s\cdot T_{m_\epsilon^{\rm t}}(\X_s\cdot\X_{ssss}\X_s) - \frac{\abs{\log\epsilon}}{2\pi}\Y_s\cdot\Y_{ssss}\bigg)\varphi_s\,ds \\
  R^\perp &= \int_\T \bigg(\X_{ss}\cdot T_{m_\epsilon^{\rm n}}(\P_{\X_s}^\perp\X_{ssss}) - \frac{\abs{\log\epsilon}}{4\pi}\Y_{ssss}\cdot\Y_{ss} \bigg)\varphi\,ds \,.
\end{aligned}
\end{equation}
Using the inextensibility identities \eqref{eq:inex_ids}, we may write
\begin{equation}
\begin{aligned}
  R^\parallel &= -3\int_\T \varphi_s\X_s\cdot T_{m_\epsilon^{\rm t}}\big( (\X_{ss}\cdot\bm{W}_{sss}+\bm{W}_{ss}\cdot\Y_{sss})\X_s\big) \,ds\\
  &\qquad -3\int_\T \varphi_s\X_s\cdot \bigg(T_{m_\epsilon^{\rm t}}- \frac{\abs{\log\epsilon}}{2\pi}\bigg)\big[\X_s \Y_{ss}\cdot\Y_{sss}\big]\,ds\\
  &= -3\int_\T \varphi_s\X_s\cdot T_{m_\epsilon^{\rm t}}\bigg( \frac{1}{2}\p_s\abs{\bm{W}_{ss}}^2\X_s+\p_s(\Y_{ss}\cdot\bm{W}_{ss})\X_s\bigg) \,ds\\
  &\qquad -\frac{3}{2}\int_\T \varphi_s\X_s\cdot \bigg(T_{m_\epsilon^{\rm t}}- \frac{\abs{\log\epsilon}}{2\pi}\bigg)\big[\p_s\abs{\Y_{ss}}^2\X_s\big]\,ds\,.
\end{aligned}
\end{equation}
Using Lemma \ref{lem:mult_RFT_diff} and the interpolation inequalities \eqref{eq:interps}, we may estimate $R^\parallel$ as 
\begin{equation}\label{eq:Rpar_bd}
\begin{aligned}
  \abs{R^\parallel} &\le c \norm{T_{m_\epsilon^{\rm t}}^{1/2}(\varphi_s\X_s)}_{L^2}
  \bigg(\norm{T_{m_\epsilon^{\rm t}}^{1/2}(\p_s\abs{\bm{W}_{ss}}^2\X_s)}_{L^2}+\norm{T_{m_\epsilon^{\rm t}}^{1/2}(\p_s(\bm{W}_{ss}\cdot\Y_{ss})\X_s)}_{L^2}\\
  &\hspace{5cm} + \norm{\X_s \Y_{ss}\cdot\Y_{sss}}_{L^2}\bigg)\\
  &\le c\norm{T_{m_\epsilon^{\rm t}}^{1/2}(\varphi_s\X_s)}_{L^2}
  \bigg(\abs{\log\epsilon}^{1/2}\norm{\abs{\bm{W}_{ss}}^2}_{\dot H^1}
  +\abs{\log\epsilon}^{1/2}\norm{\bm{W}_{ss}\cdot\Y_{ss}}_{\dot H^1}+ \norm{\abs{\Y_{ss}}^2}_{\dot H^1}\bigg)\\
  &\le c\norm{T_{m_\epsilon^{\rm t}}^{1/2}(\varphi_s\X_s)}_{L^2}
  \bigg(\abs{\log\epsilon}^{1/2}\norm{\bm{W}_{sss}}_{\dot H^{1/2}}(\norm{\bm{W}_s}_{H^1}+\norm{\Y_s}_{H^3})\\
  &\hspace{4cm}
  +\abs{\log\epsilon}^{1/2}\norm{\bm{W}_s}_{H^1}\norm{\Y_s}_{H^3}
  + \norm{\Y_{ss}}_{H^1}^2\bigg)\\
  &\le c\norm{T_{m_\epsilon^{\rm t}}^{1/2}(\varphi_s\X_s)}_{L^2}
  \bigg(\norm{T_{m_\epsilon^{\rm t}}^{1/2}\bm{W}_{ssss}}_{L^2}(\norm{\bm{W}_s}_{H^1}+\norm{\Y_s}_{H^3}) + \norm{\Y_{ss}}_{H^1}^2\\
  &\hspace{4cm}
  +\abs{\log\epsilon}^{1/2}\norm{\bm{W}_s}_{H^1}\norm{\Y_s}_{H^3}
  \bigg)\,.
\end{aligned}
\end{equation}
Here, as in the estimate \eqref{eq:LHS_diffbd} for LHS, we have opted to forego the regularity gain from $T_{m_\epsilon^{\rm t}}^{1/2}$ in order to avoid an additional factor of $\epsilon^{-1/2}$ from the high wavenumber behavior of $m_\epsilon^{\rm t}(k)$. Furthermore, in the last line, we have used that 
\begin{equation}
\norm{\bm{W}_{sss}}_{\dot H^{1/2}}=\|T_{m_\epsilon^{\rm t}}^{-1/2}T_{m_\epsilon^{\rm t}}^{1/2}\bm{W}_{sss}\|_{\dot H^{1/2}}\le c\abs{\log\epsilon}^{-1/2}\|T_{m_\epsilon^{\rm t}}^{1/2}\bm{W}_{sss}\|_{\dot H^1}\,,
\end{equation}
by Lemma \ref{lem:Tinverse}. 

We may similarly write $R^\perp$ as 
\begin{equation}
\begin{aligned}
  R^\perp &= \int_\T \bigg(\X_{ss}\cdot T_{m_\epsilon^{\rm n}}(\X_{ssss}+3\X_{ss}\cdot\X_{sss}\X_s) - \frac{\abs{\log\epsilon}}{4\pi}(\Y_{ssss}+3\Y_{ss}\cdot\Y_{sss}\Y_s)\cdot\Y_{ss} \bigg)\varphi\,ds \\
  &= \int_\T \varphi\X_{ss}\cdot T_{m_\epsilon^{\rm n}}\big(\bm{W}_{ssss}+3\X_{ss}\cdot\bm{W}_{sss}\X_s +3\bm{W}_{ss}\cdot\Y_{sss}\X_s +3\Y_{ss}\cdot\Y_{sss}\bm{W}_s\big)\,ds\\
  &\quad +\int_\T \varphi\bigg(\X_{ss}\cdot\bigg(T_{m_\epsilon^{\rm n}}-\frac{\abs{\log\epsilon}}{4\pi}\bigg)+ \frac{\abs{\log\epsilon}}{4\pi}\bm{W}_{ss}\cdot \bigg)[\Y_{ssss}+3\Y_{ss}\cdot\Y_{sss}\Y_s] \,ds\,.
\end{aligned}
\end{equation}
Again using Lemma \ref{lem:mult_RFT_diff} and the inequalities \eqref{eq:interps}, we may estimate 
\begin{equation}
\begin{aligned}
  \abs{R^\perp} &\le c\norm{T_{m_\epsilon^{\rm n}}^{1/2}(\varphi\X_{ss})}_{L^2}\bigg(\norm{T_{m_\epsilon^{\rm n}}^{1/2}\bm{W}_{ssss}}_{L^2}+\norm{T_{m_\epsilon^{\rm n}}^{1/2}(\p_s\abs{\bm{W}_{ss}}^2\X_s)}_{L^2}\\
  &\qquad\qquad  +\norm{T_{m_\epsilon^{\rm n}}^{1/2}(\p_s(\bm{W}_{ss}\cdot\Y_{ss})\X_s)}_{L^2} +\norm{T_{m_\epsilon^{\rm n}}^{1/2}(\p_s\abs{\Y_{ss}}^2\bm{W}_s)}_{L^2}\bigg) \\
  &\qquad +c\bigg(\norm{T_{m_\epsilon^{\rm n}}^{1/2}(\varphi\X_{ss})}_{L^2}+ \abs{\log\epsilon}\norm{\varphi}_{L^2}\norm{\bm{W}_{ss}}_{L^\infty}\bigg)\big( \norm{\Y_s}_{H^3}+ \norm{\Y_{ss}}_{H^1}^2\big) \\
  &\le c\norm{T_{m_\epsilon^{\rm n}}^{1/2}(\varphi\X_{ss})}_{L^2}\bigg(\norm{T_{m_\epsilon^{\rm n}}^{1/2}\bm{W}_{ssss}}_{L^2}(1+\norm{\bm{W}_s}_{H^1}+\norm{\Y_s}_{H^3}) \\
  &\qquad\qquad +(1+\abs{\log\epsilon}^{1/2}\norm{\bm{W}_s}_{H^1})(\norm{\Y_s}_{H^3}+\norm{\Y_{ss}}_{H^1}^2) \bigg) \\
  &\qquad + c\abs{\log\epsilon}\norm{T_{m_\epsilon^{\rm n}}^{-1}T_{m_\epsilon^{\rm n}}(\varphi\X_s)\cdot\X_s}_{L^2}\norm{\bm{W}_{sss}}_{\dot H^{1/2}}^{2/3}\norm{\bm{W}_s}_{H^1}^{1/3}\big( \norm{\Y_s}_{H^3}+ \norm{\Y_{ss}}_{H^1}^2\big)\\
   &\le c\norm{T_{m_\epsilon^{\rm n}}^{1/2}(\varphi\X_{ss})}_{L^2}\bigg(\norm{T_{m_\epsilon^{\rm n}}^{1/2}\bm{W}_{ssss}}_{L^2}(1+\norm{\bm{W}_s}_{H^1}+\norm{\Y_s}_{H^3}) \\
  &\qquad\qquad +(1+\abs{\log\epsilon}^{1/2}\norm{\bm{W}_s}_{H^1})(\norm{\Y_s}_{H^3}+\norm{\Y_{ss}}_{H^1}^2) \bigg) \\
  &\qquad + c\abs{\log\epsilon}^{1/2}\norm{T_{m_\epsilon^{\rm n}}^{1/2}(\varphi\X_s)_s}_{L^2}\norm{\bm{W}_{sss}}_{\dot H^{1/2}}^{2/3}\norm{\bm{W}_s}_{H^1}^{1/3}\big( \norm{\Y_s}_{H^3}+ \norm{\Y_{ss}}_{H^1}^2\big)\,.
\end{aligned}
\end{equation}
Here we have reused some of the estimates from the LHS bound~\eqref{eq:LHS_diffbd} and the $R^\parallel$ bound~\eqref{eq:Rpar_bd}.

Altogether, taking $\varphi=\tau_W$ in \eqref{eq:diffXY_weak} and combining the bounds for LHS and RHS, we may estimate 
\begin{equation}\label{eq:BXtautau}
\begin{aligned}
  &\abs{\bm{B}_\epsilon^X(\tau_W,\tau_W)}\le \abs{\rm LHS} + \abs{\rm RHS}\\
  &\le c\bigg( \norm{T_{m_\epsilon^{\rm t}}^{1/2}((\tau_W)_s\X_s)}_{L^2} + 
  \norm{T_{m_\epsilon^{\rm n}}^{1/2}(\tau_W\X_{ss})}_{L^2}\bigg)\bigg(\abs{\log\epsilon}^{1/2}\norm{\bm{W}_s}_{H^1}(\norm{\Y_s}_{H^3}+\norm{\Y_s}_{H^2}^2)\\
  &\qquad  + \abs{\log\epsilon}^{1/2}\norm{\bm{W}_s}_{H^1}^{4/3}\norm{\bm{W}_{sss}}_{\dot H^{1/2}}^{2/3}(\norm{\Y_s}_{H^3}+\norm{\Y_{ss}}_{H^1}^2)\\
  &\qquad\quad + \norm{T_{m_\epsilon^{\rm t}}^{1/2}\bm{W}_{ssss}}_{L^2}(1+\norm{\bm{W}_s}_{H^1}+\norm{\Y_s}_{H^3})\\
  &\qquad\qquad + (\norm{\Y_s}_{H^3}+\norm{\Y_{ss}}_{H^1}^2)\big(1 + \norm{\X_s}_{H^1}^2 + \norm{\Y_{ss}}_{H^1}^2\big)\bigg)\\
  &\le c\bigg( \norm{T_{m_\epsilon^{\rm t}}^{1/2}((\tau_W)_s\X_s)}_{L^2} + 
  \norm{T_{m_\epsilon^{\rm n}}^{1/2}(\tau_W\X_{ss})}_{L^2}\bigg)\bigg(\abs{\log\epsilon}^{1/2}\norm{\bm{W}_s}_{H^1}(\norm{\Y_s}_{H^3}+\norm{\Y_s}_{H^2}^2)\\
  &\qquad\qquad  + \norm{T_{m_\epsilon^{\rm t}}^{1/2}\bm{W}_{ssss}}_{L^2}(1+\norm{\X_s}_{H^1}+\norm{\Y_s}_{H^3}+\norm{\Y_{ss}}_{H^1}^2)\\
  &\qquad\qquad\quad + (\norm{\Y_s}_{H^3}+\norm{\Y_{ss}}_{H^1}^2)\big(1 + \norm{\X_s}_{H^1}^2 + \norm{\Y_{ss}}_{H^1}^2\big)\bigg)\,.
\end{aligned}
\end{equation}
Here, to simplify, we have used that $m_\epsilon^{\rm t}(k)\lesssim m_\epsilon^{\rm n}(k) \lesssim m_\epsilon^{\rm t}(k)$, so we may interchange $T_{m_\epsilon^{\rm t}}$ and $T_{m_\epsilon^{\rm n}}$. In addition, we have replaced one instance of $\norm{\bm{W}_s}_{H^1}\le \norm{\X_s}_{H^1}+\norm{\Y_s}_{H^1}$, and used that
\begin{equation}
\begin{aligned}
  \norm{\bm{W}_{sss}}_{\dot H^{1/2}}^{2/3}\norm{\bm{W}_s}_{H^1}^{1/3}&\le c\abs{\log\epsilon}^{-1/3}\norm{T_{m_\epsilon^{\rm t}}\bm{W}_{ssss}}_{L^2}^{2/3}\norm{\bm{W}_s}_{H^1}^{1/3}\\
  &\le c(\abs{\log\epsilon}^{-1/2}\norm{T_{m_\epsilon^{\rm t}}\bm{W}_{ssss}}_{L^2} + \norm{\bm{W}_s}_{H^1})\,.
\end{aligned}
\end{equation} 
Finally, using that 
\begin{equation}
  \bm{B}_\epsilon^X(\tau_W,\tau_W) = \norm{T_{m_\epsilon^{\rm t}}^{1/2}((\tau_W)_s\X_s)}_{L^2}^2 + 
  \norm{T_{m_\epsilon^{\rm n}}^{1/2}(\tau_W\X_{ss})}_{L^2}^2\,,
\end{equation}
applying Young's inequality to \eqref{eq:BXtautau} then yields \eqref{eq:tauWbd}.
\end{proof}

\subsection{Energy bound for curve difference}
Using the results of sections \ref{subsec:OpDiff} and \ref{subsec:tauXYDiff}, we may proceed to estimate the terms $\bm{A}^\parallel$ and $\bm{A}^\perp$ appearing on the right hand side of the energy identity~\eqref{eq:Wenergy1}.

We may first apply Corollary \ref{cor:mult_RFT_diff} to estimate $\bm{A}^\parallel$ as 
\begin{equation}
\begin{aligned}
  \abs{\bm{A}^\parallel} &\le \norm{T_{m_\epsilon^{\rm t}}^{1/2}(\P_{\X_s}\bm{W}_{ssss})}_{L^2}\bigg(\norm{T_{m_\epsilon^{\rm t}}^{1/2}\big((\tau_W)_s\X_s\big)}_{L^2}+\norm{T_{m_\epsilon^{\rm t}}^{1/2}\big(\P_{\X_s}(\tau_Y\bm{W}_s)_s\big)}_{L^2} \bigg) \\
  &\quad + c\norm{T_{m_\epsilon^{\rm t}}^{1/2}(\P_{\X_s}\bm{W}_{ssss})}_{L^2}\norm{\P_{\X_s}\big(\Y_{sss}-\tau_Y\Y_s \big)_s}_{L^2}\\
  &\qquad + c\abs{\log\epsilon}^{1/2}\norm{\bm{W}_s}_{H^1}(\norm{\X_s}_{H^1}^2+\norm{\Y_s}_{H^1}^2)\norm{T_{m_\epsilon^{\rm t}}^{1/2}\bm{W}_{ssss}}_{L^2}\norm{\Y_{sss}-\tau_Y\Y_s }_{\dot H^{3/2}}\,,
\end{aligned}
\end{equation}
and $\bm{A}^\perp$ as 
\begin{equation}
\begin{aligned}
  \abs{\bm{A}^\perp} &\le \norm{T_{m_\epsilon^{\rm n}}^{1/2}(\P_{\X_s}^\perp\bm{W}_{ssss})}_{L^2}\bigg( \norm{T_{m_\epsilon^{\rm n}}^{1/2}(\tau_W\X_{ss})}_{L^2}
  + \norm{T_{m_\epsilon^{\rm n}}^{1/2}\big(\P_{\X_s}^\perp(\tau_Y\bm{W}_s)_s \big)}_{L^2}\bigg) \\
  &\quad + c\norm{T_{m_\epsilon^{\rm n}}^{1/2}(\P_{\X_s}^\perp\bm{W}_{ssss})}_{L^2}\norm{\P_{\X_s}^\perp\big(\Y_{sss}-\tau_Y\Y_s \big)_s}_{L^2}\\
  &\quad + c\abs{\log\epsilon}^{1/2}\norm{\bm{W}_s}_{H^1}(1+\norm{\X_s}_{H^1}^2+\norm{\Y_s}_{H^1}^2)\norm{T_{m_\epsilon^{\rm n}}^{1/2}\bm{W}_{ssss}}_{L^2}\norm{\Y_{sss}-\tau_Y\Y_s}_{\dot H^{3/2}}\,.
\end{aligned}
\end{equation}
Together, using Lemma \ref{lem:tensionXY_diff} and the bound \eqref{eq:tauYterm_est} for the tension terms involving $\tau_Y$, we obtain the bound
\begin{equation}\label{eq:AparAperp}
\begin{aligned}
  \abs{\bm{A}^\parallel}+\abs{\bm{A}^\perp} &\le c\bigg(\norm{T_{m_\epsilon^{\rm t}}^{1/2}(\P_{\X_s}\bm{W}_{ssss})}_{L^2}+\norm{T_{m_\epsilon^{\rm n}}^{1/2}(\P_{\X_s}^\perp\bm{W}_{ssss})}_{L^2}\bigg)\times\\
  &\quad\times \bigg(\abs{\log\epsilon}^{1/2}\norm{\bm{W}_s}_{H^1}(1+\norm{\X_s}_{H^1}^2+\norm{\Y_s}_{H^1}^2)(1+\norm{\Y_s}_{H^3}^5+ \norm{\Y_s}_{H^{7/2}}) \\
  &\qquad + \bigg(\norm{T_{m_\epsilon^{\rm t}}^{1/2}\bm{W}_{ssss}}_{L^2}+ 1+\norm{\Y_s}_{H^3}^2\bigg)\big(1+\norm{\X_s}_{H^1}+\norm{\Y_s}_{H^3}^2\big)\bigg)\,.
\end{aligned}
\end{equation}
Now, recalling the form of the energy identity \eqref{eq:Wenergy1} under the time rescaling \eqref{eq:t_rescale}, we have
\begin{equation}\label{eq:Wenergy2}
\begin{aligned}
  \abs{\log\epsilon}\frac{\p}{\p \underline{t}}\int_\T\abs{\bm{W}_{ss}}^2\,ds+\norm{T_{m_\epsilon^{\rm t}}^{1/2}(\P_{\X_s}\bm{W}_{ssss})}_{L^2_s}^2  
  +\norm{T_{m_\epsilon^{\rm n}}^{1/2}(\P_{\X_s}^\perp\bm{W}_{ssss})}_{L^2_s}^2 \le \abs{\bm{A}^\parallel} +\abs{\bm{A}^\perp} \,.
\end{aligned}
\end{equation}
Applying Young's inequality to \eqref{eq:AparAperp} thus yields the energy bound 
\begin{equation}\label{eq:Wenergy3}
\begin{aligned}
  &\frac{\p}{\p \underline{t}}\int_\T\abs{\bm{W}_{ss}}^2\,ds+\norm{T_{m_\epsilon^{\rm t}}^{1/2}(\P_{\X_s}\bm{W}_{ssss})}_{L^2_s}^2  
  +\norm{T_{m_\epsilon^{\rm n}}^{1/2}(\P_{\X_s}^\perp\bm{W}_{ssss})}_{L^2_s}^2\\
  &\quad \le c\bigg(\norm{\bm{W}_s}_{H^1_s}^2(1+\norm{\X_s}_{H^1_s}^2+\norm{\Y_s}_{H^1_s}^2)^2(1+\norm{\Y_s}_{H^3_s}^5+ \norm{\Y_s}_{H^{7/2}_s})^2 \\
  &\qquad + \abs{\log\epsilon}^{-1}\bigg(\norm{T_{m_\epsilon^{\rm t}}^{1/2}\bm{W}_{ssss}}_{L^2_s}^2+ (1+\norm{\Y_s}_{H^3_s}^2)^2\bigg)\big(1+\norm{\X_s}_{H^1_s}+\norm{\Y_s}_{H^3_s}^2\big)^2\bigg)\,.
\end{aligned}
\end{equation}

We may define the energy and dissipation quantities
\begin{equation}
  E_W(\underline{t}) = \norm{\bm{W}_{ss}}_{L^2_s}^2\,, \quad 
  D_W(\underline{t}) = \norm{T_{m_\epsilon^{\rm t}}^{1/2}(\P_{\X_s}\bm{W}_{ssss})}_{L^2_s}^2  
  +\norm{T_{m_\epsilon^{\rm n}}^{1/2}(\P_{\X_s}^\perp\bm{W}_{ssss})}_{L^2_s}^2\,,
\end{equation}
and note that $\norm{\bm{W}_s}_{H^1_s}^2\lesssim E_W(\underline{t})$ since $\bm{W}_s$ has zero mean on $\T$. We further define the quantities 
\begin{equation}
\begin{aligned}
  C_{XY1}(\underline{t}) &= c(1+\norm{\X_s}_{H^1_s}^2+\norm{\Y_s}_{H^1_s}^2)^2(1+\norm{\Y_s}_{H^3_s}^5+ \norm{\Y_s}_{H^{7/2}_s})^2\,, \\
  C_{XY2}(\underline{t}) &= c\big(1+\norm{\X_s}_{H^1_s}+\norm{\Y_s}_{H^3_s}^2\big)^2\,, \\
  F_{XY}(\underline{t}) &= c(1+\norm{\Y_s}_{H^3_s}^2)^2\big(1+\norm{\X_s}_{H^1_s}+\norm{\Y_s}_{H^3_s}^2\big)^2\,.
\end{aligned}
\end{equation}
By \eqref{eq:energy_ineq}, we have that $\norm{\X_s}_{H^1_s}$ is uniformly bounded in time, independent of $\epsilon$. Furthermore, since $\Y\in C_tH^4_s\cap L^2_{t,{\rm loc}}H^6_s$, we have $C_{XY1}\in L^1_t$ while $C_{XY2}$, $F_{XY}\in L^\infty_t$ with $\epsilon$-independent bounds, as the time-rescaled evolution for $\bm{Y}$ no longer depends on $\epsilon$.

We may thus rewrite \eqref{eq:Wenergy3} as 
\begin{equation}
  \p_{\underline{t}} E_W(\underline{t}) + D_W(\underline{t})
  \le C_{XY1}(\underline{t}) E_W(\underline{t}) + \abs{\log\epsilon}^{-1}C_{XY2}(\underline{t})D_W(\underline{t}) + \abs{\log\epsilon}^{-1}F_{XY}(\underline{t})\,.
\end{equation}
Then, given a fixed time interval $[0,T]$, for $\epsilon$ sufficiently small, we may absorb $\abs{\log\epsilon}^{-1}C_{XY2}D_W(\underline{t})$ into the left hand side terms to obtain 
\begin{equation}\label{eq:diff_ineq}
  \p_{\underline{t}} E_W(\underline{t}) + \frac{1}{2}D_W(\underline{t})
  \le C_{XY1}(\underline{t}) E_W(\underline{t})  + \abs{\log\epsilon}^{-1}F_{XY}(\underline{t})\,,
  \qquad E_W(0)=0\,.
\end{equation}
For every $T<\infty$, a Gr\"onwall inequality applied to \eqref{eq:diff_ineq} then yields that there exists $C_W=C_W(T,\norm{\X}_{L^\infty_tH^2_s},\norm{\Y}_{L^\infty_tH^4_s\cap L^2_tH^6_s})$ independent of $\epsilon$ such that  
\begin{equation}\label{eq:EW_DW}
\begin{aligned}
  E_W(\underline{t})+ \int_0^{\underline{t}}D_W(\underline{t}')d\underline{t}' &\le \abs{\log\epsilon}^{-1} \,C_W(T,\norm{\X}_{L^\infty_tH^2_s},\norm{\Y}_{L^\infty_tH^4_s\cap L^2_tH^6_s}) \,.
\end{aligned}
\end{equation}
In particular, on any fixed time interval, the left hand side goes to zero as $\epsilon\to 0$.

It remains to control the mean of $\bm{W}$, which we denote by $\bm{W}_0$. Using equation \eqref{eq:Wevolution1} under the time rescaling \eqref{eq:t_rescale}, we have
\begin{equation}
\begin{aligned}
  \frac{\p\bm{W}_0}{\p \underline{t}} &= -\abs{\log\epsilon}^{-1}\int_\T\overline{\mc{L}}_\epsilon(\X)[(\X_{sss}-\tau_X\X_s)_s]\,ds
  + \frac{1}{4\pi}\int_\T ({\bf I}+\Y_s\otimes\Y_s)(\Y_{sss}-\tau_Y\Y_s)_s\,ds\\
  &= -\abs{\log\epsilon}^{-1}\underbrace{\int_\T\overline{\mc{L}}_\epsilon(\X)[(\bm{W}_{sss}-\tau_W\X_s-\tau_Y\bm{W}_s)_s]\,ds}_{\bm{J}_1}\\
  &\quad -\abs{\log\epsilon}^{-1}\underbrace{\int_\T\bigg(\overline{\mc{L}}_\epsilon(\X)-\frac{1}{4\pi}({\bf I}+\Y_s\otimes\Y_s)\bigg)[(\Y_{sss} -\tau_Y\Y_s)_s]\,ds}_{\bm{J}_2}\,,
\end{aligned}
\end{equation}
with $\bm{W}_0\big|_{\underline{t}=0}=0$. Using equation \eqref{eq:tauYterm_est} and Lemma \ref{lem:tensionXY_diff}, we may estimate $\bm{J}_1$ as 
\begin{equation}
\begin{aligned}
  \abs{\bm{J}_1} &\le c\big(\norm{T_{m_\epsilon^{\rm t}}(\P_{\X_s}\bm{W}_{ssss})}_{L^2_s} + \norm{T_{m_\epsilon^{\rm n}}(\P_{\X_s}^\perp\bm{W}_{ssss})}_{L^2_s}\big)\\
  &\quad + c\big(\norm{T_{m_\epsilon^{\rm t}}((\tau_W)_s\X_s)}_{L^2_s} + \norm{T_{m_\epsilon^{\rm n}}(\tau_W\X_{ss})}_{L^2_s}\big)\\
  &\quad + c\big(\norm{T_{m_\epsilon^{\rm t}}(\P_{\X_s}(\tau_Y\bm{W}_s)_s)}_{L^2_s} + \norm{T_{m_\epsilon^{\rm n}}(\P_{\X_s}^\perp(\tau_Y\bm{W}_s)_s)}_{L^2_s}\big)\\
  &\le c\abs{\log\epsilon}^{1/2}D_W(\underline{t})^{1/2}
  + c\abs{\log\epsilon}^{1/2}\bigg(\abs{\log\epsilon}^{1/2}\norm{\bm{W}_s}_{H^1}(1+\norm{\Y_s}_{H^3}^2)\\
  &\quad + \big(D_W(\underline{t})^{1/2} + 1 +\norm{\Y_s}_{H^3}^2\big)(1+\norm{\X_s}_{H^1}+\norm{\Y_s}_{H^3}^2)\bigg)\\
  &\quad + c\abs{\log\epsilon}\norm{\bm{W}_s}_{H^1}(\norm{\Y_s}_{H^2}^4+\norm{\Y_s}_{H^2}\norm{\Y_s}_{H^3})\\
  &\le c\abs{\log\epsilon}^{1/2}D_W(\underline{t})^{1/2}
  + c\abs{\log\epsilon}E_W(\underline{t})^{1/2}(1+\norm{\Y_s}_{H^3}^4)\\
  &\quad + c\abs{\log\epsilon}^{1/2}\big(1 +\norm{\Y_s}_{H^3}^2\big)(1+\norm{\X_s}_{H^1}+\norm{\Y_s}_{H^3}^2)\\
  &\le \abs{\log\epsilon}^{1/2}\big(cD_W(\underline{t})^{1/2}+C(T,\norm{\X}_{L^\infty_tH^2_s},\norm{\Y}_{L^\infty_tH^4_s\cap L^2_tH^6_s}) \big)\,,
\end{aligned}
\end{equation}
using \eqref{eq:EW_DW}.
Furthermore, using Corollary \ref{cor:mult_RFT_diff}, we may bound $\bm{J}_2$ as
\begin{equation}
\begin{aligned}
  \abs{\bm{J}_2} &\le c\bigg(\norm{T_{m_\epsilon^{\rm t}}^{1/2}(\X_s\otimes\X_s)}_{L^2_s} +\norm{T_{m_\epsilon^{\rm n}}^{1/2}({\bf I}-\X_s\otimes\X_s)}_{L^2_s} \bigg)\times\\
  &\qquad\times \bigg(\norm{\P_{\X_s}(\Y_{sss} -\tau_Y\Y_s)_s}_{L^2_s} + \norm{\P_{\X_s}^\perp(\Y_{sss} -\tau_Y\Y_s)_s}_{L^2_s} \\
  &\qquad\quad +\abs{\log\epsilon}^{1/2}\norm{\bm{W}_s}_{H^1_s}(1+\norm{\X_s}_{H^1_s}^2+\norm{\Y_s}_{H^1_s}^2)\norm{(\Y_{sss} -\tau_Y\Y_s)_s}_{H^{1/2}_s}\bigg)\\
  &\le c\abs{\log\epsilon}^{1/2}\big(1+\norm{\Y_s}_{H^3_s}^3\big)\\
  &\quad +c\abs{\log\epsilon}E_W^{1/2}(\underline{t})(1+\norm{\X_s}_{H^1_s}^2+\norm{\Y_s}_{H^1_s}^2)(1+\norm{\Y_s}_{H^3_s}^5+\norm{\Y_s}_{H^{7/2}_s})\\
  &\le \abs{\log\epsilon}^{1/2}C(T,\norm{\X}_{L^\infty_tH^2_s},\norm{\Y}_{L^\infty_tH^4_s\cap L^2_tH^6_s})\,,
\end{aligned}
\end{equation}
again by \eqref{eq:EW_DW}.
In total, we have that $\bm{W}_0$ satisfies
\begin{equation}
  \frac{\p}{\p\underline{t}}\abs{\bm{W}_0}^2 \le \abs{\log\epsilon}^{-1}\big(cD_W(\underline{t})+C(T,\norm{\X}_{L^\infty_tH^2_s},\norm{\Y}_{L^\infty_tH^4_s\cap L^2_tH^6_s}) \big)\,, 
\end{equation}
with $\abs{\bm{W}_0}^2\big|_{\underline{t}=0}=0$. Integrating in time from 0 to $T$, we obtain 
\begin{equation}
  \abs{\bm{W}_0}^2 \le \abs{\log\epsilon}^{-1}C(T,\norm{\X}_{L^\infty_tH^2_s},\norm{\Y}_{L^\infty_tH^4_s\cap L^2_tH^6_s})\,.
\end{equation}

Combining the bounds for $E_W$, $D_W$, and $\abs{\bm{W}_0}^2$, we obtain
\begin{equation}
\begin{aligned}
  &\norm{\bm{W}}_{L^\infty_t H^2_s} + \norm{T_{m_\epsilon^{\rm t}}^{1/2}(\P_{\X_s}\bm{W}_{ssss})}_{L^2_tL^2_s} + \norm{T_{m_\epsilon^{\rm n}}^{1/2}(\P_{\X_s}\bm{W}_{ssss})}_{L^2_tL^2_s}\\
  &\qquad \le \abs{\log\epsilon}^{-1/2}C(T,\norm{\X}_{L^\infty_tH^2_s},\norm{\Y}_{L^\infty_tH^4_s\cap L^2_tH^6_s})\,.
\end{aligned}
\end{equation}
Using the lower bound of Lemma \ref{lem:multiplierbds} for $T_{m_\epsilon^{\rm t}}$ and $T_{m_\epsilon^{\rm n}}$ yields Theorem \ref{thm:RFTconv}.
\hfill $\square$


\subsubsection*{Acknowledgments} LO acknowledges support from NSF grant DMS-2406003 and thanks Dallas Albritton for helpful comments on a draft of this paper.


\bibliographystyle{abbrv} 
\bibliography{StokesBib}

\end{document}